\definecolor{FrenchPink}{RGB}{255,118,164}
\definecolor{VividMulberry}{RGB}{192,17,215}
\definecolor{DenimBlue}{RGB}{47,60,190}
\renewcommand{\footnoterule}{%
	\kern -3pt
	\hrule width \textwidth height 1pt
	\kern 2pt
}
\newtheoremstyle{thm}{}{}{\slshape}{}{\bfseries}{}{.5em}{}
\theoremstyle{thm}
\newtheorem{thm}{Theorem}
\newtheorem{prop}{Proposition}
\newtheorem{lem}{Lemma}
\newtheorem{cor}{Corollary}
\newtheorem*{conj*}{Conjecture}
\newtheorem*{defn*}{Definition}
\newtheorem*{thm*}{Theorem}
\newtheorem{thmout}{Theorem} 
\newtheoremstyle{ex}{}{}{}{}{\scshape}{{:}}{.5em}{}
\theoremstyle{ex}
\newtheorem*{ex*}{Example}
\newtheorem*{ack*}{Acknowledgements}
\newtheoremstyle{rem}{}{}{}{}{\scshape}{}{.5em}{}
\newtheorem{rem}{Remark}
\newtheoremstyle{pr}{}{}{}{}{\scshape}{{:}}{.5em}{}
\theoremstyle{pr}
\newtheorem*{pr*}{Proof}
\renewenvironment{proof}[1][\proofname]{{\noindent\scshape #1:}}{\null\hfill\qedsymbol}
\renewcommand{\thesection}{\Roman{section}}
\titleformat{\section}[block]{\large\scshape}{\thesection.}{0.5em}{}
\renewcommand{\thesubsection}{\Roman{section}.\alph{subsection}}
\titleformat{\subsection}[block]{\scshape}{\thesubsection.}{0.5em}{}
\titleformat{\subsubsection}[block]{\bfseries}{}{0.5em}{}
\newcommand{\titre}{Hausdorff limits of submanifolds} 
\newcommand{\titrep}{of symplectic and contact manifolds} 
\newcommand{\titrepp}{Hausdorff limits of submanifolds of symplectic and contact manifolds}
\newcommand{\prenomauteur}{Jean-Philippe} 
\newcommand{\nomauteur}{Chass\'e} 
\newcommand{\pagetitre}[4]{
\noindent\rule{\linewidth}{1pt}
\begin{center}
\begin{tabular}{c}
{\LARGE\textmd{\scshape #1}}\\[2pt]
{\LARGE\textmd{\scshape #2}}\\[15pt]
{\large #3 {\scshape #4}$^*$}
\end{tabular}
\end{center}
\vspace{-0.2cm}
\noindent\rule{\linewidth}{1pt}}
\renewenvironment{abstract}{%
    \if@twocolumn
      \section*{\abstractname}%
    \else\small 
      \begin{center}%
        {\scshape \abstractname\vspace{-0.25cm}}
      \end{center}%
      \quotation
    \fi}
    {\if@twocolumn\else\endquotation\fi}
\renewcommand\tableofcontents{%
	\begin{center}%
	{\scshape \contentsname\vspace{-0.25cm}}
	\end{center}%
	\@mkboth{\MakeUppercase\contentsname}{\MakeUppercase\contentsname}%
	\@starttoc{toc}%
}
\newcommand{\N}{\mathds{N}}
\newcommand{\Z}{\mathds{Z}}
\newcommand{\R}{\mathds{R}}
\newcommand{\D}{\mathds{D}}
\newcommand{\T}{\mathds{T}}
\newcommand{\Id}{\mathds{1}}
\newcommand{\Ker}{\operatorname{Ker}}
\newcommand{\del}{\partial}
\newcommand{\Vol}{\operatorname{Vol}}
\let\phi\varphi
\let\epsilon\varepsilon
\let\emptyset\varnothing
\begin{document}
\thispagestyle{first}

\pagetitre{\titre}{\titrep}{\prenomauteur}{\nomauteur}

\begin{abstract}
\noindent
We study sequences of immersions respecting bounds coming from Riemannian geometry and apply the ensuing results to the study of sequences of submanifolds of symplectic and contact manifolds. This allows us to study the subtle interaction between the Hausdorff metric and the Lagrangian Hofer and spectral metrics. In the process, we get proofs of metric versions of the nearby Lagrangian conjecture and of the Viterbo conjecture on the spectral norm. We also get $C^0$-rigidity results for a vast class of important submanifolds of symplectic and contact manifolds in the presence of Riemannian bounds. Likewise, we get a Lagrangian generalization of results of Hofer~\cite{Hofer1990} and Viterbo~\cite{Viterbo1992} on simultaneous $C^0$ and Hofer/spectral limits~---~even without any such bounds. \par
\end{abstract}
\vspace*{-.15cm}
\noindent\rule{\linewidth}{1pt}
\tableofcontents
\noindent\rule{\linewidth}{1pt}

\section{Introduction}
The main purpose of the present paper is to explore the relation between the classical Hausdorff metric on closed subsets and many of the metrics appearing in symplectic topology, e.g.\ the Lagrangian Hofer metric, the spectral metric and the recent shadow metrics of Biran, Cornea and Shelukhin \cite{BiranCorneaShelukhin2021}. More precisely, we want to explore the topology that these metrics induce on a given collection of Lagrangian submanifolds of a fixed symplectic manifold. In other words, we are interested in studying how convergence of a sequence in one metric affects the behavior of this sequence in the other ones. The reasoning behind the introduction of the Hausdorff metric into the list of considered metrics is twofold:
\begin{enumerate}[label=(\arabic*)]
	\item Contrary to the other metrics, its properties are well-known and easy to derive.
	\item It is defined on any choice of collection of (closed) Lagrangian submanifolds.
\end{enumerate}
The second point is of particular interest to us, as the shadow metrics are for example defined on Lagrangian submanifolds which might not even be of the same homotopy type. \par

This exploration was started in the author's previous work \cite{Chasse2021}. As noted in said work, there is however an obvious problem with introducing the Hausdorff metric: in full generality, there is no relation between the Hausdorff metric and the metrics coming from symplectic topology. Nonetheless, when one only consider sequences respecting certain bounds coming from an auxiliary Riemannian metric, the behavior on each side become intimately related. This is because such bounds essentially stop sequences from Hausdorff-converging to pathological spaces. On the other hand, as we shall see below, our Riemannian bounds implies uniform bounds on sectional curvature and injectivity radius. In view of Cheeger's finiteness theorem~\cite{Cheeger1970}, this thus implies that elements in the sequence can only be of finitely many diffeomorphism type. However, since we are ultimately interested in sequences having \emph{some} Riemannian bound rather than a precise one, it is unclear how this affects the possible limit of such a sequence. \par

In our previous work, we studied sequences of Lagrangian submanifolds converging in some nice metrics coming from symplectic topology, and we proved that they must also converge in the Hausdorff metric. We now turn to the opposite problem: if we have a Hausdorff-converging sequence of Lagrangian submanifolds, what can we say of its behavior in those nice metrics coming from symplectic topology. \par

\begin{thmout} \label{thm-out:section}
	If $\{L_i\}$ is a Riemannianly-bounded sequence of exact Lagrangian submanifolds in $T^*L$ which Hausdorff-converges to the image of the zero section, then $L_i$ is the graph of a 1-form for $i$ large enough. Without the exactness assumption, whether this is true or not depends solely on the first Betti number of the $L_i$'s and of $L$.
\end{thmout}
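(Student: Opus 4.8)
The plan is to localize near the zero section, upgrade the Hausdorff convergence to $C^1$-convergence by exploiting the Riemannian bounds, and then extract the graph-versus-multigraph dichotomy from the cohomology of the covering that appears. Fix a Weinstein neighborhood identifying a neighborhood of the zero section $0_L$ with the disk bundle $D^*L$ of the auxiliary metric, and let $\pi\colon T^*L\to L$ be the projection. Since $\{L_i\}$ Hausdorff-converges to $0_L$, for $i$ large we have $L_i\subset D^*_{\epsilon_i}L$ with $\epsilon_i\to 0$, and every point of $0_L$ lies within $\epsilon_i$ of $L_i$; in particular $\pi(L_i)$ is $\epsilon_i$-dense in $L$.

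Next I would upgrade this to tangential control. By the Riemannian bounds the immersions defining $L_i$ have uniformly bounded second fundamental form and injectivity radius, so by the compactness results established earlier every subsequence has a further subsequence converging in $C^1$ as immersed submanifolds. Its limit has image $0_L$ by the Hausdorff convergence and is an immersion of an $n$-manifold ($n=\dim L$) onto $0_L$; hence the tangent planes $T_pL_i$ converge uniformly to the horizontal distribution $T0_L$. One argues by contradiction: a subsequence whose tangent planes made a definite angle with the horizontal would produce a $C^1$-limit tilted off $0_L$, which is impossible. This tangential step is where the Riemannian bounds are genuinely used, to forbid the vertical oscillations that Hausdorff convergence alone permits, and I expect it to be the \textbf{main obstacle}.

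Once $T_pL_i$ is uniformly close to horizontal, $d\pi_p$ restricts to an isomorphism at every $p\in L_i$, so $\pi|_{L_i}$ is a local diffeomorphism; being proper onto the connected manifold $L$, it is a finite covering $\pi_i\colon L_i\to L$ (taking $L_i$ connected, as in the hypotheses). Thus $L_i$ is a $k_i$-fold multigraph, and it is the graph of a genuine $1$-form exactly when $k_i=1$, that is, when $\pi_i$ is trivial. It remains to decide triviality of $\pi_i$.

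Finally I would read off triviality cohomologically. The restriction $\beta_i:=\lambda|_{L_i}$ of the tautological form is closed by the Lagrangian condition, and unwinding the definitions the covector carried by a point $p\in L_i$ is the $d\pi_p$-pushforward of $\beta_i|_p$. Hence, for a deck transformation $\tau$ of $\pi_i$ (passing to the normal closure of the cover if necessary), two points $p$ and $\tau p$ over the same base carry distinct covectors iff $(\beta_i-\tau^*\beta_i)|_p\neq 0$; embeddedness of $L_i$ therefore forces $\beta_i-\tau^*\beta_i$ to be nowhere-vanishing on the closed manifold $L_i$ for every nontrivial $\tau$. In the exact case $\beta_i=dg_i$, so $\beta_i-\tau^*\beta_i=d(g_i-g_i\circ\tau)$ is exact and must vanish at a critical point of $g_i-g_i\circ\tau$; this contradiction rules out $k_i>1$ and proves that $L_i$ is a graph. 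Without exactness, a nowhere-vanishing closed $1$-form has nonzero cohomology class, so $\tau^*[\beta_i]\neq[\beta_i]$, i.e. the deck action must be nontrivial on $H^1(L_i;\R)$. Since $H^1(L;\R)$ is the invariant part of $H^1(L_i;\R)$ under the deck group, this can happen only when $b_1(L_i)>b_1(L)$. Thus the existence of a nontrivial separated cover — equivalently the failure of $L_i$ to be a graph — is obstructed precisely by, and reduces to, the comparison of $b_1(L_i)$ with $b_1(L)$, as claimed; realizing such covers when $b_1(L_i)>b_1(L)$ shows the dependence is genuine.
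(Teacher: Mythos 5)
Your argument is correct, and its front end coincides with the paper's: Shen's compactness theorem upgrades Hausdorff convergence plus the Riemannian bounds to $C^1$-closeness to the zero section (this is Lemma~\ref{lem:hausdorff} and Proposition~\ref{prop:reparam}), whence $\pi|_{L_i}$ is a proper local diffeomorphism onto $L$ and hence a finite covering. Where you genuinely diverge is in deciding when that covering is trivial. The paper invokes the Abouzaid--Kragh theorem that the projection of a closed exact Lagrangian in $T^*L$ is a homotopy equivalence, hence a $\pi_1$-isomorphism, to kill the covering in the exact case; for the nonexact case it proves Proposition~\ref{prop:lagrangian_coverings} by using $b_1(L_i)=b_1(L)$ to produce a fiberwise symplectic shift making $L_i$ exact and then citing Abouzaid--Kragh again. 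You instead run the classical multisection argument: embeddedness forces the closed $1$-form $\beta_i-\tau^*\beta_i$ to be nowhere vanishing for a nontrivial deck transformation $\tau$; in the exact case this form is exact and so must vanish at a critical point of the difference of primitives, and in the nonexact case it must represent a nonzero class, so $[\beta_i]$ fails to be invariant under the full deck group and the transfer isomorphisms $H^1(L;\R)\cong H^1(\widehat{L}_i;\R)^G\subseteq H^1(\widehat{L}_i;\R)^H\cong H^1(L_i;\R)$ force $b_1(L_i)>b_1(L)$. This buys a completely elementary proof of the dichotomy: Abouzaid--Kragh is only indispensable when one does not already know that $\pi|_{L_i}$ is a covering, and here the Riemannian bounds supply that for free; the price is losing the extra conclusions (e.g.\ simple homotopy equivalence) that the paper's citation carries along. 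The one point requiring the care you only gesture at is that $\pi|_{L_i}$ need not be normal, so its own deck group may be trivial even when the cover is not: one must pass to the Galois closure $\widehat{L}_i\to L$ with group $G$ and subgroup $H$ satisfying $\widehat{L}_i/H=L_i$, observe that for $\tau\in G\setminus H$ the points $q(\hat p)$ and $q(\tau\hat p)$ are distinct in $L_i$ for \emph{every} $\hat p$ so that embeddedness of $L_i$ (not of $\widehat{L}_i$) gives the nowhere-vanishing statement for $q^*\beta_i-\tau^*q^*\beta_i$, and read the ``invariant part'' statement on $H^1(\widehat{L}_i;\R)$ rather than on $H^1(L_i;\R)$. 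With that reading your proof is complete.
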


The idea that such a statement should hold was first shared with us by Lalonde during a discussion regarding previous work. Note that we prove the statement for a slightly weaker hypothesis than convergence in the Hausdorff metric. This gives as a corollary metric versions of the nearby Lagrangian conjecture and of the Viterbo conjecture on the spectral metric, as we shall see below. \par

Furthermore, through a direct computation, this implies that $\{L_i\}$ also converges to $L$ in the Lagrangian Hofer metric. Therefore, in the exact case, convergence in the Hausdorff metric is the same thing as convergence in a nice metric coming from symplectic topology when Riemannian bounds are present. Indeed, all known such metrics are bounded from above by the Lagrangian Hofer metric, and thus convergence in the later metric implies convergence in the other ones. This thus gives a complete characterization of a small-enough neighborhood of an exact Lagrangian submanifold in any of these nice metrics when Riemannian bounds are present: they are just graph deformations of the submanifold. \par

As we shall see below, the exactness condition on $\{L_i\}$ is necessary, as $L_i$ could be a nontrivial covering over $L$ without this condition. Note however that even without the exactness condition, there are still rigidity phenomena which are not present for non-Lagrangian submanifolds. Therefore, the present result is truly in the realm of symplectic topology. \par

We also analyze the type of possible limits that a Hausdorff-converging sequence of Lagrangian submanifolds might have. In particular, this allows us to even better understand limits in those nice metrics coming from symplectic topology when Riemannian bounds are imposed. We show that when Riemannian bounds are present, the limit must be the image of a Lagrangian immersion, although some regularity might be lost in the process. In fact, the techniques that we use there apply to not just Lagrangian submanifolds, but also a large class of important submanifolds of symplectic and contact manifolds. \par

\begin{thmout} \label{thm-out:eliashberg-gromov}
	Hausdorff limits of sequences of (co)isotropic submanifolds of a given symplectic or contact manifold~---~respecting appropriate Riemannian bounds~---~are $C^1$-immersed (co)isotropic submanifolds. Furthermore, in the Lagrangian case, exactness, weak exactness and monotonicity are preserved in the limit when said limit is smoothly embedded.
\end{thmout}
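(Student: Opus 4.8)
The plan is to realise every Hausdorff limit as the image of a limiting $C^1$-immersion and then argue that each of the relevant properties is closed under the resulting convergence. First I would appeal to the compactness theory for Riemannianly-bounded sequences of immersions developed earlier in the paper: since the bounds force uniform control on sectional curvature and injectivity radius, Cheeger's finiteness theorem lets me pass to a subsequence whose members all share a fixed diffeomorphism type $N$, so that I may reparametrise the submanifolds as immersions $\iota_i : N \to M$. The compactness result then extracts a subsequence converging in the $C^1$ topology to a $C^1$-immersion $\iota_\infty : N \to M$. Because $C^0$-convergence of the $\iota_i$ forces Hausdorff convergence of their images, and Hausdorff limits are unique, the image $\iota_\infty(N)$ must coincide with the given Hausdorff limit; this is the immersion whose existence the theorem asserts.

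Next I would check that the (co)isotropic condition survives the limit, which is soft once the parametrised convergence is in hand. In the symplectic case the pullbacks satisfy $\iota_i^*\omega \to \iota_\infty^*\omega$ in $C^0$. For isotropic (in particular Lagrangian) submanifolds each $\iota_i^*\omega$ vanishes, so $\iota_\infty^*\omega = 0$. For coisotropic submanifolds of codimension $k$ I would use that coisotropy is equivalent to $\iota^*\omega$ having rank exactly $\dim N - k$; since the universal bound $\operatorname{rank}(\iota^*\omega)\ge \dim N - k$ holds for every immersion of this dimension, while lower semicontinuity of the rank forbids it from exceeding $\dim N - k$ in the limit, the limiting rank equals $\dim N - k$ and $\iota_\infty$ is again coisotropic. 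The contact case is handled identically after replacing $\omega$ by the pair $(\alpha, d\alpha|_\xi)$: tangency to $\xi$ is the $C^0$-closed condition $\iota^*\alpha = 0$, and the isotropy/coisotropy of the tangent spaces inside $(\xi, d\alpha)$ is the same rank statement for $\iota^* d\alpha$ restricted to the limiting distribution. Note that no embeddedness is needed here.

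For the Lagrangian invariants I would use the embeddedness of $L_\infty := \iota_\infty(N)$ to compare homotopical data. For $i$ large the $C^1$-closeness places $L_i$ in a Weinstein neighborhood of $L_\infty$ as the graph of a closed $1$-form $\sigma_i \to 0$, so the inclusions identify $H^1(L_i)\cong H^1(L_\infty)$ and $\pi_2(M,L_i)\cong\pi_2(M,L_\infty)$. Exactness is then immediate from period convergence: for any $1$-cycle $\gamma$ in $N$ one has $\int_\gamma \iota_i^*\lambda \to \int_\gamma \iota_\infty^*\lambda$, and the left-hand side vanishes, so $[\iota_\infty^*\lambda]=0$. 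Weak exactness follows by representing a class in $\pi_2(M,L_\infty)$ by a disk $u$, pushing its boundary onto $L_i$ through the graph, and capping with a thin annulus in the Weinstein chart whose symplectic area tends to $0$; since the resulting disk has vanishing area for every large $i$ while the fixed number $\int u^*\omega$ differs from it only by that annulus, one gets $\int u^*\omega = 0$. The same area-convergence argument, together with the fact that the Maslov class is a homotopy invariant and hence stabilises along the $C^1$-close sequence, gives $\omega_\infty(A)=\lim_i \tau_i\,\mu(A)$ for every $A\in\pi_2(M,L_\infty)$, so that monotonicity passes to the limit with constant $\tau_\infty=\lim_i\tau_i$.

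The genuinely delicate point---and what I expect to be the main obstacle---is twofold. The deeper half is the compactness theorem itself, i.e.\ extracting the $C^1$-immersed limit from the Riemannian bounds; but this is exactly the immersion theory established before the statement, so here it is used as a black box. The remaining subtlety lies in the monotone case: the argument only yields $\omega_\infty = \tau_\infty\mu$ with $\tau_\infty \ge 0$, and one must rule out $\tau_\infty = 0$, which would degrade monotonicity to mere weak exactness. I would resolve this by showing the constants $\tau_i$ stay bounded away from $0$---either because they are taken uniform along the sequence, or because the Riemannian bounds provide a uniform positive lower bound on the symplectic area of a minimal-Maslov disk, forcing $\tau_\infty>0$.
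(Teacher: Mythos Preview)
Your overall strategy matches the paper's: Shen's compactness yields a $C^{1,\alpha}$-limit immersion, the (co)isotropic condition passes via $C^0$-convergence of pullbacks (with the rank argument for coisotropy), and the Lagrangian invariants are handled by comparing disks on $L_i$ with disks on the limit through thin annuli of vanishing area. For exactness you argue by periods, which is perfectly valid; the paper instead constructs the primitive $h_0$ directly by bounding $\{h_i\}$ in $C^{1,\alpha'}$ and extracting a limit, thereby obtaining a genuine $C^{1,\alpha}$ function with $f_0^*\lambda=dh_0$ rather than merely $[f_0^*\lambda]=0$. For the monotonicity constant the paper simply takes it uniform along the sequence, which is your first proposed resolution; your second (a Riemannian lower bound on minimal disk area) is not used and would need independent justification.

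There is one point where you should be more careful. You pass from ``$L_\infty:=\iota_\infty(N)$ is a smoothly embedded submanifold'' to ``$L_i$ is the graph of a closed $1$-form over $L_\infty$''. This inference fails in general: the torus example in the paper (Remark~\ref{rem:covering}) has each $L_i$ embedded, the Hausdorff limit equal to the smoothly embedded zero section, yet $\iota_\infty$ is a nontrivial double cover and $L_i$ is never a graph. What makes your argument go through is the stronger hypothesis that $\iota_\infty$ itself is an embedding, which is precisely what the paper assumes in its precise statement (Theorem~\ref{thm:eliashberg-gromov}). The paper's own proof of the weakly exact and monotone cases is organised differently and more robustly: it invokes Proposition~\ref{prop:reparam} to get finite coverings $p_i:L_i\to L_0$, then lifts the boundary loop $\gamma$ of a disk $u$ through $p_i$---replacing $\gamma$ by a power $\gamma^k$ when no lift exists---and caps with the thin cylinder. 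This covering argument does not need $L_i$ to be a graph and makes transparent exactly where embeddedness of $\iota_\infty$ (equivalently, triviality of the cover) enters. Under the embedding hypothesis the two arguments coincide, but you should state that hypothesis on $\iota_\infty$, not on its image.
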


We will also explore the possible Hausdorff limits with lighter Riemannian bounds, and even no Riemannian bounds at all. The later exploration is of particular interest to $C^0$-symplectic topology, as it proposes a new possible definition of what a $C^0$-Lagrangian submanifold should be. \par

\subsection{Precise statements}
Throughout the paper, we fix a complete Riemannian manifold $(M,g)$. We will consider immersions $f:N\looparrowright M$, where $N$ is closed and connected. We denote by $B_f$ its associated second fundamental form\index{seconde forme fondamentale} and by $\Vol(f)$\index{volume} the volume of $N$ with respect to the metric $f^*g$. For $k\in\N$, $\Lambda\in [0,\infty)$, and $V\in (0,\infty)$, we consider $\mathscr{I}_k(\Lambda,V)$, the space of such immersions $f:N\looparrowright M$, where $\dim N=k$, 
\begin{align*}
||B_f||\leq\Lambda, \qquad\text{and}\qquad \Vol(f)\leq V.
\end{align*}

In what follows, we will often take $M=T^*L$, where $L$ is a closed connected Riemannian $n$-manifold, and $T^*L$ is equipped with the associated Sasaki metric. Using this notation, we can give a precise statement for Theorem \ref{thm-out:section}. \par

\begin{thm} \label{thm:section}
	Let $\Lambda\geq 0$ and $V>0$. Let $\{f_i:L_i\hookrightarrow T^*L\}\subseteq\mathscr{I}_n(\Lambda,V)$ be a sequence of exact Lagrangian embeddings. Suppose that $f_i(L_i)$ sits in the codisk bundle $D^*_{r_i}L$ of radius $r_i$ and that $\{r_i\}$ tends to $0$. Then, $f_i(L_i)$ is the graph of a 1-form for $i$ large enough. \\
	Furthermore, if the $f_i$'s are not (necessarily) exact, then the above conclusion holds if and only if $L_i$ and $L$ have the same first Betti number for $i$ large.
\end{thm}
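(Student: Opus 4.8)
The plan is to exploit the Riemannian bounds to extract, after passing to a subsequence, a limiting immersion, and then to show this limit must be the zero section itself with enough regularity to force the $L_i$ to be graphs. First I would observe that since $f_i(L_i) \subseteq D^*_{r_i} L$ with $r_i \to 0$, the images Hausdorff-converge to the image of the zero section $0_L$. Combined with the uniform bounds $\|B_{f_i}\| \leq \Lambda$ and $\Vol(f_i) \leq V$, compactness results for immersions with bounded second fundamental form (the type invoked for Theorem~\ref{thm-out:eliashberg-gromov}) yield a subsequence converging in $C^1$ to a Lagrangian immersion $f_\infty$ whose image is $0_L$. The key regularity gain is that the limit is $C^1$-immersed, so the tangent spaces $T_{f_i(x)} f_i(L_i)$ converge to tangent spaces of $0_L$; in particular, for $i$ large, $f_i(L_i)$ is everywhere transverse to the cotangent fibers, i.e.\ the projection $\pi \colon T^*L \to L$ restricts to a local diffeomorphism $L_i \to L$.

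Next I would upgrade this to the graph property. The $C^1$-closeness to the zero section means $\pi \circ f_i \colon L_i \to L$ is a covering map for $i$ large; I must rule out nontrivial covers to conclude it is a diffeomorphism, whence $f_i(L_i)$ is a section, i.e.\ the graph of a $1$-form. In the exact case I would argue that a nontrivial cover cannot carry an exact Lagrangian embedding close to the zero section: the pullback of the tautological (Liouville) $1$-form is exact on $L_i$, and the primitive, being $C^1$-small, forces the $1$-form defining the section to be both closed (Lagrangian condition) and exact (exactness condition), which combined with the deck-transformation structure of a nontrivial cover yields a contradiction via a counting/averaging argument on the covering. Equivalently, one shows the degree of $\pi \circ f_i$ must be $1$: exactness kills the possibility of wrapping multiple times.

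For the general (non-exact) statement, the same extraction produces a covering map $\pi \circ f_i \colon L_i \to L$ of some degree $d$, and the equivalence with the Betti-number condition should come from the fact that a $C^1$-small Lagrangian is a graph precisely when this covering is trivial. If $b_1(L_i) = b_1(L)$, then a degree-$d$ cover with $d > 1$ would violate the standard inequality relating first Betti numbers of a base and its covers (a nontrivial cover of a space strictly increases, or at least cannot preserve, $b_1$ in the relevant range for aspherical-type situations), forcing $d = 1$ and hence the graph conclusion; conversely, if $L_i \to L$ is a nontrivial cover realizing a graph-obstruction, the Betti numbers differ. I would make this precise by identifying $b_1(L_i) = b_1(L)$ with triviality of the cover and noting that the Lagrangian-graph condition is exactly $d = 1$.

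The main obstacle I anticipate is the passage from $C^1$-convergence of the immersions to the conclusion that $\pi \circ f_i$ is a genuine covering map with controlled degree, and especially ruling out nontrivial covers in the exact case: one must convert the analytic smallness (the $1$-form and its primitive being $C^1$-small) into the topological statement that the section is single-valued. This requires care because a priori the $L_i$ are only diffeomorphic to \emph{some} cover of $L$, and the exactness hypothesis must be leveraged precisely — likely through the behavior of the primitive of the Liouville form under deck transformations — to eliminate the multi-sheeted possibilities. Establishing that the limiting immersion is regular enough (genuinely $C^1$, with tangent planes converging) so that transversality to the fibers holds uniformly is the crux on which the entire graph argument rests.
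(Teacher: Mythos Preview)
Your first step---Shen-type compactness giving $C^{1,\alpha'}$-convergence (after a subsequence) to an immersion onto the zero section, hence transversality to fibers and the conclusion that $\pi\circ f_i:L_i\to L$ is a finite covering for $i$ large---is correct and is exactly what the paper does (Lemma~\ref{lem:hausdorff} and Proposition~\ref{prop:reparam}).

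The gap is in how you rule out nontrivial covers. In the exact case, your proposed ``counting/averaging argument on the covering'' using $C^1$-smallness of the primitive does not work: exactness of $f_i^*\lambda$ is a condition on $L_i$, not on $L$, and the primitive lives on the total space of the cover. There is no elementary mechanism by which smallness of this primitive forbids multiple sheets; indeed Remark~\ref{rem:covering} and Remark~\ref{rem:section-vs-covering} exhibit (non-exact) Lagrangian sequences in $\mathscr{I}_n(\Lambda,V)$ with $r_i\to 0$ whose limit is a nontrivial cover, so analytic smallness alone cannot be the obstruction. The paper instead invokes the theorem of Abouzaid and Kragh: for \emph{any} closed exact Lagrangian $L'\subset T^*L$, the projection $L'\to L$ is a homotopy equivalence, hence an isomorphism on $\pi_1$, so the covering $\pi\circ f_i$ is forced to be a diffeomorphism. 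This is the substantive input you are missing.

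Your non-exact argument contains an error: it is \emph{not} true that a nontrivial finite cover must increase $b_1$---the degree-$d$ self-cover $S^1\to S^1$ has $b_1=1$ on both sides. The paper's argument (Proposition~\ref{prop:lagrangian_coverings}) uses the Lagrangian structure essentially: since finite covers always induce an injection on $H^1(\,\cdot\,;\R)$, the hypothesis $b_1(L_i)=b_1(L)$ makes $(\pi\circ f_i)^*$ an isomorphism, so the class $[f_i^*\lambda]\in H^1(L_i;\R)$ descends to some $[\sigma]\in H^1(L;\R)$. Translating by the symplectic (non-Hamiltonian) isotopy generated by $-\pi^*\sigma$ then makes $f_i$ exact, and one applies Abouzaid--Kragh again. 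Thus both halves of the theorem ultimately rest on the nearby-Lagrangian machinery, which your outline does not invoke.
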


The main tool in the proof of this result is a theorem of Shen \cite{Shen1995} proving some sort of precompacity result for $\mathscr{I}_k(\Lambda,V)$. In fact, Shen's result proves that $\mathscr{I}_k(\Lambda,V)$ can be naturally compactified using $C^{1,\alpha}$-immersions, for any $\alpha\in (0,1)$. Together with the fact that the projection $f_i(L_i)\to L$ must be a homotopy equivalence \cite{AbouzaidKragh2018}, this gives Theorem \ref{thm:section}. \par

The main application of this result is in proving metric versions of the nearby Lagrangian conjecture and of the Viterbo conjecture on the spectral norm. \par

\begin{cor} \label{cor:conjectures}
	Let $L$ be a closed connected Riemannian $n$-manifold. There exist constants $A=A(L)>0$ and $R=R(L,\Lambda,V)>0$ with the following property. Let $L'$ be an exact Lagrangian submanifold of the codisk bundle 
	\begin{align*}
	D^*_R L:=\{(x,v)\in T^*L\ |\ |v|\leq R\}
	\end{align*}
	such that the inclusion $L'\hookrightarrow D^*_R L\hookrightarrow T^*L$ is in $\mathscr{I}_n(\Lambda,V)$. Then, 
	\begin{enumerate}[label=(\roman*)]
		\item $L'$ is Hamiltonian isotopic to the zero section; \\
		\item the spectral norm respects the inequality
		\begin{align*}
		\gamma(L,L')\leq A.
		\end{align*}
	\end{enumerate}
\end{cor}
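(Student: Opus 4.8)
The plan is to derive Corollary~\ref{cor:conjectures} from Theorem~\ref{thm:section} by a standard contradiction/compactness argument, together with the classical fact that a Lagrangian which is the graph of a closed $1$-form in $T^*L$ can be deformed onto the zero section, and that the spectral norm of such a graph is controlled by the symplectic area swept during the deformation.

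First I would fix $L$ and the bounds $\Lambda, V$. The key step is to produce the constant $R=R(L,\Lambda,V)$ via a contradiction argument built on Theorem~\ref{thm:section}. Suppose no such $R$ exists; then for each $i\in\N$ there is an exact Lagrangian $L_i'\subseteq D^*_{1/i}L$ whose inclusion lies in $\mathscr{I}_n(\Lambda,V)$ but which is \emph{not} the graph of a $1$-form. Setting $r_i=1/i\to 0$, this is precisely a sequence of exact Lagrangian embeddings in $\mathscr{I}_n(\Lambda,V)$ shrinking into ever-smaller codisk bundles, i.e.\ Hausdorff-converging to the zero section in the sense made precise in Theorem~\ref{thm:section}. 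That theorem then forces $L_i'$ to be a graph for $i$ large, contradicting the choice of the $L_i'$. Hence some finite $R>0$ works uniformly, and for every admissible $L'$ in $D^*_R L$ we may assume $L'=\mathrm{graph}(\sigma)$ for a $1$-form $\sigma$ on $L$.

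Next I would upgrade ``graph'' to the conclusions (i) and (ii). Since $L'$ is exact Lagrangian and equals $\mathrm{graph}(\sigma)$, the form $\sigma$ is closed, and exactness of the embedding forces $\sigma$ to be exact, say $\sigma=dh$. For part (i), the linear isotopy $\sigma_t=t\,dh$ stays inside $D^*_R L$ for all $t\in[0,1]$ (as $\|\sigma_t\|\leq\|\sigma\|\leq R$), and each $\mathrm{graph}(t\,dh)$ is an exact Lagrangian; the associated family is generated by a Hamiltonian isotopy of $T^*L$, yielding a Hamiltonian isotopy from $L'$ to the zero section. This proves the metric nearby Lagrangian statement. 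For part (ii), I would use that the spectral norm $\gamma(L,L')$ of a graph $\mathrm{graph}(dh)$ is bounded by the total oscillation of a generating function / of $h$, which in turn is controlled by $\|\sigma\|_{C^0}\leq R$ times a geometric constant depending on $L$; so $\gamma(L,L')\leq A(L)$ after possibly enlarging the constant. Here one must be mindful that the bound $A$ should depend only on $L$ and not on $L'$, which is where the uniform control $\|\sigma\|\leq R$ is essential.

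The main obstacle I expect is \emph{not} the existence of $R$ — that is a clean compactness consequence of Theorem~\ref{thm:section} — but rather pinning down the exact form of the inequality in (ii) with the stated dependence $A=A(L)$. The subtlety is that the spectral norm is an a priori global symplectic invariant, and one must check that for a graph confined to $D^*_R L$ the relevant spectral invariants (the min-max critical values of the generating function, or equivalently $c_+(L,L')-c_-(L,L')$) are bounded purely in terms of the $C^0$-size of $\sigma$ and the geometry of $L$, independently of the higher-order behavior of $\sigma$. I would address this by invoking the standard identification of the spectral norm of a graph over the zero section with the oscillation of its generating function $h$ (Viterbo's generating-function theory in $T^*L$), and estimating $\mathrm{osc}(h)$ by $R\cdot\mathrm{diam}(L)$ or an analogous geometric quantity; once $R=R(L,\Lambda,V)$ is fixed, this makes $A$ depend only on $L$ as claimed.
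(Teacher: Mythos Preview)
Your contradiction argument for the existence of $R$ and your treatment of (i) are essentially the same as the paper's; the paper also notes explicitly that one must cut off the Hamiltonian $\pi^*h$ by a bump function to get a compactly supported isotopy, which you glossed over but is routine.

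For (ii) the approaches diverge. The paper does \emph{not} go through the oscillation of $h$; instead it observes that since $L'$ is a graph, the Floer complex $CF(L',T_x^*L)$ has a single generator for every $x\in L$, hence boundary depth zero, and then invokes a result of Biran--Cornea which produces a constant $A$ attached to the fixed domain $D_1^*L$ (one simply takes $R\leq 1$ so that $L'\subseteq D_1^*L$). That constant manifestly depends only on $L$. Your route via $\gamma(L,\mathrm{graph}\,dh)\leq\mathrm{osc}(h)\leq \|dh\|_{C^0}\cdot\mathrm{diam}(L)$ is more elementary and perfectly valid, but your final sentence contains a slip: from $\mathrm{osc}(h)\leq R\cdot\mathrm{diam}(L)$ with $R=R(L,\Lambda,V)$ you do \emph{not} get $A=A(L)$, you get $A=A(L,\Lambda,V)$. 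The fix is trivial---just impose $R\leq 1$ from the outset, so $\|dh\|\leq 1$ and $A=\mathrm{diam}(L)$---but as written the dependence claim is wrong. With that correction your argument for (ii) is actually cleaner than the paper's, since it avoids the Biran--Cornea machinery entirely.
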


\begin{rem} \label{rem:conjectures_history}
	We briefly review the advances made in proving both conjectures in full generality, i.e.\ without any Riemannian bounds.
	\begin{enumerate}[label=(\arabic*)]
		\item The nearby Lagrangian conjecture, i.e.\ (i) in Corollary \ref{cor:conjectures}, is only known when $L$ is $S^1$ (folklore), $S^2$ (follows from work of Hind \cite{Hind2004}), $\R P^2$ (follows from work of Hind, Pinsonnault, and Wu \cite{HindPinsonnaultWu2013}), and $\T^2$ (proved by Dimitroglou Rizell, Goodman and Ivrii \cite{DimitroglouGoodmanIvrii2016}). However, great advancement has been made in proving the conjecture in full generality. As noted before, it is known that the canonical projection $\pi:T^*L\to L$ must induce a simple homotopy equivalence $L'\to L$ \cite{AbouzaidKragh2018}. Likewise, $L'$ and $L$ must be isomorphic objects in the Fukaya category of $T^*L$ when $L$ is spin \cite{FukayaSeidelSmith2008i, FukayaSeidelSmith2008ii, Nadler2009}. \par
		
		\item The Viterbo conjecture on the spectral norm, i.e.\ (ii) in Corollary \ref{cor:conjectures}, has recently been proven for a large class of nice manifolds by Shelukhin \cite{Shelukhin2018, Shelukhin2019}, and for another large class of manifolds by Viterbo~ \cite{Viterbo2022}, and Guillermou and Vichery~\cite{GuillermouVichery2022}, indepently. \par
	\end{enumerate}
\end{rem}

From the statement of Theorem~\ref{thm:section} on nonexact Lagrangian submanifolds, we also get the following surprising dichotomy for sequences of embedded Lagrangian submanifolds. \par

\begin{cor} \label{cor:sequence_betti}
	Let $M$ be a symplectic manifold, and let $\{f_i:L_i\hookrightarrow M\}\subseteq\mathscr{I}_n(\Lambda,V)$ be a sequence of Lagrangian embeddings such that $\{f_i(L_i)\}$ Hausdorff-converges to a Lagrangian submanifold $L$. Then, passing to a subsequence, exactly one of the following is true:
	\begin{enumerate}[label=(\alph*)]
		\item $f_i(L_i)$ is Lagrangian isotopic to $L$;
		\item $b_1(L_i)>b_1(L)$,
	\end{enumerate}
	where $b_1$ denotes the first Betti number.
\end{cor}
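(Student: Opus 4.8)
The plan is to reduce to the situation of Theorem~\ref{thm:section} by means of a Weinstein neighborhood, and then to feed in the covering-space inequality for Betti numbers in order to promote the resulting dichotomy to its strict form.

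First I would invoke the Weinstein Lagrangian neighborhood theorem to fix a symplectomorphism $\Phi$ from a neighborhood $U$ of $L$ in $M$ onto a neighborhood of the zero section in $T^*L$, carrying $L$ to the zero section. Since $\{f_i(L_i)\}$ Hausdorff-converges to $L$, for $i$ large enough $f_i(L_i)\subseteq U$, and $\Phi\circ f_i$ is a Lagrangian embedding whose image sits inside $D^*_{r_i}L$ with $r_i\to 0$. The one genuinely technical point~---~and what I expect to be the main obstacle~---~is that $\mathscr{I}_n(\Lambda,V)$ is defined with respect to the ambient metric, so I must check that the hypotheses $||B_{\Phi\circ f_i}||\leq\Lambda'$ and $\Vol(\Phi\circ f_i)\leq V'$ hold for suitable constants. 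As $\Phi$ is a fixed diffeomorphism and all the $f_i(L_i)$ eventually lie in a fixed relatively compact piece of $U$, the Sasaki metric and the pushforward $\Phi_*g$ are uniformly comparable there, with uniformly bounded first and second derivatives; this distorts the second fundamental form and the volume by bounded factors, so $\{\Phi\circ f_i\}\subseteq\mathscr{I}_n(\Lambda',V')$ for some $\Lambda',V'$. I would therefore replace $M$ by $T^*L$ and $L$ by the zero section from here on.

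Next I would apply Shen's precompactness, exactly as in the proof of Theorem~\ref{thm:section}: after passing to a subsequence, the domains $L_i$ are all diffeomorphic to a fixed closed manifold $L_\infty$, and $f_i$ converges in $C^{1,\alpha}$ to a $C^{1,\alpha}$-immersion $f_\infty:L_\infty\looparrowright T^*L$ whose image is the Hausdorff limit, namely the zero section. Composing with the projection $\pi:T^*L\to L$ then shows that $\pi\circ f_\infty:L_\infty\to L$ is a $C^1$ local diffeomorphism between closed connected manifolds, hence a finite covering of some degree $d$. The transfer homomorphism for this covering makes $\pi^*:H^*(L;\Q)\to H^*(L_\infty;\Q)$ injective, so that $b_1(L_i)=b_1(L_\infty)\geq b_1(L)$ all along the subsequence.

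Finally I would split according to whether this common value equals $b_1(L)$. If $b_1(L_i)=b_1(L)$, then Theorem~\ref{thm:section} applies and $f_i(L_i)$ is, for $i$ large, the graph of a closed $1$-form $\alpha_i$; the family $t\mapsto\operatorname{graph}(t\alpha_i)$, $t\in[0,1]$, is a Lagrangian isotopy from the zero section to $f_i(L_i)$, which transported back through $\Phi^{-1}$ yields alternative (a). Otherwise the inequality above is strict, $b_1(L_i)>b_1(L)$, which is alternative (b). The two are mutually exclusive, since (a) forces $L_i\cong L$ and hence $b_1(L_i)=b_1(L)$, contradicting (b); this gives the claimed dichotomy. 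I expect the covering-space inequality $b_1(L_i)\geq b_1(L)$ to be the conceptual crux, as it is precisely what rules out the a priori possibility $b_1(L_i)<b_1(L)$ and thereby upgrades the bare inequality $b_1(L_i)\neq b_1(L)$ coming from Theorem~\ref{thm:section} to the strict statement (b).
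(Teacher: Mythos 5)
Your proof is correct and follows essentially the same route as the paper's: reduce to a Weinstein neighborhood of $L$, use the finite-covering structure of the projection (Proposition~\ref{prop:reparam}, via Shen's theorem) together with injectivity of the pullback on $H^1$ to get $b_1(L_i)\geq b_1(L)$, then split into the equality and strict-inequality cases, handling the former with the second part of Theorem~\ref{thm:section} and the linear isotopy $t\mapsto\operatorname{graph}(t\sigma_i)$. The only difference is that you make explicit the transfer of the bounds in $\mathscr{I}_n(\Lambda,V)$ through the Weinstein chart, a point the paper leaves implicit.
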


In fact, as we shall see below in Theorem~\ref{thm:eliashberg-gromov}, the hypothesis that $L$ be Lagrangian may be dropped, as it is automatically satisfied. \par

Another consequence of Theorem \ref{thm:section} is in completing the author's previous work on the equivalence of the topologies induced by various metrics on an appropriate space of Lagrangian submanifolds. \par

\begin{cor} \label{cor:hausdorff_to_hofer}
	Let $\lambda$ be a Liouville form of an\ exact symplectic manifold $M$, and let $\{L_i\subseteq M\}$ be a sequence of $\lambda$-exact Lagrangian submanifolds such that the inclusions are in some fixed $\mathscr{I}_n(\Lambda,V)$. If $\{L_i\}$ Hausdorff-converges in $M$ to a (smooth) $\lambda$-exact Lagrangian submanifold $L_0$, then it also converges to $L_0$ in the Lagrangian Hofer metric $d_H$. In particular, it also converges to $L_0$ in the spectral norm and in any shadow metric.
\end{cor}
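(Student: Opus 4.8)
The plan is to reduce the statement to Theorem~\ref{thm:section} by working in a Weinstein neighborhood of the limit $L_0$, and then to extract the Hofer estimate from the resulting graph description via the direct computation alluded to above. First I would invoke the Lagrangian tubular neighborhood theorem to fix a symplectomorphism $\Phi$ from a neighborhood of the zero section in $T^*L_0$ onto a neighborhood $U$ of $L_0$ in $M$, carrying the zero section to $L_0$. Since $\{L_i\}$ Hausdorff-converges to $L_0$, for $i$ large every $L_i$ lies in $U$, and the one-sided distance $\sup_{x\in L_i} d(x,L_0)\to 0$ forces $\Phi^{-1}(L_i)\subseteq D^*_{r_i}L_0$ with $r_i\to 0$.

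Two transfers are then needed to invoke the theorem. For exactness, since $\Phi$ is a symplectomorphism, $\beta:=\Phi^*\lambda-\lambda_{\mathrm{can}}$ is a closed $1$-form on the neighborhood, which deformation-retracts onto the zero section; as $\lambda_{\mathrm{can}}$ vanishes there and the restriction of $\beta$ to the zero section is identified with $\lambda|_{L_0}$, which is exact because $L_0$ is $\lambda$-exact, we get $[\beta]=0$ and hence $\beta=dG$. Therefore $\lambda$-exactness of $L_i$ in $M$ is equivalent to $\lambda_{\mathrm{can}}$-exactness of $\Phi^{-1}(L_i)$ in $T^*L_0$. For the Riemannian bounds, on the fixed compact set $\overline U$ the pullback metric $\Phi^*g$ and the Sasaki metric are comparable with comparable connections, so the hypotheses $\|B_{f_i}\|\le\Lambda$ and $\Vol(f_i)\le V$ pass to new constants $\Lambda',V'$ for the immersions $\Phi^{-1}\circ f_i$ into $T^*L_0$ with its Sasaki metric.

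With these reductions in place, $\{\Phi^{-1}(L_i)\}$ is a Riemannianly-bounded sequence of exact Lagrangian embeddings contained in $D^*_{r_i}L_0$ with $r_i\to 0$, so Theorem~\ref{thm:section} applies: for $i$ large, $\Phi^{-1}(L_i)$ is the graph of a $1$-form $\sigma_i$, which is closed because it is Lagrangian and exact because $L_i$ is exact, say $\sigma_i=df_i$. The containment in $D^*_{r_i}L_0$ gives $\|\sigma_i\|_{C^0}\le r_i\to 0$, and since $L_0$ is compact and connected, $\max f_i-\min f_i\le \mathrm{diam}(L_0)\,\|df_i\|_{C^0}\to 0$.

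Finally I would carry out the direct computation. Normalizing $\min f_i=0$ and cutting off the Hamiltonian $\pi^*f_i$ by a fixed bump function equal to $1$ on $D^*_{r_1}L_0$ produces a compactly supported Hamiltonian whose time-$1$ flow sends the zero section to the graph of $df_i$~---~its trajectories never leave $D^*_{r_i}L_0$, where the cutoff is inactive~---~and whose Hofer norm is at most $\max f_i-\min f_i$. Pushing this isotopy forward by the symplectomorphism $\Phi$, which preserves the Hofer norm, shows $d_H(L_0,L_i)\to 0$; the final clause then follows since the spectral norm and every shadow metric are dominated by $d_H$. I expect the main obstacle to be the transfer of the Riemannian bounds across the two metrics on $\overline U$: comparing the second fundamental forms requires controlling first derivatives of the metrics rather than a mere bi-Lipschitz comparison, so one must either establish this comparison uniformly or, alternatively, re-run Shen's precompactness argument directly for the ambient metric restricted to $\overline U$.
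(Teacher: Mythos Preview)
Your proposal is correct and follows essentially the same route as the paper: pass to a Weinstein neighborhood, apply Theorem~\ref{thm:section} to realize each $L_i$ as the graph of an exact $1$-form $dh_i$, bound $\max h_i-\min h_i$ by $\mathrm{Diam}(L_0)\,\|dh_i\|$ via a geodesic integral, and observe that $\|dh_i\|\to 0$ because $L_i\subseteq D^*_{r_i}L_0$; the cutoff Hamiltonian $\beta\,\pi^*h_i$ then yields the Hofer bound. The paper simply writes ``we may assume without loss of generality that $L_i$ is an exact Lagrangian submanifold in $T^*L_0$'' and does not spell out the transfer of exactness or of the Riemannian bounds, whereas you do both carefully; your justification that $\Phi^*\lambda-\lambda_{\mathrm{can}}$ is exact is correct, and your remark that Shen's theorem can be applied directly with the pulled-back ambient metric (so that no comparison of second fundamental forms across metrics is really needed) is the cleanest way to dispose of the concern you flag at the end. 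One cosmetic point: you reuse the symbol $f_i$ for both the embeddings and the primitive functions on $L_0$; renaming the latter would avoid confusion.
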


In fact, as we shall see below, the exactness requirement on the limit is superfluous, as the Hausdorff limit of a sequence of $\lambda$-exact Lagrangian immersions is automatically itself exact for the same Liouville form. \par

In fact, Shen's result allows us to explore sequences of other important type of submanifolds in symplectic and contact topology, not just Lagrangian ones. Indeed, since the result gives some sort of compacity in $C^{1,\alpha}$-topology, $0<\alpha<1$, many of these properties are preserved in the limit. This leads to rigidity results similar in flavor to the celebrated theorem of Gromov and Eliashberg on the $C^0$-closedness of the group of symplectomorphisms in the group of diffeomorphisms \cite{Gromov1986, Eliashberg1987}, but with the additional requirement of there being Riemannian bounds. \par

\begin{thm} \label{thm:eliashberg-gromov}
	Let $\{f_i:L_i\looparrowright M\}\subseteq \mathscr{I}_k(\Lambda,V)$ be a sequence of Lagrangian (resp.\ isotropic, coisotropic, Legendrian, contact isotropic, or contact coisotropic) immersions. Suppose that $\{f_i(L_i)\}$ Hausdorff-converges to a closed subset $N$. Then, $N$ is the image of a Lagrangian (resp. isotropic, coisotropic, Legendrian, contact isotropic, or contact coisotropic) $C^{1,\alpha}$-immersion $f_0:L_0\looparrowright M$, where $L_0$ is closed and connected (and of dimension $k$). \par
	
	Furthermore, if the $f_i$'s are exact for some Liouville form $\lambda$, then there is a $C^{1,\alpha}$-function $h_0:L_0\to\R$ such that $f^*\lambda=dh_0$. \par 
	
	Likewise, if the $f_i$'s are weakly exact (resp.\ monotone with monotonicity constant $\rho>0$), and $f_0$ is an embedding, then $f_0$ is weakly exact (resp.\ monotone with monotonicity constant $\rho$).
\end{thm}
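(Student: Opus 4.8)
The plan is to reduce every assertion to Shen's precompactness theorem, combined with the elementary fact that each of the relevant geometric conditions is closed under $C^1$-convergence. First I would apply Shen's result to the given sequence. Since $\{f_i\}\subseteq\mathscr{I}_k(\Lambda,V)$, Cheeger's finiteness theorem guarantees that, after passing to a subsequence, the domains $L_i$ are all diffeomorphic to a single closed connected $k$-manifold $L_0$; Shen's theorem then provides reparametrizations so that, writing the resulting maps again as $f_i:L_0\looparrowright M$, we have $f_i\to f_0$ in $C^{1,\alpha}$ for a $C^{1,\alpha}$-immersion $f_0:L_0\looparrowright M$. Because $C^0$-convergence of immersions of a fixed compact manifold forces Hausdorff-convergence of the images, the subsequence $\{f_i(L_0)\}$ Hausdorff-converges both to $N$ and to $f_0(L_0)$, so uniqueness of Hausdorff limits gives $N=f_0(L_0)$. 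This already exhibits $N$ as the image of a $C^{1,\alpha}$-immersion of a closed connected $k$-manifold, and the remaining work is to transport the geometric structure to $f_0$.

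For the first assertion I would argue that being Lagrangian, isotropic, or coisotropic is a closed condition on the Grassmannian bundle of $k$-planes in $TM$, and then use that $f_i\to f_0$ in $C^1$ makes the tangent planes $\Ima(d_xf_i)$ converge to $\Ima(d_xf_0)$ in this bundle. The isotropic and Lagrangian conditions are transparent, since $(f_i^*\omega)_x(u,v)=\omega_{f_i(x)}(d_xf_i\,u,d_xf_i\,v)$ converges uniformly to $(f_0^*\omega)_x$, so $f_i^*\omega\equiv 0$ passes to $f_0^*\omega\equiv 0$ in the limit. The coisotropic case needs a little care, because coisotropy of a plane $W$ is equivalent to $\omega|_W$ having \emph{maximal} rank, which is an open rather than closed condition; the point is to phrase it instead as the containment $W^{\omega}\subseteq W$. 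As $W\mapsto W^{\omega}$ is continuous (indeed smooth) on the Grassmannian and the incidence relation $\{(U,W):U\subseteq W\}$ is closed, this containment survives the $C^1$-limit. The contact conditions are handled identically, reading the defining relations off the contact form $\alpha$ and the conformal symplectic structure $d\alpha|_{\xi}$ on $\xi=\ker\alpha$, both of which vary continuously with $f_i$.

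For the exactness statement I would pull everything back to the fixed domain $L_0$. Then $f_i^*\lambda=dh_i$ for functions $h_i:L_0\to\R$, and $f_i^*\lambda\to f_0^*\lambda$ in $C^{0,\alpha}$. Normalizing the primitives by fixing their value at a basepoint, the uniform convergence of the differentials $dh_i$ on the compact manifold $L_0$ yields $C^1$-convergence $h_i\to h_0$ with $dh_0=f_0^*\lambda$; since the right-hand side is $C^{0,\alpha}$, the primitive $h_0$ is $C^{1,\alpha}$, as claimed.

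The last statement is where I expect the real difficulty, and it is the reason the embedding hypothesis enters. Assuming $f_0$ is an embedding, for large $i$ the maps $f_i$ are themselves embeddings and $f_i(L_0)$ lies in a Weinstein neighborhood of $f_0(L_0)$ as a $C^1$-small Lagrangian graph; this furnishes a canonical identification $\pi_2(M,f_i(L_0))\cong\pi_2(M,f_0(L_0))$ under which representing disks may be chosen to converge in $C^0$. Along this identification the symplectic-area homomorphism $I_\omega$ converges, areas being continuous under the $C^1$-small boundary perturbations, while the Maslov homomorphism $I_\mu$ is integer-valued and hence constant. Weak exactness, the vanishing $I_\omega\equiv 0$, then passes directly to the limit, and monotonicity, the relation $I_\omega=\rho\,I_\mu$, passes to the limit because $I_\mu$ is unchanged and $I_\omega$ converges. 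The main obstacle is precisely this transport of relative-homotopy data across the degeneration: we only control the maps in $C^{1,\alpha}$, whereas the conclusion is topological, so one must leverage the embeddedness of $f_0$ to pin down both the identification of $\pi_2(M,-)$ and the constancy of the Maslov class~---~without it, $f_0(L_0)$ is singular and neither the tubular-neighborhood identification nor the area-continuity argument is available.
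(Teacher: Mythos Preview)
Your argument is correct and, for the isotropic, coisotropic, and exact statements, it is essentially identical to the paper's: the paper breaks these off as separate lemmata and proves them exactly as you do, with the one cosmetic difference that for coisotropy the paper argues via lower semicontinuity of the rank of $f_i^*\omega$ (together with the upper bound $\dim\ker f_0^*\omega\leq n-k$ coming from nondegeneracy of $\omega$) rather than via closedness of the incidence relation on the Grassmannian. These are equivalent.

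The genuine difference is in the weakly exact/monotone statement. You exploit the embedding hypothesis on $f_0$ directly: $C^1$-convergence forces $f_i(L_0)$ to be a section over $f_0(L_0)$ in a tubular neighborhood, which gives an isotopy and hence an identification of $\pi_2(M,-)$ along which $I_\omega$ is continuous and $I_\mu$ is locally constant. The paper instead routes through its earlier covering result (Proposition~\ref{prop:reparam}): it produces finite coverings $p_i:L_i\to L_0$, and for each disk $u$ with boundary $\gamma\subset L_0$ it lifts $\gamma$ (or a finite power $\gamma^k$ if $\gamma$ does not lift) to $L_i$, glues an explicit small-area cylinder $v_i$, and reads off $\omega(u)=0$ or $\omega(u)=\rho\mu(u)$ from the corresponding identity on $v_i\#u$ together with $\omega(v_i)\to 0$ and $\mu(v_i\#u)=\mu(u)$. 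Your route is shorter because, once $f_0$ is assumed embedded, the covering $p$ from Proposition~\ref{prop:reparam} is automatically a diffeomorphism and the ``$\gamma$ does not lift'' case is vacuous; the paper's argument, on the other hand, is written so as to slot into the covering framework of Section~\ref{sec:sequences} and would adapt more readily to situations where the limit is only immersed.
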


Apart from the statement on weakly exact and monotone Lagrangian submanifolds, each element of Theorem~\ref{thm-out:eliashberg-gromov} has an equivalent statement for noncompact submanifolds~---~of possibly infinite volume. The proof relies a pointed version of Shen's result and a small technical trick. For ease of presentation, we do not give here the  the precise details of the equivalent statements and delay their presentation to Subsection~\ref{subsec:rigidity_without_volume_bounds}.

\begin{rem} \label{rem:gen_eliash-gromov}
	This theorem is related to previously-known $C^0$-rigidity results.
	\begin{enumerate}[label=(\arabic*)]
		\item This result can be viewed as having some similarity to Laudenbach and Sikorav's result \cite{LaudenbachSikorav1994} on the $C^0$-rigidity of Lagrangian embeddings (under some technical assumptions). This result was recently upgraded to general Lagrangian submanifolds and to a class of nice Legendrian submanifolds by Nakamura \cite{Nakamura2020}. The great improvement here is that we allow embeddings~---~and in fact, even immersions~---~of varying domains; the price to pay are bounds coming from Riemannian geometry. We will however see below that we can partially get rid on the volume bound, and we will show some $C^0$-rigidity result without any type of Riemannian bounds.
		
		\item Likewise, note that when $f_i$ is the inclusion of the graph of some symplectomorphism $\psi_i:M\xrightarrow{\sim} M$, i.e.\ when $f_i(M)$ is a Lagrangian graph, then Hausdorff-convergence of $\{f_i(M)=\operatorname{graph}\psi_i\}$ to $N=\operatorname{graph}\psi$ is equivalent to $C^0$-convergence of $\{\psi_i\}$ to $\psi$. Furthermore, uniform $C^2$-bounds on $\{\psi_i\}$ implies the existence of a $C^1$-converging subsequence by the Arzela-Ascoli theorem, and thus the limit $\psi$ is a $C^1$-symplectomorphism. However, such bounds also implies that $\{f_i\}$ is in $\mathscr{I}_n(\Lambda,V)$ for some $\Lambda\geq 0$ and $V>0$. Therefore, Theorem \ref{thm:eliashberg-gromov} can also be seen as a generalization of that simple fact.
	\end{enumerate}
\end{rem}

We end this introduction by showcasing the previously--mentioned rigidity result without Riemannian bounds generalizing results of Hofer~\cite{Hofer1990} and Viterbo~\cite{Viterbo1992} on simultaneous $C^0$ and Hofer/spectral limits.

\begin{thm} \label{thm:equiv_limits}
	Let $\{L_i\}$ be a sequence of closed connected Lagrangian submanifolds in a $2n$-dimensional symplectic manifold $M$. Let $\hat{d}^{\mathscr{F},\mathscr{F'}}$ be a Chekanov-type metric on some collection of Lagrangian submanifolds $\mathscr{L}^\star(M)$ (c.f.\ \cite{Chasse2021}), and suppose that $L_i\in\mathscr{L}^\star(M)$ for all $i$. Furthermore, suppose the following:
	\begin{enumerate}[label=(\arabic*)]
		\item The sequence Hausdorff-converges to a closed topological submanifold $N$ of dimension at most $n$.
		\item The sequence converges to some $L_0\in \mathscr{L}^\star(M)$ in $\hat{d}^{\mathscr{F},\mathscr{F'}}$.
	\end{enumerate}
	Then, $N=L_0$.
\end{thm}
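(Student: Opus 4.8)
The plan is to transport Hofer's energy--capacity argument \cite{Hofer1990} into the Lagrangian setting, reducing the statement to the two set-theoretic inclusions $N\subseteq L_0$ and $L_0\subseteq N$. The single analytic ingredient I would isolate first is a quantitative \emph{separation} lower bound for the Chekanov-type metric: there is a function $c(\rho)>0$ such that whenever $L,L'\in\mathscr{L}^\star(M)$, and there is a Darboux chart around a point $p$ in which $L$ meets the symplectic ball $B(p,\rho)$ while $L'$ is disjoint from the concentric ball $B(p,2\rho)$, one has $\hat{d}^{\mathscr{F},\mathscr{F'}}(L,L')\geq c(\rho)$. This is the Floer-theoretic shadow of the fact that the displacement energy of a symplectic ball is positive, and it should follow from the domination properties of Chekanov-type metrics established in \cite{Chasse2021} together with the energy--capacity inequality. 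Crucially, it requires no Riemannian bounds, which is exactly what allows us to dispense with them here.

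Granting this, I would first prove $N\subseteq L_0$. Suppose $p\in N$ but $p\notin L_0$. Since $L_0$ is a closed submanifold, it misses some ball $B(p,2\rho)$, and shrinking $\rho$ we may take this to be a symplectic ball inside a Darboux chart. As $\{L_i\}$ Hausdorff-converges to $N\ni p$, we have $d(p,L_i)\to 0$, so $L_i\cap B(p,\rho)\neq\emptyset$ for $i$ large. The separation lemma then gives $\hat{d}^{\mathscr{F},\mathscr{F'}}(L_i,L_0)\geq c(\rho)>0$ for all large $i$, contradicting hypothesis~(2). Hence every point of $N$ lies in $L_0$.

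For the reverse inclusion $L_0\subseteq N$ I would run the same argument with the roles of the obstructing set exchanged. Suppose $p\in L_0$ but $p\notin N$. Since $N$ is closed, $B(p,2\rho)\cap N=\emptyset$ for some small symplectic ball, and Hausdorff-convergence of $\{L_i\}$ to $N$ forces each $L_i$ to lie in an arbitrarily small neighborhood of $N$, so $L_i\cap B(p,2\rho)=\emptyset$ for $i$ large; meanwhile $p\in L_0$ means $L_0$ meets $B(p,\rho)$. Applying the separation lemma to the pair $(L_0,L_i)$ again yields $\hat{d}^{\mathscr{F},\mathscr{F'}}(L_i,L_0)\geq c(\rho)>0$, contradicting hypothesis~(2). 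Thus $L_0\subseteq N$, and combining the two inclusions gives $N=L_0$ as sets; since $L_0$ is a smooth $n$-dimensional submanifold, this forces $N$, a priori only a topological submanifold of dimension $\leq n$, to coincide with $L_0$. Here the dimension hypothesis guarantees $N$ is nowhere dense, so that the small separating balls are readily available; without it one would have to worry about $N$ containing symplectic balls.

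The hard part, and the step I would spend the most care on, is the separation lemma itself. One must convert the ambient metric balls produced by Hausdorff-convergence into honestly symplectically embedded balls, via Darboux charts, shrinking $\rho$ as needed and absorbing the chart distortion into the constant, and then extract from the Floer theory underlying $\hat{d}^{\mathscr{F},\mathscr{F'}}$ a genuinely \emph{uniform} positive lower bound $c(\rho)$ depending only on the capacity of the separating ball and not on the individual $L_i$. It is this uniformity---a single constant violated by every large $i$---that produces the contradiction with $\hat{d}^{\mathscr{F},\mathscr{F'}}(L_i,L_0)\to 0$, and establishing it is where the energy--capacity mechanism of Hofer and Viterbo must be carried over into the Chekanov-type formalism of \cite{Chasse2021}.
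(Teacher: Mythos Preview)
Your separation lemma with a uniform constant $c(\rho)$ is false, and this breaks your argument for $N\subseteq L_0$. In $T^*S^1$ take $L_0$ the zero section and $L_i=\operatorname{graph}d\big(\tfrac{1}{i}\sin(i^2\theta)\big)$; then $d_H(L_0,L_i)=\max\tfrac{1}{i}\sin(i^2\theta)-\min\tfrac{1}{i}\sin(i^2\theta)=\tfrac{2}{i}\to 0$, yet $L_i$ reaches height $i$, so for $p=(\theta_0,10)$ and $\rho=1$ the Lagrangian $L_i$ meets $B(p,\rho)$ for all large $i$ while $L_0$ avoids $B(p,2\rho)$. More conceptually, a uniform $c(\rho)$ would force $s(L_i;L_0)\to 0$ whenever $\hat{d}^{\mathscr{F},\mathscr{F'}}(L_i,L_0)\to 0$, i.e.\ the hard half of Hausdorff convergence with no Riemannian control on the $L_i$~---~exactly the implication that \cite{Chasse2021} needed such bounds to obtain. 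The monotonicity mechanism you invoke produces its lower bound from holomorphic curves through a point of the \emph{anchor} Lagrangian, so the constant depends on that anchor; when the anchor is $L_i$ the constant is not uniform in $i$, and no energy--capacity inequality rescues this, since knowing only that $L_0$ avoids a ball and $L_i$ meets it does not force any Hamiltonian carrying $L_0$ to $L_i$ to displace that ball.

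What does survive is the asymmetric estimate anchored at $L_0$: this is the paper's Lemma~\ref{lem:half_ineq}, giving $C\,s\big(L_0;L_i\cup E\big)\leq \hat{d}^{\mathscr{F},\mathscr{F'}}(L_0,L_i)$ with $C=C(L_0)$ and $E=\big(\bigcup_{F\in\mathscr{F}}F\big)\cap\big(\bigcup_{F'\in\mathscr{F'}}F'\big)$ a totally disconnected set which you have omitted but which must be handled. This yields only $L_0\subseteq N$ (essentially your second inclusion). The paper then closes the argument \emph{topologically} rather than by a second analytic inclusion: $N$ is connected since each $L_i$ is, $L_0$ is compact hence closed in $N$, and because $L_0$ is an $n$-manifold sitting inside the topological submanifold $N$ of dimension at most $n$, invariance of domain makes $L_0$ open in $N$; connectedness then gives $N=L_0$. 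So the dimension hypothesis is not there to make $N$ nowhere dense~---~any proper submanifold already is~---~but precisely to let this clopen argument replace the unavailable inclusion $N\subseteq L_0$.
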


Concretely, one can take $\hat{d}^{\mathscr{F},\mathscr{F'}}$ to be the Lagrangian Hofer metric, the spectral metric, or one of the shadow metrics, with $\mathscr{L}^\star(M)$ being then the set of all weakly exact (or monotone with good conditions; see again \cite{Chasse2021}) Lagrangian submanifolds.

\subsection{Organization of the paper}
The rest of the paper is divided in two parts. The first one is mainly concerned with proving Theorem~\ref{thm:section} and exploring the rigidity phenomenon underlying it. More precisely, we begin by studying sequences of not-necessarily-Lagrangian immersions which behave well in the codomain (Subsection~\ref{subsec:sequences_nice-image}), then we prove the statement of Theorem~\ref{thm:section} in the exact setting (Subsection~\ref{subsec:app_lag}), and we end by showing that this is truly a Lagrangian phenomenon (Subsection \ref{subsec:rigidity_lag}). It is also in Subsection~\ref{subsec:rigidity_lag} that we prove the statement of Theorem~\ref{thm:section} in the nonexact setting, along with Corollary~\ref{cor:sequence_betti}. The second part is not only concerned with proving Theorem~\ref{thm:eliashberg-gromov}, but also with relating it to the author's previous work (Subsection~\ref{subsec:tame+volume}), extending parts of it to the case $V=\infty$ (Subsection \ref{subsec:rigidity_without_volume_bounds}), and exploring rigidity phenomena beyond Riemannian bounds (Subsection~\ref{subsec:rigidity_without_bounds}) by proving Theorem~\ref{thm:equiv_limits}. \par

\subsection{Acknowledgments}
This research was part of my PhD thesis and was financed by a NSERC and a FRQNT scholarship. I would like to thank my advisor, Octav Cornea, for his continued interest in my research and for the many insightful discussions that we have had. I would also like to thank Sobhan Seyfaddini for his observations regarding $C^0$-converging sequences of Hamiltonian diffeomorphisms. Finally, I am indebted to Dominique Rathel-Fournier for sharing his insight on coverings and pointing out to me Polterovich's construction of nontrivial Lagrangian coverings in the cotangent bundle of flat manifold.

\section{Sequences of immersions} \label{sec:sequences}
The focus of this section is the proof of Theorem \ref{thm:section} and the study of the rigidity of Lagrangian embeddings in general. We thus begin with a general study of sequences of immersions, then we apply this new knowledge to Lagrangian embeddings specifically, and finally explore how this is truly a Lagrangian phenomenon. \par

As mentioned above, we will make great use of Shen's result on sequences of immersions. Therefore, we thought that it could be useful for the reader to simply write the explicit statement here before moving on. \par

\begin{thm*}[\cite{Shen1995}]
	Let $\{f_i:N_i\looparrowright M\}_{i\geq 1}\subseteq \mathscr{I}_k(\Lambda,V)$ be such that $\cup_i f_i(N_i)$ is contained in a compact subset of $M$. Let $0<\alpha'<\alpha<1$. Then, we have the following:
	\begin{enumerate}[label=(\roman*)]
		\item a subsequence, still denoted $\{f_i\}$;
		\item a closed connected smooth manifold $N_0$;
		\item a Riemannian metric $g_0$ on $N_0$ of class $C^{1,\alpha}$;
		\item an immersion $f_0:N_0\looparrowright M$ of class $C^{1,\alpha}$ with $f_0^*g=g_0$;
		\item for each $i$ large enough, a diffeomorphism $\phi_i:N_0\xrightarrow{\sim} N_i$ of class $C^{2,\alpha}$
	\end{enumerate}
	such that $\{f_i\circ\phi_i\}$ converges in the $C^{1,\alpha'}$ topology to $f_0$.
\end{thm*}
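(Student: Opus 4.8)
The plan is to prove this as a Cheeger--Gromov--type compactness theorem adapted to immersions, in which the bound $\|B_{f_i}\|\leq\Lambda$ plays the role of a curvature bound and the volume bound $\Vol(f_i)\leq V$ prevents the number of local charts from blowing up and rules out intrinsic collapse. \textbf{Local graph representation.} The foundational estimate I would establish first is a uniform graphical radius: there exist $r_0=r_0(\Lambda,K)>0$ and $C_0=C_0(\Lambda,K)$, where $K$ is a compact set containing $\cup_i f_i(N_i)$, such that around any $p\in N_i$, in geodesic normal coordinates on $M$ centered at $f_i(p)$ and adapted to the splitting $T_{f_i(p)}M=\Ima df_i(p)\oplus(\Ima df_i(p))^\perp$, the connected component of $f_i(N_i)$ through $f_i(p)$ inside a ball of radius $r_0$ is the graph of a function $\psi_{i,p}$ with $\|\psi_{i,p}\|_{C^{1,1}}\leq C_0$, $\psi_{i,p}(0)=0$, and $D\psi_{i,p}(0)=0$. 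This is an ODE/comparison argument: the bound on $B_{f_i}$ controls the rate at which the tangent plane rotates in $M$ along geodesics of $N_i$ emanating from $p$, keeping the sheet graphical over a ball of definite size, and it directly bounds the Hessian of the graph function.

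\textbf{Finitely many charts and extraction of limits.} Each graph chart carries $f_i^*g$-volume at least $c(\Lambda,K)\,r_0^k$, so if I choose a maximal $(r_0/2)$-separated net $\{p_{i,1},\dots,p_{i,m_i}\}$ in $(N_i,f_i^*g)$, the bound $\Vol(f_i)\leq V$ forces $m_i\leq N_{\max}(\Lambda,V,K)$. Passing to a subsequence, I may assume $m_i\equiv m$ is constant, that the images $f_i(p_{i,j})$ converge to points $x_j\in K$, and that the tangent planes converge in the Grassmann bundle over $M$. On these charts the graph functions $\psi_{i,j}$ are uniformly $C^{1,1}$-bounded, so by Arzel\`a--Ascoli a further subsequence converges in the $C^{1,\alpha'}$ topology to limit graph functions $\psi_{0,j}$; this is exactly where $\alpha'<\alpha$ enters, as uniform $C^{1,\alpha}$ control (inherited from the $C^{1,1}$ bound) gives compactness only in the weaker $C^{1,\alpha'}$ topology.

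\textbf{Assembling the limit and the diffeomorphisms.} The transition maps between overlapping charts on $N_i$ are controlled by the induced metric and are uniformly bounded in $C^{2,\alpha}$, so along a further subsequence they converge and their limits satisfy the cocycle condition; gluing the limit charts via these limit transitions produces a closed connected manifold $N_0$ together with a $C^{1,\alpha}$ immersion $f_0$ built chart-by-chart from the $\psi_{0,j}$ and its induced metric $g_0=f_0^*g$. The diffeomorphisms $\phi_i\colon N_0\to N_i$ arise from interpolating the near-identity chart comparisons (via a partition of unity, or a center-of-mass construction), and $C^{1,\alpha'}$-convergence of $\{f_i\circ\phi_i\}$ to $f_0$ then follows chart-by-chart. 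To upgrade the regularity of $g_0$ to $C^{1,\alpha}$ and of $\phi_i$ to $C^{2,\alpha}$ as claimed~---~rather than the mere Lipschitz regularity that raw graph/geodesic coordinates yield for a metric assembled from first derivatives~---~I would replace these coordinates by harmonic coordinates of $(N_i,f_i^*g)$. The Gauss equation, together with the bounds on $B_{f_i}$ and the ambient curvature on $K$, gives an $L^\infty$ bound on the intrinsic sectional curvature, and elliptic estimates in harmonic charts (Jost--Karcher, Anderson) then yield uniform $W^{2,p}$~---~hence $C^{1,\alpha}$~---~bounds on the metric components $g_{ab}$ in charts of uniform size.

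\textbf{Main obstacle.} The crux is controlling the intrinsic geometry so that the purely local graph charts patch into a \emph{single} closed manifold without a neck collapsing. The second-fundamental-form bound is local, so one must use the volume bound not only to cap the number of charts but to establish a uniform lower bound on the injectivity radius of $(N_i,f_i^*g)$, which is precisely what guarantees harmonic coordinates of uniform size and the consistency of the gluing data in the limit. Translating the extrinsic graphical control plus the volume bound into this intrinsic non-collapsing statement, and verifying that the cocycle data converge to genuine gluing data, is the heart of the argument; once this is in place, the remainder is standard Cheeger--Gromov machinery.
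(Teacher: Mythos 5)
First, a point of comparison: the paper does not prove this statement at all. It is Shen's theorem, quoted verbatim from \cite{Shen1995} purely for the reader's convenience, and the only indication the paper gives of its proof is that it is the submanifold analogue of Gromov--Hausdorff/Cheeger compactness (via Kasue's version, as mentioned in Subsection~\ref{subsec:rigidity_without_volume_bounds}), with the intrinsic curvature and injectivity radius bounds supplied by the bound on the second fundamental form. Your graph-chart strategy~---~uniform graphical radius from $\|B_{f_i}\|\leq\Lambda$, volume bound capping the number of charts, Arzel\`a--Ascoli on the graph functions, gluing of limit charts, harmonic coordinates to upgrade regularity~---~is the standard alternative (Langer--Breuning style) route: it assembles the limit extrinsically, rather than first extracting an intrinsic Cheeger--Gromov limit of $(N_i,f_i^*g)$ and then passing the uniformly $C^2$-bounded maps $f_i\circ\phi_i$ through Arzel\`a--Ascoli. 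That architecture is sound.

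There is, however, a genuine error in your ``main obstacle'' paragraph: you propose to use the volume bound ``to establish a uniform lower bound on the injectivity radius of $(N_i,f_i^*g)$''. An \emph{upper} volume bound can never rule out collapse; for instance, the product tori $\epsilon S^1\times S^1\subseteq\R^4$ have volume tending to $0$, hence satisfy any upper volume bound, yet they collapse~---~what excludes them from $\mathscr{I}_2(\Lambda,V)$ is their second fundamental form blowing up like $1/\epsilon$, not their volume. Non-collapsing must instead come from the second fundamental form bound together with compactness of the ambient region $K$: an intrinsic geodesic of $N_i$ is an ambient curve whose geodesic curvature in $(M,g)$ is at most $\Lambda$, and such a curve cannot close up with length below a scale $r_0(\Lambda,K)>0$; combined with the Gauss-equation curvature bound and Klingenberg's lemma, this gives $r_\mathrm{inj}(N_i)\geq r_0(\Lambda,K)$ \emph{independently of} $V$. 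Equivalently, within your own scheme, the uniform graphical radius you establish in the very first step already provides the non-collapsing, so the obstacle you single out is resolved before the volume bound ever enters. The volume bound's actual role is only the one you assign it earlier: it caps the diameter and the number of charts, which is what forces the limit to be a \emph{closed} manifold and the diffeomorphism types to stabilize; drop it and one obtains exactly the pointed, possibly noncompact version of the theorem stated in Subsection~\ref{subsec:rigidity_without_volume_bounds}. Note finally that the paper itself relies on this division of labour: in the proof of Proposition~\ref{prop:bounded_volume} it invokes Shen's bound $r_\mathrm{inj}(N)\geq r_0(\Lambda,K)$ in order to \emph{derive} a volume bound, which would be circular if the injectivity radius bound in turn required one.
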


Essentially, this theorem is the appropriate generalization to (immersed) submanifolds of the Gromov--Hausdorff compactness theorem \cite{Gromov1981, Katsuda1985} for Riemannian manifolds with uniformly bounded sectional curvature and injectivity radius~---~the latter bound comes from the bound on the second fundamental form in the submanifold case. \par

\subsection{Immersions with converging images} \label{subsec:sequences_nice-image}
In this subsection, we use Shen's theorem to study sequences $\{f_i\}$ of immersions in $\mathscr{I}_k(\Lambda,V)$ which have images behaving well with respect to the Hausdorff metric of $M$. \par

For a closed connected (embedded) submanifold $N$ of $M$ of dimension $k=\dim N_i$ and $r>0$, denote by $B_r(N)$ its tubular neighborhood of size $r$. In this section, we will suppose that there is a sequence $\{r_i\}\subseteq\R_{>0}$ converging to 0 such that
\begin{align} \label{eqn:condition}
f_i(N_i)\subseteq B_{r_i}(N) \qquad\forall i \tag{$\dagger$}.
\end{align}
In other words, if we define for subsets $A,B\subseteq M$
\begin{align*}
s(A;B):=\sup_{x\in A} d_M(x,B):=\sup_{x\in A}\inf_{y\in B} d_M(x,y),
\end{align*}
then Property (\ref{eqn:condition}) is equivalent to $\lim_{i\to\infty} s(f_i(N_i);N)=0$. Remember that the Hausdorff metric of $M$ is given by $\delta_H(A,B)=\max\{s(A;B),s(B;A)\}$. Thus, this condition is \textit{a priori} strictly weaker than Hausdorff convergence to $N$. \par

\begin{lem} \label{lem:hausdorff}
	If Property (\ref{eqn:condition}) holds, then $\{f_i(N_i)\}$ converges to $N$ in the Hausdorff metric.
\end{lem}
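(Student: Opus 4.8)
The plan is to establish the only nontrivial inequality, namely $s(N;f_i(N_i))\to 0$. Indeed, Property~(\ref{eqn:condition}) already gives $s(f_i(N_i);N)\to 0$, and since $\delta_H(f_i(N_i),N)=\max\{s(f_i(N_i);N),\,s(N;f_i(N_i))\}$, this is all that remains to be shown. Intuitively, (\ref{eqn:condition}) confines the images to a shrinking tube around $N$; the content of the lemma is to prevent them from collapsing onto a proper part of $N$, and this is precisely where the Riemannian bounds defining $\mathscr{I}_k(\Lambda,V)$ must enter (a shrinking circle of fixed radius can sit in a thin tube without covering $N$, but only at the cost of unbounded second fundamental form).

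I would argue by contradiction, combined with the usual double-subsequence trick. If $s(N;f_i(N_i))\not\to 0$, I pass to a subsequence along which $s(N;f_i(N_i))\geq\epsilon$ for some fixed $\epsilon>0$. Since $N$ is compact and $\{r_i\}$ is bounded, $\bigcup_i f_i(N_i)\subseteq B_{\sup_i r_i}(N)$ lies in a compact subset of $M$ (closed bounded sets being compact in the complete manifold $M$), so Shen's theorem applies. After passing to a further subsequence, it furnishes a closed connected manifold $N_0$, diffeomorphisms $\phi_i\colon N_0\to N_i$, and a $C^{1,\alpha}$-immersion $f_0\colon N_0\looparrowright M$ such that $f_i\circ\phi_i\to f_0$ in the $C^{1,\alpha'}$ topology, and in particular uniformly.

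From uniform convergence I would deduce $\delta_H(f_i(N_i),f_0(N_0))\to 0$ via the elementary bound $\delta_H(f_i(N_i),f_0(N_0))\leq\sup_{p\in N_0} d_M(f_i(\phi_i(p)),f_0(p))$. Combined with $s(f_i(N_i);N)\to 0$, this forces $f_0(N_0)\subseteq N$: every point of $f_0(N_0)$ is a limit of points whose distance to the closed set $N$ tends to $0$, hence lies in $N$.

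The crux is the final surjectivity step, upgrading $f_0(N_0)\subseteq N$ to $f_0(N_0)=N$; this is where connectedness and the matching of dimensions are decisive. Since $f_0$ is $C^1$ with image in the embedded $k$-dimensional submanifold $N$, it may be regarded as a $C^1$-map $N_0\to N$, and its differential~---~injective of rank $k=\dim N_0=\dim N$~---~is a linear isomorphism onto each tangent space of $N$. Hence $f_0$ is a local diffeomorphism onto $N$, in particular an open map, so $f_0(N_0)$ is open in $N$; being also compact, it is closed in $N$, and since $N$ is connected and $f_0(N_0)\neq\emptyset$ we conclude $f_0(N_0)=N$. Then $\delta_H(f_i(N_i),N)\to 0$ along the further subsequence, so $s(N;f_i(N_i))\to 0$ there, contradicting $s(N;f_i(N_i))\geq\epsilon$. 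This contradiction yields $s(N;f_i(N_i))\to 0$ for the whole sequence, which together with Property~(\ref{eqn:condition}) gives Hausdorff convergence.
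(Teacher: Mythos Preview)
Your proof is correct and follows essentially the same route as the paper: both argue by contradiction, apply Shen's theorem to extract a limiting immersion $f_0$, show $f_0(N_0)\subseteq N$, then use the clopen argument (open by the inverse function theorem, closed by compactness of $N_0$) together with connectedness of $N$ to conclude $f_0(N_0)=N$. Your write-up is slightly more explicit about why Shen's compactness hypothesis holds and about the elementary bound $\delta_H(f_i(N_i),f_0(N_0))\leq d_{C^0}(f_i\circ\phi_i,f_0)$, but the structure is identical.
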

\begin{pr*}
	Property (\ref{eqn:condition}) implies that there exists a Hausdorff-converging subsequence $\{f_i(N_i)\}$ and that such a sequence must have as limit a subset $E\subseteq N$. On the other hand, there exists by Shen's theorem yet another subsequence $\{f_i\}$ which $C^{1,\alpha'}$-converge to a $C^{1,\alpha}$-immersion $f_0:N_0\looparrowright M$. We must then have $f_0(N_0)=N$. Indeed, we may see $f_0$ as an immersion into $N$. But by the inverse function theorem, this immersion is open. Since $N_0$ is compact, $f_0$ is also closed. Therefore, $E=f_0(N_0)$ is clopen; it must thus be equal to $N$. \par
	
	Suppose now that $\{f_i(N_i)\}$ did not converge to $N$. Then we would have a subsequence $\{f_i(N_i)\}$ such that $s(N;f_i(N_i))\geq\epsilon$ for some $\epsilon>0$. We then get a contradiction by passing to a converging subsequence $\{f_i(N_i)\}$, since we have just proven that its Hausdorff limit must be $N$.
\end{pr*}

Lemma \ref{lem:hausdorff} allows us to prove the main technical result of this subsection. \par

\begin{prop} \label{prop:reparam}
	Let $\{f_i:N_i\hookrightarrow M\}\subseteq\mathscr{I}_k(\Lambda,V)$ be such that Property (\ref{eqn:condition}) holds. Denote by $\iota:N\hookrightarrow M$ the inclusion of the limit manifold. For all $\epsilon>0$, there exists $I\in\N$ such that if $i\geq I$, then there exists a finite $C^1$-covering\footnote[2]{By $C^1$-covering, we here mean that both $p_i$ and its local trivalizations $p_i^{-1}(U)\to U\times G$ are of class $C^1$. The finiteness means that $|G|<\infty$.} $p_i:N_i\to N$ such that
	\begin{align*}
	d_{C^{1,\alpha'}}(f_i,\iota\circ p_i)<\epsilon.
	\end{align*}
\end{prop}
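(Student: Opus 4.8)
The plan is to combine Shen's compactness theorem with Lemma~\ref{lem:hausdorff} and the elementary topology of tubular neighborhoods. First I would argue by contradiction: suppose the conclusion fails, so there is some $\epsilon>0$ and a subsequence (still denoted $\{f_i\}$) for which no finite $C^1$-covering $p_i:N_i\to N$ satisfies $d_{C^{1,\alpha'}}(f_i,\iota\circ p_i)<\epsilon$. Along this subsequence I apply Shen's theorem to extract a further subsequence, a closed connected smooth manifold $N_0$, a $C^{1,\alpha}$-immersion $f_0:N_0\looparrowright M$, and $C^{2,\alpha}$-diffeomorphisms $\phi_i:N_0\xrightarrow{\sim}N_i$ such that $f_i\circ\phi_i\to f_0$ in $C^{1,\alpha'}$. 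By the argument already given in the proof of Lemma~\ref{lem:hausdorff}, $f_0(N_0)=N$, and since $f_0$ is an open, closed, proper immersion into the smooth manifold $N$ of the same dimension, $f_0:N_0\to N$ is a (finite, since $N_0$ is compact and $N$ connected) smooth covering map.

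The second step is to promote $f_0$ to the desired covering $p_i:N_i\to N$. The natural candidate is $p_i:=f_0\circ\phi_i^{-1}:N_i\to N$, which is a finite covering of class $C^1$ (indeed $C^2$, since $\phi_i^{-1}$ is $C^{2,\alpha}$ and $f_0$ is $C^{1,\alpha}$), so it meets the regularity and finiteness requirements in the footnote. It then remains to estimate $d_{C^{1,\alpha'}}(f_i,\iota\circ p_i)$. Precomposing both $f_i$ and $\iota\circ p_i=\iota\circ f_0\circ\phi_i^{-1}$ with the diffeomorphism $\phi_i$ — which does not change the relevant distance once I note that Shen's diffeomorphisms have uniformly controlled $C^{1,\alpha'}$-norms — reduces the estimate to comparing $f_i\circ\phi_i$ with $\iota\circ f_0$. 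But $\iota\circ f_0$ is precisely $f_0$ viewed as a map into $M$, and $f_i\circ\phi_i\to f_0$ in $C^{1,\alpha'}$ by Shen's theorem. Hence $d_{C^{1,\alpha'}}(f_i\circ\phi_i,\,\iota\circ f_0)<\epsilon$ for $i$ large, contradicting the choice of the bad subsequence.

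The main obstacle I anticipate is the reduction in the second step, namely that $d_{C^{1,\alpha'}}(f_i,\iota\circ p_i)$ is genuinely controlled by $d_{C^{1,\alpha'}}(f_i\circ\phi_i,\iota\circ f_0)$. The $C^{1,\alpha'}$-distance is measured on maps with a fixed domain, so comparing $f_i$ (defined on $N_i$) with $\iota\circ p_i$ (also on $N_i$) after transporting everything to the fixed domain $N_0$ via $\phi_i$ requires knowing that composition with $\phi_i$ is uniformly Lipschitz in the $C^{1,\alpha'}$-norm; this in turn needs the uniform $C^{2,\alpha}$-bounds on $\{\phi_i\}$ that are implicit in Shen's theorem, together with a chain-rule and Hölder-seminorm bookkeeping argument. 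A cleaner route, which I would prefer, is to define the distance intrinsically on $N_0$ from the outset: set $p_i:=f_0\circ\phi_i^{-1}$ and simply observe that $f_i$ and $\iota\circ p_i$ agree up to the reparametrization $\phi_i$, so their $C^{1,\alpha'}$-proximity is exactly the statement that $f_i\circ\phi_i$ and $f_0$ are $C^{1,\alpha'}$-close, which is Shen's conclusion verbatim. This sidesteps the technical bound and makes the uniform $I$ manifest, since the convergence $f_i\circ\phi_i\to f_0$ is uniform in the $C^{1,\alpha'}$-topology.
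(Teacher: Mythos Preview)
Your proof is correct and follows the paper's overall structure: argue by contradiction, apply Shen's theorem, use Lemma~\ref{lem:hausdorff} to get $f_0(N_0)=N$, show that $f_0:N_0\to N$ is a finite covering, and set $p_i=f_0\circ\phi_i^{-1}$. The one genuine difference is in how you establish that $f_0$ is a covering. You use the elementary fact that a $C^1$-immersion between equidimensional manifolds is a local diffeomorphism by the inverse function theorem, and that a proper local diffeomorphism onto a connected manifold is a finite covering. The paper instead observes that $p=\iota^{-1}\circ f_0$ is a proper surjective submersion and invokes Ehresmann's fibration theorem~---~needing, since $f_0$ is only $C^{1,\alpha}$, the $C^1$ version due to Earle and Eells~\cite{EarleEells1967}. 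Your route is more direct and sidesteps that regularity subtlety entirely; the paper's route is of course also valid but brings in a heavier tool than necessary in equal dimensions. One minor slip: $p_i=f_0\circ\phi_i^{-1}$ is $C^{1,\alpha}$, not $C^2$, since $f_0$ is only $C^{1,\alpha}$; this is harmless as only $C^1$ is required. Your discussion of how to read $d_{C^{1,\alpha'}}(f_i,\iota\circ p_i)$ via pullback to $N_0$ is more explicit than the paper's, which takes the identification for granted.
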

\begin{pr*}
	Take a $C^{1,\alpha'}$-converging subsequence $\{f_i\}$~---~which exists by Shen's result~---~and denote by $f_0:N_0\looparrowright M$ its limit. By Lemma \ref{lem:hausdorff}, the image of $f_0$ is $N$. We thus get a map $p:N_0\to N$ making the diagram
	\begin{center}
		\begin{tikzcd}[row sep=6pt]
			N_0 \arrow[rrd,"f_0"] \arrow[dd,dotted,"p"'] & & \\
			& & M \\
			N \arrow[rru,hook,"\iota"] & &
		\end{tikzcd}
	\end{center}
	commute by inverting $\iota$ on the image of $f_0$. \par
	
	Note that $p$ is a surjective submersion between closed manifolds. In particular, $p$ is necessarily proper. Therefore, by Ehresmann's fibration theorem, $p$ must be a locally trivial fibration over $N$. For dimensional reasons, $p$ must thus be a covering. We can then take $p_i=p\circ\phi_i^{-1}$, where $\phi_i:N_0\to N_i$ is a diffeomorphism. \par
	
	Note that even though Ehresmann's theorem is usually stated for smooth maps, there is a version of the theorem for $C^1$ maps due to Earle and Eells~\cite{EarleEells1967} in the much more general setting of Finsler manifolds modelled on a Hilbert space (see their section 4(A)). This thus allow us to use the theorem even though $p$ is only \textit{a priori} of class $C^{1,\alpha}$. However, if $f_0$ is smooth, then so are $p$ and the $p_i$'s. \par
	
	This thus implies the result for any converging subsequence. Suppose that the statement is not true for the sequence $\{f_i\}$ itself. Then, we get a subsequence $\{f_i\}$ such that $d_{C^{1,\alpha'}}(f_i,\iota\circ p)\geq\epsilon$ for some $\epsilon>0$ and for all coverings $N_i\to N$. Passing to a converging subsequence, we clearly get a contradiction.
\end{pr*}

\begin{rem} \label{rem:covering}
	Of course, $p$~---~and thus $p_i$ for $i$ large~---~must be a diffeomorphism whenever $N$ is simply connected. However, in full generality, it was pointed to us by Dominique Rathel-Fournier that it is entirely possible for $p$ to be a covering~---~even in the nicest of cases. For example, one can consider the sequence of embeddings
	\begin{center}
		\begin{tikzcd}[row sep=0pt,column sep=1pc]
			f_i\colon \T^2 \arrow{r} & \T^2\times\R^2=T^*\T^2 \\
			{\hphantom{\alpha\colon{}}} (\theta_1,\theta_2) \arrow[mapsto]{r} & {\left(2\theta_1,\theta_2,\frac{1}{i}\cos\theta_1,\frac{1}{i}\sin\theta_1\right)}.
		\end{tikzcd}
	\end{center}
	Clearly, this is a sequence having Property (\ref{eqn:condition}) for $N=\T^2\times\{0\}$. Furthermore, a direct computation gives that this sequence is in $\mathscr{I}_2(1/\sqrt{5},4\sqrt{5}\pi^2)$. However, the associated map $p:N_0\to N$ is the double cover $(\theta_1,\theta_2)\mapsto (2\theta_1,\theta_2)$.
\end{rem}

In some sense, $p$ measures the difference between the abstract Gromov--Hausdorff (or equivalently Lipschitz, see Theorem~8.25 of \cite{Gromov1981}) limit of the sequence of geometrically bounded Riemannian manifolds $\{(N_i,f_i^*g)\}$ and the classical Hausdorff limit of the sequence of compact subsets $\{f_i(N_i)\}$ in $(M,g)$. Indeed, $N_0$ is precisely the first limit (e.g.\ $N_0$ is isometric to $\R^2/(2\Z\times\Z)$ in the example of Remark~\ref{rem:covering}), whilst $N$ is the second one (e.g.\ $N$ is $\R^2/\Z^2$ in Remark~\ref{rem:covering}). Proposition~\ref{prop:reparam} tells us precisely that these two limits are related by a finite covering. The source of this difference in limits is the fact that the map $f_i:(N_i,d_{N_i})\to (f(N_i),d_M)$ is not in general a metric isometry. Here, $d_M$ is the restriction of the metric on $M$ to $f(N_i)$, and $d_{N_i}$ is the metric on $N$ induced by the Riemannian metric $f_i^*g$. Indeed, we only know that $f_i$ is 1-Lipschitz. \par

\subsection{Proof of the first part of Theorem \ref{thm:section}} \label{subsec:app_lag}
We now apply the above results to Lagrangian submanifolds and prove metric versions of the nearby Lagrangian conjecture and of the Viterbo conjecture on the spectral norm. Therefore, from now on, we suppose that $M=T^*L$ for some $n$-dimensional closed connected Riemannian manifold $L$. We equip $T^*L$ with the standard symplectic form, almost complex structure and metric. \par

We need to prove the Theorem \ref{thm:section}. Note that it gives us a proof of Corollary \ref{cor:conjectures} right away; the proof of Corollary \ref{cor:hausdorff_to_hofer} will only be given in Subsection \ref{subsec:compactness}. \par

\begin{proof}[Proof of Corollary \ref{cor:conjectures}]
	Let $\{f_i\}$ be as in Theorem \ref{thm:section}. Since $f_i(L_i)$ is an exact Lagrangian graph, it must be the graph of an exact 1-form $dh_i$. We take the vector field $X_i$ defined via $\iota_{X_i}\omega=\beta\pi^*dh_i$, where $\beta$ is a compactly supported bump function which is identically 1 on $B_{r_1}(L)=D_{r_1}^* L$. Here, $\pi:T^*L\to L$ denotes the canonical projection. Then, $X_i$ generates a compactly supported Hamiltonian isotopy sending the zero section to $f_i(L_i)$. \par
	
	The fact that $f_i(L_i)$ is a graph also implies that the Floer complex $CF(f_i(L_i),T^*_x L)$ has only one generator for any $x\in L$. In particular, its boundary depth is zero. Therefore, by work of Biran and Cornea \cite{BiranCornea2021}, we also get $\gamma(L,f_i(L_i))\leq A$. Here, $A$ is the constant appearing in the work of Biran and Cornea associated to $D_1^* L$, which $f_i(N_i)$ is in for $i$ large enough. \par
	
	The proof concludes by contradiction: if a $R\leq 1$ as in the theorem did not exist, we would then have a sequence of exact Lagrangian embeddings respecting Property (\ref{eqn:condition}), but not respecting the conclusions of the theorem. This would be a contradiction with the above paragraph.
\end{proof}

We now give a proof of statement in Theorem~\ref{thm:section} on sequences of exact Lagrangian submanifolds. \par

\begin{proof}[Proof of the first part of Theorem~\ref{thm:section}]
	By work of Abouzaid and Kragh \cite{AbouzaidKragh2018}, the composition $L_i\hookrightarrow T^*L \to L$ is a (simple) homotopy equivalence. In particular, it is an isomorphism on the fundamental group. \par
	
	On the other hand, by Proposition \ref{prop:reparam}, $f_i$ must be transverse to every fiber for $i$ large enough. Therefore, $\pi|_{f_i(L_i)}$ must be a covering onto $L$. However, by the above paragraph, the covering is trivial, i.e.\ $\pi|_{f_i(L_i)}$ is a diffeomorphism. Therefore, $f_i(L_i)$ must be the graph of a 1-form for $i$ large enough.
\end{proof}

\begin{rem} \label{rem:section-vs-covering}
	\hspace{2em}
	\begin{enumerate}[label=(\arabic*)]
		\item The proof of Theorem \ref{thm:section} applies for any simply connected complete Riemannian manifold $N$. We then get that $f_i(N_i)$ is the graph of a section of the normal bundle of $N$ in $M$. More generally, without any topological assumption on $N$, $f_i(N_i)$ admits a lift $\widetilde{N}_i$ in the normal bundle of $N$ in $f_0^*TM$, and this lift is the graph of a section of that normal bundle. Note that this is true even when neither $f_i$ nor the limit $f_0$ is an embedding.
		\item As we have seen in Remark \ref{rem:covering} however, Theorem \ref{thm:section} is not true for non-Lagrangian embeddings. In fact, Theorem \ref{thm:section} is typically not even true for \emph{nonexact} Lagrangian submanifolds. Indeed, Polterovich \cite{Polterovich1990} constructed for any closed flat manifold $W\neq\T^n$ Lagrangian tori in $T^*W$ having the property that the composition $\T^n\hookrightarrow T^*W\to W$ is a nontrivial cover. These tori can be realized as the image under the natural map $T^*\T^n\to T^*W$ of the graph of any constant 1-form on $\T^n$. Therefore, when we equip $\T^n$ and $W$ with the flat metric and their cotangent bundle with the corresponding Sasaki metric, the tori are totally geodesic, have the same volume as $\T^n$, and can be taken to be arbitrary close to the zero section of $T^*W$. In other words, we get a sequence of Lagrangian embeddings in $\mathscr{I}_n(0,\mathrm{Vol}(\T^n))$ converging in $T^*W$ to a nontrivial covering of $W$. In particular, the second possibility in Corollary~\ref{cor:sequence_betti} can indeed happen. We will explore this kind of phenomenon in more details in the next subsection.
	\end{enumerate}
\end{rem}

\subsection{Rigidity of Lagrangian embeddings} \label{subsec:rigidity_lag}
As we are studying Riemannian and symplectic phenomena at the same time, it can be hard to parse what comes from the Riemannian bounds and what comes from the Lagrangian condition. In fact, it could \textit{a priori} be the case that a result analogous to Theorem \ref{thm:section} exists for an appropriate class of non-Lagrangian submanifolds. Indeed, as noted in Remark \ref{rem:section-vs-covering}, it seems that the exactness condition~---~not just the Lagrangian condition~---~is required for the result. In this subsection, we thus want to dispel the idea that this could be an entirely non-Lagrangian phenomenon. \par

We begin by exploring some basic properties of Lagrangian embeddings. \par

\begin{prop} \label{prop:lagrangian_coverings}
	Let $f:L'\hookrightarrow T^*L$ be a Lagrangian embedding such that $\pi\circ f:L'\to L$ is a finite covering. Then, $\pi\circ f$ is a diffeomorphism if and only if the first Betti numbers of $L$ and $L'$ are the same, i.e.\ $b_1(L')=b_1(L)$.
\end{prop}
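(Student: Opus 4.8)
The plan is to prove the nontrivial implication, namely that $b_1(L')=b_1(L)$ forces $p:=\pi\circ f$ to be a diffeomorphism; the converse is immediate, since a diffeomorphism identifies $L'$ with $L$ and hence their Betti numbers. Write $d$ for the number of sheets of the covering $p$.

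First I would reinterpret the hypothesis cohomologically. For any finite covering of closed manifolds the transfer map $\tau\colon H^1(L';\R)\to H^1(L;\R)$ satisfies $\tau\circ p^*=d\cdot\mathrm{id}$, so $p^*\colon H^1(L;\R)\to H^1(L';\R)$ is injective; in particular $b_1(L')\geq b_1(L)$, with equality if and only if $p^*$ is an isomorphism. Thus the assumption $b_1(L')=b_1(L)$ is exactly the statement that $p^*$ is \emph{surjective} on first real cohomology.

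Next I would use this surjectivity to reduce to the exact case. Let $\lambda$ be the tautological $1$-form on $T^*L$; since $f$ is Lagrangian, $f^*\lambda$ is closed. By surjectivity of $p^*$ there is a closed $1$-form $\beta$ on $L$ with $[f^*\lambda]=p^*[\beta]$ in $H^1(L';\R)$. The fiberwise translation $T_\beta(x,\eta)=(x,\eta-\beta(x))$ is a symplectomorphism of $T^*L$ because $\beta$ is closed, and it commutes with $\pi$. Hence $T_\beta\circ f$ is again a Lagrangian embedding with $\pi\circ(T_\beta\circ f)=p$, and since $T_\beta^*\lambda=\lambda-\pi^*\beta$ one computes $(T_\beta\circ f)^*\lambda=f^*\lambda-p^*\beta$, whose cohomology class vanishes. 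After this modification (which changes neither the embedding property nor the covering $p$) the submanifold $L'$ is an \emph{exact} Lagrangian in $T^*L$ whose projection to $L$ is still $p$.

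Finally I would invoke Abouzaid--Kragh \cite{AbouzaidKragh2018}: the projection of a closed exact Lagrangian in $T^*L$ onto $L$ is a (simple) homotopy equivalence, so $p_*\colon\pi_1(L')\to\pi_1(L)$ is an isomorphism. But a $d$-fold covering induces an injection $p_*$ whose image has index $d$; being an isomorphism forces $d=1$, so $p$ is a one-sheeted covering, i.e.\ a diffeomorphism. The conceptual heart of the argument — and the step I expect to require the most care to get right — is recognizing that the equality $b_1(L')=b_1(L)$ is precisely the cohomological condition that lets the Liouville class be killed by a symplectic fiber-translation, thereby reducing the general Lagrangian statement to the exact case where Abouzaid--Kragh applies. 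The remaining ingredients (the transfer inequality, the translation being symplectic, and the covering-degree bookkeeping) are routine.
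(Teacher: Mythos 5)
Your proof is correct and follows essentially the same route as the paper's: both use injectivity of $p^*$ on $H^1(\cdot;\R)$ for a finite covering to turn the Betti-number hypothesis into surjectivity, kill the Liouville class $[f^*\lambda]$ by a fiberwise translation by a closed $1$-form (you write the time-one map $T_\beta$ explicitly, the paper writes it as the flow of $\iota_X\omega=-\pi^*\sigma$), and then apply Abouzaid--Kragh in the exact case to conclude that the covering has degree one. The only cosmetic difference is that you justify the injectivity of $p^*$ via the transfer map, which the paper takes for granted.
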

\begin{pr*}
	One implication is of course trivial. Suppose therefore that $b_1(L')=b_1(L)$. Since $\pi\circ f$ is a finite covering, we know that $(\pi\circ f)^*:H^1(L;\R)\to H^1(L';\R)$ is injective. By the condition on the Betti numbers, it is thus an isomorphism. \par
	
	Let $\sigma':=f^*\lambda$, where $\lambda$ is the canonical 1-form on $T^*L$. Note that $\sigma'$ is closed since $f$ is Lagrangian. By the above paragraph, there exists a 1-form $\sigma$ on $L$ such that $(\pi\circ f)^*[\sigma]=[\sigma']$. Let $\{\psi_t\}$ be the symplectic isotopy generated by the vector field $X$ defined by $\iota_X\omega=-\pi^*\sigma$. We then have
	\begin{align*}
	\left[(\psi_1\circ f)^*\lambda\right] = \left[f^*\lambda + \int_0^1\iota_X\omega\right] = [\sigma']-f^*\pi^*[\sigma]=0,
	\end{align*}
	where we have made use of Cartan's magic formula and the definition of the Lie derivative for vector fields. In other words, it must be that $\psi_1\circ f:L'\hookrightarrow T^*L$ is an exact Lagrangian embedding. As previously noted, $\pi\circ\psi_1\circ f$ must then be an isomorphism on the fundamental group. Therefore, the same holds for $\pi\circ f=\pi\circ\psi_0\circ f$; it must thus be a diffeomorphism.
\end{pr*}

Combining Propositions \ref{prop:reparam} and \ref{prop:lagrangian_coverings}, we directly get the following result.

\begin{cor} \label{cor:nearby_symp}
	Let $L$ be a closed connected Riemannian manifold such that any finite covering $L'\to L$ is such that $b_1(L')=b_1(L)$, e.g.\ $\pi_1(L)$ is free, abelian free, or finite. Let $\Lambda\geq 0$ and $V>0$. There exists $R>0$ such that whenever $f\in\mathscr{I}_n(\Lambda,V)$ is a Lagrangian embedding with image in $D^*_R L$, then said image is symplectomorphic to the zero section.
\end{cor}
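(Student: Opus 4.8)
The plan is to argue by contradiction, converting the sought-after uniform radius $R$ into a statement about sequences so that Proposition~\ref{prop:reparam} becomes applicable. Suppose no such $R$ exists. Then for each $i\in\N$ there is a Lagrangian embedding $f_i\in\mathscr{I}_n(\Lambda,V)$, with domain $L_i$, whose image lies in $D^*_{1/i}L$ but is \emph{not} symplectomorphic to the zero section. The resulting sequence $\{f_i:L_i\hookrightarrow T^*L\}$ lies in $\mathscr{I}_n(\Lambda,V)$ and satisfies Property~(\ref{eqn:condition}) with $N$ the zero section and $r_i=1/i\to 0$; moreover $\cup_i f_i(L_i)\subseteq D^*_1 L$, which is compact, so Shen's theorem (hence Proposition~\ref{prop:reparam}) applies.

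Next I would extract a covering. Fixing $\epsilon>0$ small and applying Proposition~\ref{prop:reparam}, for $i$ large there is a finite $C^1$-covering $p_i:L_i\to L$ with $d_{C^{1,\alpha'}}(f_i,\iota\circ p_i)<\epsilon$. Composing with the smooth projection $\pi:T^*L\to L$ and using $\pi\circ\iota=\mathrm{id}_L$, the map $\pi\circ f_i$ is $C^1$-close to $p_i$. Since $p_i$ is a local diffeomorphism (both manifolds being $n$-dimensional) and full rank of the differential is an open condition, $\pi\circ f_i$ is itself a local diffeomorphism for $\epsilon$ small; as $L_i$ is closed and $L$ connected, $\pi\circ f_i:L_i\to L$ is then a finite covering, exactly as in the proof of the first part of Theorem~\ref{thm:section}.

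Now the hypothesis on $L$ enters: since $L_i\to L$ is a finite covering, the assumption forces $b_1(L_i)=b_1(L)$. Proposition~\ref{prop:lagrangian_coverings} then makes $\pi\circ f_i$ a diffeomorphism, so $f_i(L_i)$ is the graph of a $1$-form $\sigma_i$ on $L$, necessarily closed because $f_i$ is Lagrangian. Finally, the fibrewise translation $\tau_{\sigma_i}\colon(x,p)\mapsto(x,p+\sigma_i(x))$ is a symplectomorphism of $T^*L$ precisely because $\sigma_i$ is closed, and it carries the zero section onto $f_i(L_i)$; hence $f_i(L_i)$ \emph{is} symplectomorphic to the zero section, contradicting the choice of $f_i$. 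This contradiction produces the desired $R$.

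The assembly is short because all the genuine content sits in the two cited propositions. The single point demanding care is the passage from $C^1$-closeness to the honest covering property of $\pi\circ f_i$ (equivalently, transversality to every fibre), together with the observation that the contradiction scheme is what upgrades the per-sequence conclusion to one radius $R$ uniform over all of $\mathscr{I}_n(\Lambda,V)$. I expect no serious obstacle beyond this, since Proposition~\ref{prop:lagrangian_coverings} already converts the Betti-number hypothesis into the needed diffeomorphism statement, and the graph-to-zero-section step is the standard fibre-translation symplectomorphism.
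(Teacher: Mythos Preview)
Your proof is correct and follows essentially the same route as the paper, which simply states that the corollary follows ``directly'' from combining Propositions~\ref{prop:reparam} and~\ref{prop:lagrangian_coverings}; you have merely made explicit the contradiction scheme (parallel to the proof of Corollary~\ref{cor:conjectures}), the passage from $C^1$-closeness to $\pi\circ f_i$ being a covering (as in the proof of Theorem~\ref{thm:section}), and the final fibre-translation symplectomorphism. No gaps.
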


Likewise, Proposition~\ref{prop:lagrangian_coverings} allows us to prove the nonexact statement of Theorem~\ref{thm:section} and Corollary~\ref{cor:sequence_betti}.

\begin{proof}[Proof of the second part of Theorem~\ref{thm:section}]
	By Proposition~\ref{prop:reparam}, $\pi\circ f_i$ is a finite covering onto $L$ for $i$ large. By Proposition~\ref{prop:lagrangian_coverings}, this covering is a diffeomorphism, i.e.\ $f_i(L_i)$ is the graph of a 1-form, if and only if $b_1(L_i)=b_1(L)$, which proves the statement.
\end{proof}

\begin{proof}[Proof of the second part of Theorem~\ref{thm:section}]
	Again, by Proposition~\ref{prop:reparam}, $\pi\circ f_i$ is a finite covering onto $L$ for $i$ large. As noted in the proof of Proposition~\ref{prop:lagrangian_coverings}, $(\pi\circ f)^*:H^1(L;\R)\to H^1(L';\R)$ is then injective. In particular, $b_1(L_i)\geq b_1(L)$. We can thus find a subsequence such that either $b_1(L_i)=b_1(L)$ or $b_1(L_i)> b_1(L)$. By Theorem~\ref{thm:section}, the first case is equivalent to $f_i(L_i)$ being the graph of a 1-form $\sigma_i$ in a Weinstein neighborhood $\Psi$ of $L$. Then, $\{L_t:=\Psi(\mathrm{graph}\ t\sigma_i)\}$ gives the required Lagrangian isotopy.
\end{proof}

\begin{rem} \label{rem:symp_vs_lag_iso}
	Even though there exists a symplectic isotopy in $T^*L$ sending $L$ to $\mathrm{graph}\ \sigma_i$, e.g.\ the one generated by $\pi^*\sigma_i$, this isotopy is not compactly supported. In fact, there cannot be compactly-supported symplectic isotopy sending $L$ to $\mathrm{graph}\ \sigma_i$ if $\sigma_i$ is not exact: since $H^1_c(T^*L)=0$ (when $L\neq S^1$), the flux short exact sequence implies that $\mathrm{Ham}_c(T^*L)=\mathrm{Symp}_{0,c}(T^*L)$. However, there is no guaranty that this noncompactly-supported symplectic isotopy in a Weinstein neighborhood can be extended to the whole ambient symplectic manifold. The obvious exception to this is when the ambient manifold is itself a cotangent bundle and $L$ is (in the Hamiltonian orbit of) the 0-section.
\end{rem}

Note that the equivalent results to Corollaries~\ref{cor:sequence_betti} and~\ref{cor:nearby_symp} in the smooth category are entirely false, as we have seen in Remark \ref{rem:covering} with the 2-torus. Therefore, the introduction of Riemannian bounds truly allows to capture some symplectic rigidity, even when just considering Hausdorff-converging sequences. \par

However, the rigidity goes further than this. Indeed, the main motivation behind the study of Hausdorff-converging sequences is its importance when studying sequences converging in metrics coming from symplectic topology (c.f.\ \cite{Chasse2021}). Therefore, there is another rigidity question that crops up: does Theorem A of \cite{Chasse2021} holds for non-Lagrangian submanifolds? Of course, such a question makes no sense for most metrics coming from symplectic topology. One exception to this rule is however the Hofer pseudometric~\cite{Chekanov2000}, which makes sense for any submanifolds. \par

In other words, for a $n$-dimensional submanifold $N$ of a $2n$-dimensional symplectic manifold $M$, we can define\index{métrique de Hofer lagrangienne!généralisation non-lagrangienne}
\begin{align*}
d_H(N,N'):=\inf\left\{||\phi||_H\ \middle|\ \phi\in\mathrm{Ham}(M),\ \phi(N)=N'\right\},
\end{align*}
whenever $N'$ is Hamiltonian isotopic to $N$. Here, $||\cdot||_H$ denotes the Hofer norm. Let $\{N_i\}$ is a sequence of non-Lagrangian submanifolds converging in $d_H$ to $N_0$ and such that the inclusion $N_n\hookrightarrow M$ is in $\mathscr{I}_n(\Lambda,V)$ for some $\Lambda$ and $V$. Does $\{N_i\}$ behave like in the Lagrangian case, i.e.\ must $N_i\to N_0$ in $\delta_H$? \par

An obvious obstruction to that being the case is if $d_H$ is degenerate on the Hamiltonian orbit of $N$. However, by work of Usher~\cite{Usher2014}, this is precisely the case whenever $N$ is non-Lagrangian. In fact, $d_H(N,\cdot)\equiv 0$ whenever $N$ is nowhere Lagrangian, i.e.\ $\omega|_{T_xN}\neq 0$ for all $x\in N$. Furthermore, the set of nowhere Lagrangian embeddings $N\hookrightarrow M$ is residual in the $C^\infty$ topology and open in the $C^2$ topology whenever $n\geq 2$, i.e.\ whenever there are non-Lagrangian $n$-dimensional submanifolds. Therefore, $d_H$ is generically very much degenerate in the non-Lagrangian case. \par

This thus shows that at every step of the process, introducing Riemannian bounds does not reduce the Lagrangian case to the general one, but rather shows some new type of symplectic rigidity. \par

\begin{rem}
	The submanifold $N'$ such that $d_H(N,N')=0$ that we find is in $\mathscr{I}_n(\Lambda',V')$ for some $\Lambda'\geq\Lambda$ and $V'\geq V$, but not necessarily in $\mathscr{I}_n(\Lambda,V)$. This flexibility in the choice of a constant is however necessary to study symplectic~---~and not Riemannian~---~rigidity phenomena. For example, $\mathscr{I}_n(0,V)$ is made out of totally geodesic submanifolds, and we should expect some very strong rigidity, whether $N$ is Lagrangian or not.
\end{rem}

\section{Hausdorff limits of sequences of certain submanifolds} \label{sec:compactness}
In this section, we prove rigidity results for sequences of certain submanifolds of symplectic and contact manifolds. These results are shown mostly in the presence of Riemannian bounds, but some still hold without their presence. We also use this opportunity to prove Corollary \ref{cor:hausdorff_to_hofer} and relate it to the author's previous work. \par

\subsection{Proof of Theorem \ref{thm:eliashberg-gromov}} \label{subsec:compactness}
In this subsection, we prove the various parts of Theorem \ref{thm:eliashberg-gromov}. In order to make the presentation smoother, we however instead present it as a series of simpler results. \par

\begin{lem} \label{lem:compact_iso}
	Let $\{f_i:L_i\looparrowright M\}\subseteq \mathscr{I}_k(\Lambda,V)$ be a sequence of isotropic immersions of a symplectic manifold $(M,\omega)$ or of a contact manifold $(M,\xi)$. Suppose that $\{f_i(L_i)\}$ Hausdorff-converges to a closed subset $N$. Then, $N$ is the image of a $k$-dimensional isotropic $C^{1,\alpha}$-immersion $f_0:L_0\looparrowright M$, where $L_0$ is closed and connected.
\end{lem}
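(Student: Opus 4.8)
The plan is to reduce everything to Shen's theorem and then verify that the isotropic condition survives passage to the $C^{1,\alpha'}$-limit. First I would check that Shen's theorem actually applies. Since each $f_i(L_i)$ is compact and $\{f_i(L_i)\}$ Hausdorff-converges to $N$, the union $\bigcup_i f_i(L_i)$ is bounded; as $(M,g)$ is complete, Hopf--Rinow guarantees that the closure of this union is compact. Shen's theorem then yields, after passing to a subsequence, a closed connected smooth manifold $L_0$, a $C^{1,\alpha}$-immersion $f_0:L_0\looparrowright M$, and $C^{2,\alpha}$-diffeomorphisms $\phi_i:L_0\to L_i$ such that $f_i\circ\phi_i\to f_0$ in the $C^{1,\alpha'}$ topology. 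In particular $L_0$ is closed, connected, and of dimension $k$, which already settles those parts of the statement.

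Next I would identify the image of $f_0$ with $N$, exactly as in the proof of Lemma~\ref{lem:hausdorff}. Since reparametrization does not change the image, $(f_i\circ\phi_i)(L_0)=f_i(L_i)$, so the $C^0$-convergence $f_i\circ\phi_i\to f_0$ forces $\{f_i(L_i)\}$ along this subsequence to Hausdorff-converge to $f_0(L_0)$. As the full sequence converges to $N$ by hypothesis and Hausdorff limits are unique, we obtain $f_0(L_0)=N$. Note that passing to a subsequence is harmless here, since the conclusion only asserts the existence of one such immersion with image $N$.

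The heart of the matter is preserving isotropy. In the symplectic case, each $f_i$ isotropic means $f_i^*\omega=0$, hence $(f_i\circ\phi_i)^*\omega=\phi_i^*(f_i^*\omega)=0$. Because pulling a smooth $2$-form back along a map involves only the map together with its first derivatives, the $C^{1,\alpha'}$-convergence $f_i\circ\phi_i\to f_0$ yields $C^0$-convergence $(f_i\circ\phi_i)^*\omega\to f_0^*\omega$; the limit of the identically-zero forms is therefore $f_0^*\omega=0$, so $f_0$ is isotropic. In the contact case I would phrase isotropy as the condition that the image of $df_i$ lie in the contact distribution $\xi\subseteq TM$. Since $\xi$ is a smooth, hence closed, subbundle of $TM$ and $d(f_i\circ\phi_i)\to df_0$ uniformly, the limiting tangent planes satisfy $df_0(T L_0)\subseteq\xi$, so $f_0$ is contact isotropic. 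Equivalently, working locally with a contact form $\alpha$ for $\xi$, the relations $f_i^*\alpha=0$ pass to $f_0^*\alpha=0$ by the same pullback-convergence argument.

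The main obstacle I anticipate is not conceptual but a matter of regularity bookkeeping: the limit $f_0$ is only $C^{1,\alpha}$, so $f_0^*\omega$ (resp.\ $f_0^*\alpha$) is merely a continuous form, not a smooth one. The key point to verify carefully is that isotropy of a $C^1$-immersion is correctly encoded by the vanishing of this continuous pullback, and that the pullback operation is genuinely continuous as a map from $C^{1,\alpha'}$-immersions to $C^0$-forms, so that vanishing of each $f_i^*\omega$ really forces vanishing of the limit. Once this continuity is pinned down, no compactness, transversality, or symplectic input beyond Shen's theorem is required, and the proof closes.
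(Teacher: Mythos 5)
Your proposal is correct and follows essentially the same route as the paper: apply Shen's theorem to extract a $C^{1,\alpha'}$-convergent reparametrized subsequence, identify $f_0(L_0)$ with $N$ via the bound $\delta_H(f_i(L_i),f_0(L_0))\leq d_{C^0}(f_i\circ\phi_i,f_0)$ together with compactness of $f_0(L_0)$, and pass the vanishing of the pullback of $\omega$ (resp.\ of a contact form, taken locally in the non-coorientable case) to the limit using the fact that pullback only involves first derivatives. Your explicit verification of the compactness hypothesis of Shen's theorem via Hopf--Rinow, and your phrasing of contact isotropy via the closed subbundle $\xi\subseteq TM$, are harmless variations on what the paper does.
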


Note that we recover the Lagrangian case when $M$ is symplectic and $k=\frac{1}{2}\dim M$, and the Legendrian case when $M$ is contact and $k=\frac{1}{2}(\dim M-1)$

\begin{pr*}
	Suppose that $M$ is symplectic. Passing to a subsequence, we have diffeomorphisms $\phi_i:L_0\xrightarrow{\sim} L_i$ such that $\{f_i\circ\phi_i\}$ is $C^{1,\alpha'}$-converging to a $C^{1,\alpha}$-immersion $f_0:L_0\looparrowright M$, where $L_0$ is closed and connected. Since
	\begin{align*}
	\delta_H(f_i(L_i),f_0(L_0))=\delta_H(f_i(\phi_i(L_0)),f_0(L_0))\leq d_{C^0}(f_i\circ\phi_i,f_0),
	\end{align*}
	the sequence $\{f_i(L_i)\}$ must also converge to $\overline{f_0(L_0)}$ in the Hausdorff metric~---~recall that the Hausdorff metric is only truly a metric between closed subsets. Therefore, we must have $f_0(L_0)=N$ since $f_0(L_0)$ is compact, and thus closed. Finally, $0=f_i^*\omega$ converges in the $C^{0,\alpha'}$-topology to $f_0^*\omega$. Therefore, $f_0^*\omega=0$, and $f_0$ is isotropic. \par
	
	Suppose now that $M$ is contact with contact form $\alpha$. Then, the proof is analogous to the symplectic case: it suffices to replace $\omega$ by $\alpha$ in the proof above. If $M$ does not have a contact form, i.e.\ if $\xi$ is not coorientable, every point $p$ still has a neighborhood $U_p$ onto which $\xi=\Ker\alpha$. Since $\{f_i\circ\phi_i\}$ and its first order derivatives uniformly converge to $f_0$ on all compact subsets of $f_0^{-1}(U_p)$, the same argument still works.
\end{pr*}

\begin{lem} \label{lem:compact_coiso}
	Let $\{f_i:L_i\looparrowright M\}\subseteq \mathscr{I}_{n+k}(\Lambda,V)$ be a sequence of coisotropic immersions of a $2n$-dimensional symplectic manifold $(M,\omega)$ or of a co-oriented $(2n+1)$-dimensional contact manifold $(M,\xi=\Ker\alpha)$. Suppose that $\{f_i(L_i)\}$ Hausdorff-converges to a closed subset $N$. Then, $N$ is the image of a $(n+k)$-dimensional coisotropic $C^{1,\alpha}$-immersion $f_0:L_0\looparrowright M$, where $L_0$ is closed and connected.
\end{lem}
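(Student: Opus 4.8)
The plan is to mirror the proof of Lemma~\ref{lem:compact_iso}, replacing the isotropy condition ``$f^*\omega=0$'' by the \emph{rank} characterization of coisotropy and exploiting the fact that coisotropic subspaces are exactly those realizing the minimal possible rank. First I would apply Shen's theorem to extract a subsequence, a closed connected manifold $L_0$ of dimension $n+k$, diffeomorphisms $\phi_i:L_0\xrightarrow{\sim}L_i$, and a $C^{1,\alpha}$-immersion $f_0:L_0\looparrowright M$ with $f_i\circ\phi_i\to f_0$ in the $C^{1,\alpha'}$-topology. Exactly as in Lemma~\ref{lem:compact_iso}, the estimate $\delta_H(f_i(L_i),f_0(L_0))\leq d_{C^0}(f_i\circ\phi_i,f_0)$ forces $f_0(L_0)=N$, so it only remains to show that $f_0$ is coisotropic.

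The key observation is that, for an $(n+k)$-dimensional subspace $W$ of a $2n$-dimensional symplectic vector space $(T_pM,\omega)$, one always has $\dim\Ker(\omega|_W)=\dim(W\cap W^\omega)\leq\dim W^\omega=n-k$, so that $\operatorname{rank}(\omega|_W)\geq 2k$, with equality precisely when $W^\omega\subseteq W$, i.e.\ when $W$ is coisotropic. Hence an $(n+k)$-dimensional immersion is coisotropic if and only if $\operatorname{rank}(f^*\omega)\equiv 2k$, the minimal value allowed by linear algebra. Since coisotropy is invariant under reparametrization, $\operatorname{rank}((f_i\circ\phi_i)^*\omega)\equiv 2k$ for each $i$, and because $\omega$ is smooth and $f_i\circ\phi_i\to f_0$ in $C^{1,\alpha'}$, the pullbacks $(f_i\circ\phi_i)^*\omega$ converge to $f_0^*\omega$ in $C^{0,\alpha'}$. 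As rank is lower semicontinuous under such convergence, $\operatorname{rank}(f_0^*\omega)\leq 2k$ pointwise; combined with the automatic lower bound $\operatorname{rank}(f_0^*\omega)\geq 2k$ coming from $\dim L_0=n+k$, this yields $\operatorname{rank}(f_0^*\omega)\equiv 2k$, so $f_0$ is coisotropic.

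For the contact case I would pass to the symplectization $(M\times\R,\,d(e^s\alpha))$, in which $C\subseteq M$ is coisotropic precisely when $C\times\R$ is coisotropic; concretely this amounts to the rank condition $\operatorname{rank}\big(ds\wedge f^*\alpha+f^*d\alpha\big)\equiv 2k$ for the continuous $2$-form on $L_0\times\R$ built from the pullbacks $f^*\alpha$ and $f^*d\alpha$. Since both $(f_i\circ\phi_i)^*\alpha\to f_0^*\alpha$ and $(f_i\circ\phi_i)^*d\alpha\to f_0^*d\alpha$ in $C^{0,\alpha'}$ (these being pullbacks of the fixed smooth forms $\alpha$ and $d\alpha$ by $C^{1,\alpha'}$-convergent maps), the packaged $2$-form converges as well, and the same lower-semicontinuity-plus-automatic-lower-bound argument applies verbatim, now with the $(n+k+1)$-dimensional tangent space $T(C\times\R)$ inside the $(2n+2)$-dimensional symplectization. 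The non-coorientable case is handled locally, exactly as in Lemma~\ref{lem:compact_iso}.

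The main obstacle, and the one point that genuinely differs from the isotropic lemma, is that coisotropy is not a closed condition given by the vanishing of a tensor but a rank condition, and rank is only lower semicontinuous: a priori the rank of $(f_i\circ\phi_i)^*\omega$ could drop in the limit, producing an extra isotropic direction. What saves the argument is precisely that coisotropic subspaces sit at the minimal admissible rank $2k$, so the purely linear-algebraic lower bound $\operatorname{rank}(f_0^*\omega)\geq 2k$~---~valid for \emph{any} $(n+k)$-dimensional immersed submanifold~---~prevents this drop and pins the rank at $2k$. Setting up the correct continuous tensor in the contact case (via the symplectization) so that this same mechanism applies uniformly, without separately tracking transversality of $C$ to $\xi$ or tangency to the Reeb field, is the other delicate bookkeeping point.
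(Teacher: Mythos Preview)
Your symplectic argument is essentially identical to the paper's: the paper phrases the key step as ``$\dim\Ker\sigma_{0,x}\geq n-k$ by lower semicontinuity of rank, and $\dim\Ker\sigma_{0,x}\leq n-k$ because $f_0$ is an immersion and $\omega$ is nondegenerate'', which is the kernel-dimension dual of your rank formulation; the two are trivially equivalent.

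In the contact case your route genuinely differs. The paper works intrinsically: it observes that contact coisotropy is the condition $\dim\Ker\big(d\alpha|_{f_*(T_xL)\cap\xi}\big)=n-k$, notes that the hyperplane $\xi_{f_i(\phi_i(x))}$ converges to $\xi_{f_0(x)}$ because the maps $C^1$-converge, and then runs the same semicontinuity-plus-lower-bound argument on these varying subspaces. Your symplectization trick is a cleaner packaging: it reduces the contact case to the already-proved symplectic statement in one stroke and, as you note, sidesteps any case analysis on whether $T_pC$ is tangent to $\xi$ or contains the Reeb direction. The paper's direct approach, by contrast, avoids introducing the auxiliary manifold $M\times\R$ and keeps everything in the original contact geometry. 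Both are short and correct. One small remark: the lemma as stated already assumes $\xi$ is co-oriented, so your closing sentence about the non-coorientable local reduction is unnecessary here (that maneuver appears in the paper only in Lemma~\ref{lem:compact_iso}).
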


We recall that $f:L\looparrowright M$ is (symplectic) coisotropic if $(f_*(T_xL))^\omega\subseteq f_*(T_xL)$ for all $x\in L$, where 
\begin{align*}
V^\omega:=\{w\in T_yM\ |\ \omega_y(w,v)=0,\ \forall v\in V\}
\end{align*}
is the symplectic complement of a vector space $V\subseteq T_yM$. Following Huang~\cite{Huang2015}, we then say that $f:L\looparrowright (M,\xi=\Ker\alpha)$ is (contact) coisotropic\index{sous-variété contact coisotrope} if
\begin{align*}
\left(f_*(T_xL)\cap\xi_{f(x)}\right)^{d\alpha}\subseteq f_*(T_xL)\cap\xi_{f(x)}
\end{align*}
for all $x\in L$. Note that this definition depends only on $\xi$, not on the precise contact form $\alpha$ chosen. \par

\begin{pr*}
	Suppose that $M$ is symplectic. As in Lemma \ref{lem:compact_iso}, we have $\{\phi_i\}$ and $f_0=\lim_{C^{1,\alpha'}} (f_i\circ\phi_i)$ such that $f_0(L_0)=N$. Note that $f_i$ being coisotropic is equivalent to $\sigma_i:=(f_i\circ\phi_i)^*\omega$ having kernel
	\begin{align*}
	\Ker\sigma_{i,x}:=\{v\in T_xL_0\ |\ \sigma_{i,x}(v,w)=0,\ \forall w\in T_xL_0\}
	\end{align*}
	of dimension $n-k$ for all $x\in L_0$. As before, we have $C^{0,\alpha'}$-convergence of $\{\sigma_i\}$ to the $C^{0,\alpha}$-form $\sigma_0=f_0^*\omega$. Since the rank of matrices is lower semicontinuous, we must have $\dim\Ker\sigma_{0,x}\geq\dim\Ker\sigma_{i,x}=n-k$ for all $x\in L_0$. However, since $f_0$ is an immersion and $\omega$ is nondegenerate, $\dim\Ker\sigma_{0,x}\leq n-k$. Therefore, $f_0$ is coisotropic. \par
	
	When $M$ is contact, the proof is analogous, but the condition is instead that $\Ker\sigma_{i,x}|_{\xi_{f_i(\phi_i(x))}}$ has dimension $n-k$ for all $x\in L$. Since $\{f_i\circ\phi_i\}$ converges to $f_0$ in the $C^{1,\alpha'}$ topology, the $2n$-plane $\xi_{f_i(\phi_i(x))}$ converges to $\xi_{f_0(x)}$. Therefore, the proof goes through as before. 
\end{pr*}

\begin{rem} \label{rem:noncompact_symp}
	The proof of Lemma \ref{lem:compact_coiso} showcases well why the equivalent statement for symplectic submanifolds~---~or contact submanifolds~---~does not hold: the limit immersion might develop some degeneracy. For example, a generic nonsymplectic perturbation of the zero section  of $T^*L$ will be symplectic, even though the zero section itself is of course Lagrangian.
\end{rem}

We now go back to the symplectic isotropic case and show some additional rigidity when additional conditions are imposed on the immersions.

\begin{lem} \label{lem:compact_exact}
	If the $f_i$'s of Lemma \ref{lem:compact_iso} are exact for some Liouville form $\lambda$ on a symplectic manifold $(M,\omega=d\lambda)$, then the immersion $f_0:L_0\looparrowright M$ may be chosen so that there is a $C^{1,\alpha}$ function $h_0:L_0\to\R$ with $f_0^*\lambda=dh_0$.
\end{lem}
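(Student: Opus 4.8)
The plan is to transport the primitives $h_i$ of the $f_i$'s to the fixed domain $L_0$ via the diffeomorphisms $\phi_i$ supplied by Lemma~\ref{lem:compact_iso}, and then pass to the limit. First I would recall from the proof of Lemma~\ref{lem:compact_iso} that, after passing to a subsequence, there are diffeomorphisms $\phi_i:L_0\xrightarrow{\sim}L_i$ with $\{f_i\circ\phi_i\}$ converging in $C^{1,\alpha'}$ to the isotropic $C^{1,\alpha}$-immersion $f_0$ satisfying $f_0(L_0)=N$; in particular $f_0^*\lambda$ is a \emph{closed} $C^{0,\alpha}$-form, since $d(f_0^*\lambda)=f_0^*\omega=0$. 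Exactness of the $f_i$'s means $f_i^*\lambda=dh_i$ for some $h_i:L_i\to\R$, and setting $\tilde h_i:=h_i\circ\phi_i$ gives $(f_i\circ\phi_i)^*\lambda=\phi_i^*(f_i^*\lambda)=d\tilde h_i$.

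Next, since $\lambda$ is a fixed smooth $1$-form and $f_i\circ\phi_i\to f_0$ in $C^{1,\alpha'}$, the pulled-back $1$-forms converge: $d\tilde h_i=(f_i\circ\phi_i)^*\lambda\to f_0^*\lambda$ in the $C^{0,\alpha'}$-topology, and in particular uniformly. The subtlety~---~the one step that genuinely uses the exactness hypothesis~---~is that so far only the \emph{differentials} $d\tilde h_i$ are controlled, while the primitives $\tilde h_i$ are defined only up to an additive constant, and these constants could a priori run off to infinity. I would fix this by normalizing, say $\tilde h_i(x_0)=0$ at a chosen basepoint $x_0\in L_0$. Then for any loop $c$ based at $x_0$ one has $\int_c f_0^*\lambda=\lim_i\int_c d\tilde h_i=0$, so the closed form $f_0^*\lambda$ has vanishing periods and is therefore exact: $f_0^*\lambda=dh_0$ for some continuous $h_0$ with $h_0(x_0)=0$.

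Equivalently, and more concretely, I would produce $h_0$ as the uniform limit of the $\tilde h_i$: integrating $d\tilde h_i$ along paths issuing from $x_0$ shows that $\{\tilde h_i\}$ is Cauchy in $C^0$ (because $\{d\tilde h_i\}$ is uniformly Cauchy on the compact manifold $L_0$), and the standard theorem on uniform convergence of functions with uniformly convergent derivatives then yields a $C^1$-limit $h_0$ with $dh_0=\lim_i d\tilde h_i=f_0^*\lambda$. Finally, regularity is automatic: since $f_0$ is of class $C^{1,\alpha}$ and $\lambda$ is smooth, the form $dh_0=f_0^*\lambda$ is of class $C^{0,\alpha}$, whence $h_0\in C^{1,\alpha}$, as required. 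The only real obstacle is the bookkeeping of the constants of integration just described; everything else is a routine passage to the limit.
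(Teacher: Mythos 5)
Your proof is correct, and the overall strategy coincides with the paper's: pull the primitives back to the fixed domain via the $\phi_i$, normalize them at a basepoint, and control them by integrating their differentials along paths. The one genuine difference is the limiting mechanism. The paper observes that the normalized $\bar h_i:=h_i\circ\phi_i$ are uniformly bounded in $C^{1,\alpha}$ and invokes compactness of closed balls of $C^{1,\alpha}$ in the $C^{1,\alpha'}$-topology to extract a further subsequence converging to a $C^{1,\alpha}$ limit $h_0$, then passes to the limit in $(\bar f_i)^*\lambda=d\bar h_i$. You instead exploit that the differentials $d\tilde h_i=(f_i\circ\phi_i)^*\lambda$ actually \emph{converge} uniformly to $f_0^*\lambda$ (not merely that they are bounded), so the classical theorem on uniform convergence of derivatives produces a $C^1$ limit $h_0$ of the whole normalized sequence with $dh_0=f_0^*\lambda$, and the $C^{1,\alpha}$ regularity of $h_0$ is then read off a posteriori from this identity since $f_0\in C^{1,\alpha}$ and $\lambda$ is smooth. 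Your route buys two small things: no additional subsequence extraction is needed, and the H\"older structure enters only through the regularity of $f_0$ rather than through a compactness argument in H\"older spaces; the paper's route has the advantage of fitting into the same Arzel\`a--Ascoli template used throughout via Shen's theorem. Both the vanishing-of-periods argument and the regularity bootstrap you give are sound.
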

\begin{pr*}
	By hypothesis, each $L_i$ has a function $h_i:L_i\to\R$ such that $f_i^*\lambda=dh_i$. These functions are unique up to a constant. Therefore, we may fix $x\in L_0$ and take $h_i$ such that $h_i(\phi_i(x))=0$, where the $\phi_i$'s are the diffeomorphisms of Shen's theorem. Take $\bar{f}_i:=f_i\circ\phi_i$ and $\bar{h}_i:=h_i\circ\phi_i$, so that $(\bar{f}_i)^*\lambda=dh'_i$. In particular, the first order derivatives of $\bar{h}_i$ are uniformly $C^{0,\alpha'}$-bounded. Furthermore, for any $y\in L_0$, we have that
	\begin{align*}
	\big|\bar{h}_i(y)\big|
	&= \big|\bar{h}_i(y)-\bar{h}_i(x)\big| \\
	&= \left| \int_0^{d_{L_0}(x,y)}(d\bar{h}_i)_{\gamma(t)}\left(\dot{\gamma}(t)\right)dt\right| \\
	&\leq \big|\big|d\bar{h}_i\big|\big| \int_0^{d_{L_0}(x,y)}|\dot{\gamma}(t)|dt \\
	&\leq \mathrm{Diam}(L_0)\big|\big|d\bar{h}_i\big|\big|,
	\end{align*}
	where $\gamma$ is a unit-speed minimizing geodesic segment from $x$ to $y$, and $||\cdot||$ denotes here the supremum over $L_0$ of the operator norm. Since $||d\bar{h}_i||$ is uniformly bounded, the image of all $\bar{h}_i$'s is contained in some compact interval $I$. Therefore, $\{h_i\}$ is contained in a finite closed ball in $C^{1,\alpha'}(L_0,I)$ for $\alpha'<\alpha$. By compactness of this ball in the $C^{1,\alpha'}$ topology, we may pass to another subsequence which $C^{1,\alpha'}$-converges to a $C^{1,\alpha}$-function $h_0:L_0\to\R$. Taking the limit of $(\bar{h}_i)^*\lambda=d\bar{h}_i$ on both sides, we have the relation $f_0^*\lambda=dh_0$.
\end{pr*}

This proof shows the importance of working with H\"older spaces: if we only had uniform $C^0$-bounds on $d\bar{h}_i$, then we would only know that $h_0$ is continuous, and that $\{\bar{h}_i\}$ uniformly converges to $h_0$. In particular, the relation $f_0^*\lambda=dh_0$ would not necessarily hold. \par

We now turn our attention to sequences of weakly exact or monotone Lagrangian submanifolds. Contrary to what preceded, these results employ results from Section \ref{sec:sequences} in an essential way. \par

\begin{prop} \label{prop:compact_monotone}
	If the $f_i$'s of Lemma \ref{lem:compact_iso} are weakly exact Lagrangian embeddings or monotone Lagrangian embeddings with uniform monotonicity constant $\rho>0$, then so is $f_0$ whenever it is an embedding.
\end{prop}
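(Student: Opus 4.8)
The plan is to show that weak exactness and monotonicity are closed conditions under the $C^1$-convergence furnished by Shen's theorem, once the limit is an embedding. First I would invoke Lemma~\ref{lem:compact_iso} to obtain, after passing to a subsequence, diffeomorphisms $\phi_i\colon L_0\xrightarrow{\sim}L_i$ with $\bar f_i:=f_i\circ\phi_i$ converging to $f_0$ in $C^{1,\alpha'}$. Since $f_0$ is assumed to be an embedding, the covering produced by Proposition~\ref{prop:reparam} is trivial, so for $i$ large the image $f_i(L_i)$ lies in a Weinstein neighborhood $U$ of $f_0(L_0)$ and is there the graph of a $1$-form $\sigma_i$ on $L_0$, closed because $f_i$ is Lagrangian; identifying $U$ symplectomorphically with a neighborhood of the zero section in $T^*L_0$, the submanifold $f_0(L_0)$ becomes the zero section. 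As $\bar f_i\to f_0$ in $C^1$ we have $\sigma_i\to 0$ in $C^1$, and hence $[\sigma_i]\to 0$ in $H^1(L_0;\R)$, since its pairing with any fixed class in $H_1(L_0)$ equals $\int\sigma_i\to 0$.

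Next I would compare the symplectic-area and Maslov homomorphisms of $f_0$ and of $f_i$ through the graph isotopy $L^t:=\operatorname{graph}(t\sigma_i)$, $t\in[0,1]$, a Lagrangian isotopy from the zero section $f_0(L_0)$ to $f_i(L_i)$. It induces an isomorphism $\pi_2(M,f_0(L_0))\cong\pi_2(M,f_i(L_i))$, sending a class represented by $u$ to the class $u_i$ obtained by gluing to $u$ the annulus swept out by $\del u$ along the isotopy. The Maslov index is invariant under Lagrangian isotopy, so $\mu(u_i)=\mu(u)$. The symplectic area changes only by the flux of the isotopy: a direct Stokes computation in the chart, using $\lambda|_{L^t}=t\sigma_i$ and $\lambda|_{\text{zero section}}=0$, gives
\begin{align*}
A_{f_i}(u_i)=A_{f_0}(u)+\langle[\sigma_i],[\del u]\rangle,
\end{align*}
where $A$ denotes the symplectic-area homomorphism and $[\del u]\in H_1(L_0;\R)$ is the boundary class.

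Finally I would pass to the limit. If the $f_i$ are weakly exact, then $A_{f_i}\equiv 0$, so $A_{f_0}(u)=-\langle[\sigma_i],[\del u]\rangle$ for all large $i$; letting $i\to\infty$ and using $[\sigma_i]\to 0$ forces $A_{f_0}(u)=0$. If instead the $f_i$ are monotone with constant $\rho$, then $A_{f_i}(u_i)=\rho\,\mu(u_i)=\rho\,\mu(u)$, whence $A_{f_0}(u)=\rho\,\mu(u)-\langle[\sigma_i],[\del u]\rangle\to\rho\,\mu(u)$, so $A_{f_0}(u)=\rho\,\mu(u)$. As $u\in\pi_2(M,f_0(L_0))$ was arbitrary, $f_0$ is respectively weakly exact, or monotone with the same constant $\rho$.

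The routine points here are the Stokes computation of the flux and the standard invariance of the Maslov index under Lagrangian isotopy. The one genuine technical obstacle is the low regularity of $f_0$: one must justify both the graph representation and the area/Maslov bookkeeping when the limiting Lagrangian is only $C^{1,\alpha}$. I expect this to be circumvented cleanly by carrying out the entire comparison inside a \emph{smooth} Weinstein neighborhood of some fixed $f_j(L_j)$ with $j$ large~---~over which $f_0(L_0)$ and all nearby $f_i(L_i)$ simultaneously appear as graphs of $C^1$-small closed $1$-forms~---~so that every cohomological computation takes place on a fixed smooth manifold and the convergence $[\sigma_i]\to[\sigma_\infty]$ (the class of $f_0$) still drives the conclusion.
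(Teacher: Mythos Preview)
Your argument is correct and follows essentially the same route as the paper: both transfer disk classes between $f_i(L_i)$ and $f_0(L_0)$ by gluing the annulus swept out in a tubular neighborhood and then let its symplectic area tend to zero, with Maslov invariance handled identically. Your flux packaging and your care about the $C^{1,\alpha}$ regularity of $f_0$ (working instead over a smooth $f_j(L_j)$) are mild refinements; the paper, conversely, keeps track of a possibly nontrivial covering $p_i\colon L_i\to L_0$ and uses the $\gamma^k$ trick for boundaries that do not lift~---~a case that is in fact vacuous under the hypothesis that $f_0$ is an embedding, exactly as you observe.
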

\begin{pr*}
	For ease of notation, we will identify $f_i(L_i)$ with $L_i$, $i\geq 0$, and see $f_i$ simply as an inclusion. Suppose that the $L_i$'s are weakly exact. By Proposition \ref{prop:reparam}, there are finite coverings $p_i:L_i\to L_0$ such that $f_i$ may be $C^{1,\alpha'}$-approximated by $f_0\circ p_i$. In fact, by the proof of the proposition, we may pass to a subsequence so that the isomorphism type of $p_i$ is constant. Let $u:\D\to M$ be a disk with boundary along $L_0$ and symplectic area $\omega(u)$. \par
	
	Suppose that $\gamma:=u|_{\del\D=S^1}$ admits a lift $\tilde{\gamma}_i$ to $L_i$~---~since the isomorphism type of the covering is constant, this is independent of $i$. Let $v_i:S^1\times [0,1]\to M$ be a cylinder such $v_i|_{S^1\times\{0\}}=\tilde{\gamma}_i$ and $v_i|_{S^1\times\{1\}}=\gamma$ which is contained in the tubular neighborhood about $L_0$ of size $s(L_i;L_0)$. For example, we could take $v_i(t,s)=(1-s)\tilde{\gamma}_i(t)$ in a Weinstein neighborhood of $L_0$. But then, the concatenation $v_i\#u$ is a disk with boundary in $L_i$, which implies that
	\begin{align*}
	0=\omega(v_i\#u)=\omega(v_i)+\omega(u).
	\end{align*}
	However, we have that $\omega(v_i)\to 0$. To see this, we could for example equip $M$ with a metric which corresponds with the Sasaki metric on the previously-mentioned Weinstein neighborhood of $L_0$. Then, we get $\omega(v_i)\leq\mathrm{Area}(v_i)$, which obviously tends to 0. Therefore, we must have $\omega(u)=0$.
	
	If $\gamma$ does not admit a lift, there is some $k\geq 2$ such that $\gamma^k=u^k|_{S^1}$ does. Indeed, $p_i:L_i\to L_0$ is a finite covering, and thus $p_*(\pi_1(L_i))$ has finite index. But then,
	\begin{align*}
	0=\omega(u^k)=k\omega(u),
	\end{align*}
	which gives the result. \par
	
	Suppose now the $L_i$'s are monotone with uniform monotonicity constant $\rho>0$, i.e.\ $\omega=\rho\mu$, where $\mu:\pi_2(M,L_i)\to\Z$ is the Maslov index of $L_i$. The proof then goes through similarly as before. Indeed, when there is a lift $\tilde{\gamma}_i$, then we must have $\mu(v_i\# u)=\mu(u)$. This is because $\mu(v)$ depends only on the homotopy class of the path $t\mapsto T_{v(e^{it})}L\subseteq\R^{2n}$ in the Lagrangian Grassmannian. However, $v_i$ gives precisely a homotopy from the path associated to $u$ to the one associated to $v_i\# u$. Therefore, we have that
	\begin{align*}
	\rho\mu(u)=\rho\mu(v_i\# u)=\omega(v_i\# u)=\omega(v_i)+\omega(u).
	\end{align*}
	This again gives the result since $\omega(v_i)\to 0$. When there is no lift, the result also follows similarly as before:
	\begin{align*}
	k\rho\mu(u)=\rho\mu(u^k)=\omega(u^k)=k\omega(u),
	\end{align*}
	and $k>0$.
\end{pr*}

Note that we have everything we needed to prove Theorem~\ref{thm:eliashberg-gromov} and Corollary~\ref{cor:hausdorff_to_hofer}. \par

\begin{proof}[Proof of Theorem~\ref{thm:eliashberg-gromov}]
	The statement on isotropic~---~and thus Lagrangian and Legendrian~---~immersions is Lemma~\ref{lem:compact_iso}, and the one on coisotropic immersions is Lemma~\ref{lem:compact_coiso}. The statement on exact Lagrangian immersions is Lemma~\ref{lem:compact_exact}, and the one on weakly exact and monotone Lagrangian embeddings is Proposition~\ref{prop:compact_monotone}.
\end{proof}

\begin{proof}[Proof of Corollary~\ref{cor:hausdorff_to_hofer}]
	Using a Weinstein neighborhood, we may assume without loss of generality that $L_i$ is an exact Lagrangian submanifold in $T^*L_0$, and the $f_i$'s are simply inclusions. Then, $L_i$ Hausdorff-converges to the zero section, and $f_0$ is the natural inclusion $L_0\hookrightarrow T^*L_0$. By Theorem \ref{thm:section}, $L_i$ is the graph of some exact 1-form $dh_i$ for $i$ large enough. \par
	
	Consider $H_i:=\beta \pi^*h_i$, where $\pi:T^*L_0\to L_0$ is the natural projection, and $\beta$ is a bump function equal to 1 in some codisk bundle $D^*_rL_0$ containing all $L_i$ for $i$ large and equal to 0 outside some other codisk bundle $D^*_RL_0$. Then, the Hamiltonian diffeomorphism that it generates sends $L_0$ to $L_i$. Therefore,
	\begin{align*}
	d_H(L_0,L_i)\leq \max_{x\in T^*L_0} H_i(x)-\min_{y\in T^*L_0} H_i(y)\leq \max_{x\in L_0} h_i(x)-\min_{y\in L_0} h_i(y)
	\end{align*}
	by the definition of the Lagrangian Hofer metric. By compactness of $L_0$, there are points $x_i,y_i\in L_0$ where $h_i$ attains its maximum and minimum, respectively. We may then take a unit-speed minimizing geodesic $\gamma_i$ from $x$ to $y$. Then,
	\begin{align*}
	\max_{x\in L_0} h_i(x)-\min_{y\in L_0} h_i(y)
	&= \left|\int_0^{d_{L_0}(x_i,y_i)}(dh_i)_{\gamma_i(t)}(\dot{\gamma}_i(t))dt\right| \\
	&\leq ||dh_i|| \int_0^{d_{L_0}(x_i,y_i)}|\dot{\gamma}_i(t)|dt \\
	&\leq \mathrm{Diam}(L_0)||dh_i||.
	\end{align*}
	However, $||dh_i||=s(L_i;L_0)$ when $T^*L_0$ is equipped with the Sasaki metric, because $L_i=\operatorname{graph}dh_i$. Since convergence in the Hausdorff metric is independent on the distance function, and since we know that $\delta_H(L_i,L_0)\to 0$ in some distance function, then $||dh_i||\to 0$. Therefore, $d_H(L_0,L_i)\to 0$.
\end{proof}

\begin{rem} \label{rem:hausdorff_to_hofer}
	\hspace{2em}
	\begin{enumerate}[label=(\arabic*)]
		\item Following Remark \ref{rem:section-vs-covering}, there is an analogous statement for immersions if we instead consider $L_0$ in its normal bundle in $f_0^*TM$. Then, a neighborhood of $L_0$ can be identified with a neighborhood of the zero section of $T^*L_0$ using an $\omega$-compatible almost complex structure.
		\item In light of the rigidity of the Hofer metric for coisotropic submanifolds proved by Usher~\cite{Usher2014}, we expect that a similar result also holds for them under adequate conditions. Said conditions are however unclear for the time being.
		\item Likewise, we expect a similar result for Legendrian submanifolds~---~or more generally contact coisotropic submanifolds~---~with the Shelukhin-Hofer pseudometric\index{pseudométrique de Shelukhin-Hofer} as defined by Rosen and Zhang~\cite{RosenZhang2018}, based on the metric on contactomorphisms of Shelukhin~\cite{Shelukhin2017}.
		\item Note that that Lemma \ref{cor:hausdorff_to_hofer} implies that $\{f_i(L_i)\}$ also converges to $L_0$ in the spectral metric. However, we could have gotten this result directly from Corollary \ref{cor:conjectures} via a rescaling argument \textit{\`a la} Shelukhin~\cite{Shelukhin2018}.
	\end{enumerate}
\end{rem}

\subsection{The tame and bounded volume conditions} \label{subsec:tame+volume}
In this subsection, we explore the relation between the $\epsilon$-tameness condition of the author's previous work~\cite{Chasse2021} and the condition of having bounded volume. \par

We recall that a submanifold $N$ of a Riemannian manifold $(M,g)$ is said to be $\epsilon$-tame\index{$\epsilon$-tame}\index{$\epsilon$-dominé}, $0<\epsilon\leq 1$, if
\begin{align*}
\frac{d_M(x,y)}{\min\{1,d_N(x,y)\}}\geq\epsilon
\end{align*}
for all $x\neq y\in N$. Here, $d_M$ denotes the metric on $M$ induced by the Riemannian metric $g$, whilst $d_N$ denotes the metric on $N$ induced by the restriction $g|_{TN}$ of $g$ to $N$. \par

\begin{prop} \label{prop:bounded_volume}
	Take $\Lambda\geq 0$ and $\epsilon\in (0,1]$. Let $K\subseteq M$ be compact. There exists a constant $V=V(\Lambda,\epsilon,k,K)>0$ such that whenever $N$ is a (closed connected) $\epsilon$-tame $k$-dimensional submanifold in $K$ with $||B_N||\leq\Lambda$, then the inclusion $N\hookrightarrow M$ is in $\mathscr{I}_k(\Lambda,V)$.
\end{prop}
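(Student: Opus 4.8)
The plan is to observe that, since membership in $\mathscr{I}_k(\Lambda,V)$ for the inclusion of a $k$-dimensional submanifold $N$ with $\|B_N\|\leq\Lambda$ reduces to the single remaining condition $\Vol(N)\leq V$, the entire content of the statement is a uniform volume bound. I would obtain it by a covering argument: cover the compact set $K$ by finitely many ambient balls of a fixed small radius, bound the volume of $N$ inside each such ball, and sum. The two ingredients that make each local bound uniform are (a) $\epsilon$-tameness, which forces $N$ to meet a small ambient ball in a single intrinsic ball, and (b) the bound on the second fundamental form, which controls the intrinsic geometry of $N$ and lets me invoke volume comparison.

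The key step is the following \emph{single sheet} consequence of tameness. Fix $r<\epsilon$ and let $x\in N$. If $y\in N$ satisfies $d_M(x,y)<r$, then $d_M(x,y)<\epsilon$ rules out $d_N(x,y)\geq 1$ (otherwise $\min\{1,d_N(x,y)\}=1$ and tameness would force $d_M(x,y)\geq\epsilon$); hence $\min\{1,d_N(x,y)\}=d_N(x,y)$, and tameness yields $d_N(x,y)\leq d_M(x,y)/\epsilon<r/\epsilon$. In other words,
\begin{align*}
N\cap B^M(x,r)\subseteq B^N(x,r/\epsilon),
\end{align*}
so a small extrinsic ball captures only a single intrinsic ball of controlled radius. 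To bound the volume of that intrinsic ball uniformly, I use the Gauss equation: on the compact set $K$ the sectional curvature of $M$ is bounded, so together with $\|B_N\|\leq\Lambda$ it bounds the sectional—hence Ricci—curvature of $N$ from below by a constant $(k-1)\kappa$ with $\kappa=\kappa(\Lambda,K)$. Bishop--Gromov volume comparison then gives $\Vol\big(B^N(x,\rho)\big)\leq V^k_\kappa(\rho)$, the volume of the $\rho$-ball in the $k$-dimensional space form of curvature $\kappa$, for every $x\in N$ and every $\rho>0$, with no injectivity-radius hypothesis needed.

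Finally I would assemble these. Take $r=\epsilon/2$ and cover $K$ by balls $B^M(p_1,r/2),\dots,B^M(p_P,r/2)$ with $p_j\in K$, where $P=P(\epsilon,K)<\infty$ by compactness. For each $j$ with $N\cap B^M(p_j,r/2)\neq\emptyset$, pick $x_j\in N$ in that ball; then $N\cap B^M(p_j,r/2)\subseteq N\cap B^M(x_j,r)\subseteq B^N(x_j,1/2)$, whose volume is at most $V^k_\kappa(1/2)$. Summing over the at most $P$ relevant balls gives
\begin{align*}
\Vol(N)\leq P\cdot V^k_\kappa(1/2)=:V,
\end{align*}
a constant depending only on $\Lambda,\epsilon,k,K$, as required. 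The main obstacle is really the single-sheet containment: it is exactly what prevents $N$ from accruing unbounded volume by folding back on itself within $K$ (a bounded-curvature curve can still spiral indefinitely inside a fixed disk), and it is the only place where tameness—rather than standard comparison geometry—enters the argument.
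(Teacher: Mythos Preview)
Your proof is correct, but it takes a genuinely different route from the paper's. The paper first bounds the \emph{diameter} of $N$: it picks a unit-speed minimizing geodesic realizing $\mathrm{Diam}(N)=T$, marks points $x_0,\dots,x_{\lfloor T\rfloor}$ at intrinsic distance $\geq 1$ from one another, uses tameness to force $d_M(x_i,x_j)\geq\epsilon$, and then runs a packing argument---disjoint ambient balls around the $x_i$ inside a fixed neighborhood of $K$---together with a \emph{lower} volume comparison (this is where Shen's injectivity-radius estimate enters) to cap $\lfloor T\rfloor$. Only afterwards does Bishop--Gromov convert the diameter bound into a volume bound.

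You instead bound the volume directly with a covering argument, using tameness in its ``single-sheet'' form $N\cap B^M(x,r)\subseteq B^N(x,r/\epsilon)$ and only the \emph{upper} Bishop--Gromov inequality on intrinsic balls. This is more elementary: you avoid both the diameter detour and the appeal to Shen's injectivity-radius bound. The paper's approach, in exchange, yields a uniform diameter bound on $N$ as an intermediate byproduct, which may be of independent interest; yours does not produce this, but for the stated proposition it is not needed.
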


We leave the proof of the proposition for later and give an application of the result when combined with Theorem \ref{thm:eliashberg-gromov}. In order to do this, we recall the notion central to the author's previous work~\cite{Chasse2021}: Chekanov-type metrics. Broadly speaking, these are metrics $\hat{d}^{\mathscr{F},\mathscr{F'}}$ defined on collections of Lagrangian submanifolds using auxiliary families $\mathscr{F}$ and $\mathscr{F'}$ and behaving well with regards to $J$-holomorphic curves. More precisely, they are defined by the property that for any compatible almost complex struction $J$, any pair of Lagrangian submanifolds $L$, $L'$ in the collection, we have~---~up to arbitrarily small Hamiltonian perturbations~---~for any $x\in L\cup L'$,
\begin{align*}
\hat{d}^{\mathscr{F},\mathscr{F'}}(L,L')\leq \omega(u)
\end{align*}
for some $J$-holomorphic curve $u$ with boundary along $L$, $L'$ and elements of $\mathscr{F}$ or $\mathscr{F'}$, and which passes through $x$. Most notably, examples of Chekanov-type metric include the Lagrangian Hofer metric (with $\mathscr{F}=\mathscr{F'}=\emptyset$), the spectral metric (also with $\mathscr{F}=\mathscr{F'}=\emptyset$), and the shadows metrics. \par

\begin{cor} \label{cor:equiv_topology}
	Let $\hat{d}^{\mathscr{F},\mathscr{F'}}$ be a Chekanov-type metric which is bounded from above by the Lagrangian Hofer metric $d_H$. Then, for any compact $K\subseteq M$, $\hat{d}^{\mathscr{F},\mathscr{F'}}$ induces the same topology on $\mathscr{L}^e_{\Lambda,\epsilon}(K)$ as the Hausdorff metric. \par
	
	If $V>0$, then the same result holds on the subset of $\mathscr{L}^e_{\Lambda,\epsilon}(M)$ composed of Lagrangian submanifolds having volume bounded from above by $V$, whether $M$ is compact or not.
\end{cor}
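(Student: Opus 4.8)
The plan is to deduce Corollary~\ref{cor:equiv_topology} by combining the ``one direction'' already established in the author's previous work~\cite{Chasse2021} with the new ``reverse direction'' provided by Corollary~\ref{cor:hausdorff_to_hofer}. Recall that on a suitable collection of Lagrangian submanifolds, two metrics induce the same topology precisely when they have the same convergent sequences (with the same limits). Since $\hat{d}^{\mathscr{F},\mathscr{F'}}\leq d_H$, convergence in $d_H$ implies convergence in $\hat{d}^{\mathscr{F},\mathscr{F'}}$; and by the main theorem of~\cite{Chasse2021}, convergence in $\hat{d}^{\mathscr{F},\mathscr{F'}}$ (for a Riemannianly-bounded, $\epsilon$-tame sequence) implies Hausdorff convergence to the same limit. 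Thus one inclusion of topologies is free. The genuinely new content is the converse: a Hausdorff-convergent sequence in $\mathscr{L}^e_{\Lambda,\epsilon}(K)$ must converge in $\hat{d}^{\mathscr{F},\mathscr{F'}}$ to the same limit.

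First I would fix notation and reduce to sequences: since all the metrics in play are first-countable, equality of induced topologies is equivalent to agreement of convergent sequences. Let $\{L_i\}\subseteq\mathscr{L}^e_{\Lambda,\epsilon}(K)$ Hausdorff-converge to $L_0\in\mathscr{L}^e_{\Lambda,\epsilon}(K)$. The key step is to promote the hypotheses on $\mathscr{L}^e_{\Lambda,\epsilon}(K)$ into membership of the inclusions in a fixed $\mathscr{I}_n(\Lambda,V)$: this is exactly Proposition~\ref{prop:bounded_volume}, which converts the bound $\|B_N\|\leq\Lambda$ together with $\epsilon$-tameness and containment in the compact set $K$ into a uniform volume bound $V=V(\Lambda,\epsilon,n,K)$. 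With that uniform bound in hand, the sequence $\{L_i\hookrightarrow M\}$ lies in a single $\mathscr{I}_n(\Lambda,V)$, so Corollary~\ref{cor:hausdorff_to_hofer} applies (after passing to a Weinstein neighborhood of $L_0$ as in its proof) and yields $d_H(L_0,L_i)\to 0$. Since $\hat{d}^{\mathscr{F},\mathscr{F'}}\leq d_H$, we conclude $\hat{d}^{\mathscr{F},\mathscr{F'}}(L_0,L_i)\to 0$ as well. This closes the loop and establishes that Hausdorff convergence and $\hat{d}^{\mathscr{F},\mathscr{F'}}$-convergence coincide on $\mathscr{L}^e_{\Lambda,\epsilon}(K)$.

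For the second paragraph of the statement, where $K$ is replaced by an explicit volume bound $V$ but $M$ need not be compact, the argument is essentially the same but one no longer invokes Proposition~\ref{prop:bounded_volume} to produce $V$; instead $V$ is assumed directly, so the inclusions already lie in $\mathscr{I}_n(\Lambda,V)$ by hypothesis. The only delicate point is that Corollary~\ref{cor:hausdorff_to_hofer} and Shen's theorem require the images to sit inside a compact subset of $M$; here I would note that Hausdorff convergence of $\{L_i\}$ to a compact limit $L_0$, together with the $\epsilon$-tameness controlling how the intrinsic and extrinsic metrics compare, confines $\bigcup_i L_i$ to a bounded (hence, by completeness of $(M,g)$ and the volume bound, relatively compact) neighborhood of $L_0$, so the compactness hypothesis is met and the same chain of implications goes through.

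The main obstacle I anticipate is not any single hard estimate but rather the bookkeeping needed to verify that the collection $\mathscr{L}^e_{\Lambda,\epsilon}(K)$ genuinely satisfies the running hypotheses of both cited results simultaneously~---~in particular, that the $\epsilon$-tameness condition is exactly what feeds the previous-work direction (guaranteeing Hausdorff convergence out of $\hat{d}^{\mathscr{F},\mathscr{F'}}$-convergence) while the curvature and volume bounds feed Corollary~\ref{cor:hausdorff_to_hofer}. The subtlety is that Corollary~\ref{cor:hausdorff_to_hofer} is phrased for sequences \emph{already known} to Hausdorff-converge to a smooth exact Lagrangian, so one must confirm that the limit $L_0\in\mathscr{L}^e_{\Lambda,\epsilon}(K)$ is smooth and exact and that ``exactness'' here matches the Liouville-form exactness required there; this is where I would lean on the remark, made just after Corollary~\ref{cor:hausdorff_to_hofer} and formalized in Lemma~\ref{lem:compact_exact}, that exactness of the limit is automatic. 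Once these compatibility checks are made explicit, the proof is a short concatenation of the two cited results with Proposition~\ref{prop:bounded_volume}.
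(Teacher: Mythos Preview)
Your proposal is correct and follows essentially the same route as the paper: one direction comes from the previous work~\cite{Chasse2021}, the reverse direction from Corollary~\ref{cor:hausdorff_to_hofer} (with Proposition~\ref{prop:bounded_volume} supplying the uniform volume bound on $\mathscr{L}^e_{\Lambda,\epsilon}(K)$), and the inequality $\hat{d}^{\mathscr{F},\mathscr{F'}}\leq d_H$ closes the loop. Your additional worries are harmless but unnecessary: since $L_0$ is assumed to lie in $\mathscr{L}^e_{\Lambda,\epsilon}(K)$ it is smooth and exact by definition, and Hausdorff convergence to a compact $L_0$ already confines the $L_i$'s to a compact neighborhood without invoking $\epsilon$-tameness.
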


We refer to the author's previous work for the precise definition of what a Chekanov-type metric is. Note however that the Lagrangian Hofer metric, the spectral metric, and all shadow metrics are of Chekanov type. Here, $\mathscr{L}^e_{\Lambda,\epsilon}(K)$ denotes the collection of all $\epsilon$-tame exact Lagrangian submanifolds $L$ of $M$ contained in $K$ and such that $||B_L||\leq\Lambda$. \par

\begin{pr*}
	By the author's previous work~\cite{Chasse2021}, every $\hat{d}^{\mathscr{F},\mathscr{F'}}$-converging sequence in $\mathscr{L}^e_{\Lambda,\epsilon}(M)$ also converges in the Hausdorff metric to the same limit. If we are given a volume bound $V>0$, then every Hausdorff-converging sequence in the associated subset of $\mathscr{L}^e_{\Lambda,\epsilon}(M)$ also converges in $d_H$ to the same limit by Theorem \ref{thm:eliashberg-gromov}. On $\mathscr{L}^e_{\Lambda,\epsilon}(K)$, we automatically a volume bound by Proposition \ref{prop:bounded_volume}. Since $\hat{d}^{\mathscr{F},\mathscr{F'}}\leq d_H$ by hypothesis, this gives the result. 
\end{pr*}

\begin{rem} \label{rem:equiv_topology}
	Note that Corollary \ref{cor:equiv_topology} implies that $d_H$ is bounded on $\mathscr{L}^e_{\Lambda,\epsilon}(D^*L)$. This is in stark contrast with the behavior of $d_H$ on $\mathscr{L}^e(D^*L)$, i.e.\ without any Riemannian bounds, where it is expected to be unbounded~\cite{Shelukhin2018}. Therefore, Hamiltonian diffeomorphisms which moves a Lagrangian submanifold a lot in the Lagrangian Hofer metric must also greatly deform it. \par
	
	Note that $\gamma$ is however expected to be bounded on $\mathscr{L}^e(D^*L)$~---~that is precisely the conjecture of Viterbo. It also follows from work of Biran and Cornea~\cite{BiranCornea2021} that some fragmentation metrics are bounded on $\mathscr{L}^e(D^*L)$. It may thus be that $\mathscr{L}^e_{\Lambda,\epsilon}(D^*L)$ better capture the topology in these metrics than in the Lagrangian Hofer metric.
\end{rem}

We now give the proof of Proposition \ref{prop:bounded_volume}; it relies mostly on the Bishop--Gromov inequalities. \par

\begin{proof}[Proof of Proposition \ref{prop:bounded_volume}]
	Note first that it suffices to bound the diameter of $N$. Indeed, the bound $\Lambda$ on the second fundamental form, together with Gauss' equation, gives an upper bound $\lambda=\lambda(\Lambda,K)\geq 0$ on the absolute value of the sectional curvature of $N$. Therefore, by the Bishop--Gromov inequality, we have that
	\begin{align*}
	\mathrm{Vol}(B^N_r(x))\leq \mathrm{Vol}(B^{M^k(-\lambda)}_r(x')),
	\end{align*}
	for any $x\in N$, any $x'\in M^k(-\lambda)$, and any $r>0$. Here, $M^k(-\lambda)$ is the $k$-dimensional simply-connected space of constant sectional curvature $-\lambda$. In particular, we get
	\begin{align*}
	\mathrm{Vol}(N)
	&\leq \mathrm{Vol}\left(B^{M^k(-\lambda)}_{\mathrm{Diam}(N)}(x')\right) = \frac{2\pi^{\frac{k}{2}}}{\Gamma(\frac{k}{2})}\int_0^{\mathrm{Diam}(N)}\left(\frac{\sinh(t\sqrt{\lambda})}{\sqrt{\lambda}}\right)^{k-1}dt.
	\end{align*}
	When $\lambda=0$, the quotient $\sinh(t\sqrt{\lambda})/\sqrt{\lambda}$ should be interpreted as being equal to $t$. Since $\sinh t$ (or $t$) is increasing and nonnegative, an upper bound on $\mathrm{Diam}(N)$ will thus indeed give an upper bound on $\mathrm{Vol}(N)$. \par
	
	We now bound the diameter of $N$. Note that by Shen's work~\cite{Shen1995}, there exists $r_0=r_0(\Lambda,K)>0$ such that the injectivity radius $r_\mathrm{inj}(N)$ of $N$ respects $r_\mathrm{inj}(N)\geq r_0$. Furthermore, since $N$ is closed and connected, there exist $x,y\in N$ such that $d(x,y)=\mathrm{Diam}(N)=:T$ and a unit-speed minimizing geodesic $\gamma$ of $N$ such that $\gamma(0)=x$ and $\gamma(T)=y$. Set $x_i:=\gamma(i)$ for $i\in\{0,1,\dots,\lfloor T\rfloor\}$. By the construction, we have that $d_N(x_i,x_j)\geq 1$ if $i\neq j$. Therefore, $d_M(x_i,x_j)\geq\epsilon$ by the tameness condition, i.e.\
	\begin{align*}
	\bigsqcup_{i=1}^{\lfloor T\rfloor}B^M_\epsilon(x_i) &\subseteq B^M_\epsilon(K),
	\intertext{and thus}
	\sum_{i=1}^{\lfloor T\rfloor}\mathrm{Vol}\left(B^M_\epsilon(x_i)\right) &\leq \mathrm{Vol}\left(B^M_\epsilon(K)\right).
	\end{align*}
	Taking $r:=\min\{r_0,\epsilon\}$, and using the other side of the volume comparison theorem, we get
	\begin{align*}
	\mathrm{Vol}\left(B^M_\epsilon(K)\right)
	&\geq \sum_{i=1}^{\lfloor T\rfloor}\mathrm{Vol}\left(B^M_\epsilon(x_i)\right) \\
	&\geq \sum_{i=1}^{\lfloor T\rfloor}\mathrm{Vol}\left(B^M_r(x_i)\right) \\
	&\geq \sum_{i=1}^{\lfloor T\rfloor}\mathrm{Vol}\left(B^{M^k(\lambda)}_r(p')\right) \\
	&= \frac{2\pi^{\frac{k}{2}}\lfloor T\rfloor}{\Gamma(\frac{k}{2})}\int_0^r\left(\frac{\sin(t\sqrt{\lambda})}{\sqrt{\lambda}}\right)^{k-1}dt.\\
	\end{align*}
	Since $\lfloor T\rfloor\geq T-1$, this does give an upper bound on $T=\mathrm{Diam}(N)$.
\end{proof}

\begin{rem} \label{rem:bound_volume}
	\hspace{2em}
	\begin{enumerate}[label=(\arabic*)]
		\item When $M$ is itself compact, then the dependence of the bound on $K$ may be replaced by a dependence on the volume of $M$ and on bounds on its sectional curvature and injectivity radius.
		\item The bound is sharp when $\lambda=0$, $r=\epsilon$ and $T$ is an integer, e.g.\ when $N$ is a curve or an isometrically embedded flat torus with integer diameter. 
		\item A slicker proof exists given the existence of a volume comparison theorem for tubes about submanifolds, as we would then simply have
		\begin{align*}
		\mathrm{Vol}\left(B^M_r(K)\right)\geq \mathrm{Vol}\left(B^M_r(N)\right)\geq C(r,\Lambda,\epsilon,k)\mathrm{Vol}(N)
		\end{align*}
		for any $r\in(0,s(C(N);N))$, where $C(N)$ is the cut locus of $N$. Indeed, as noted by Groman and Solomon~\cite{GromanSolomon2014}, the tameness condition allows us to estimate $s(C(N);N)$. However, such a comparison theorem in full generality seems beyond the reach of current technology, although some particular cases are known (cf.\ \cite{Gray2004}).
	\end{enumerate}
\end{rem}

\subsection{Limits in the absence of volume bounds} \label{subsec:rigidity_without_volume_bounds}
In this subsection we explain how a lot of the compactness results that we have presented in Subsection \ref{subsec:compactness} still holds when $V=\infty$, i.e.\ without the presence of volume bounds. In what follows, $\mathscr{I}'_k(\Lambda,\infty;z_0)$ will denote the space of smooth pointed immersions $f:(N,x_0)\looparrowright (M,z_0)$ such that $N$ is a~---~possibly noncompact~---~connected $k$-dimensional manifold without boundary, and $||B_f||\leq\Lambda$. Furthermore, if $N$ is noncompact and $g$ is the Riemannian metric of $M$, we then ask that $(N,f^*g)$ be complete. \par

The techniques that we will use are based on a pointed version of Shen's theorem, which also follows Shen's work. This is because Shen's theorem uses estimates valid on any complete Riemannian manifold and Kasue's version of Gromov's compactness theorem~\cite{Kasue1989}, of which there exists a pointed version. This is also a special case of Theorem~1.2 of \cite{Smith2007}. For ease of presentation, we give an explicit statement. \par

\begin{thm*}[\cite{Shen1995,Smith2007}]
	Let $\{f_i:(N_i,x_i)\looparrowright (M,z_0)\}_{i\geq 1}\subseteq \mathscr{I}'_k(\Lambda,\infty;z_0)$. Let $0<\alpha'<\alpha<1$. Then, we have the following:
	\begin{enumerate}[label=(\roman*)]
		\item a subsequence, still denoted $\{f_i\}$;
		\item a connected $k$-dimensional pointed smooth manifold without boundary $(N_0,x_0)$;
		\item a complete Riemannian metric $g_0$ on $N_0$ of class $C^{1,\alpha}_\mathrm{loc}$;
		\item an immersion $f_0:(N_0,x_0)\looparrowright (M,z_0)$ of class $C^{1,\alpha}_\mathrm{loc}$ with $f_0^*g=g_0$;
		\item for each $i$, a map $\phi_i:(N_0,x_0)\to (N_i,x_i)$ of class $C^{2,\alpha}_\mathrm{loc}$
	\end{enumerate}
	such that for all compact neighborhood $K$ of $z_0$,
	\begin{enumerate}[label=(\Roman*)]
		\item $\phi_i|_K$ is a diffeomorphism onto its image for $i$ large enough;
		\item $\{f_i\circ\phi_i|_K\}$ converges in the $C^{1,\alpha'}$ topology to $f_0|_K$.
	\end{enumerate}
\end{thm*}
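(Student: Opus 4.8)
The plan is to obtain this pointed statement by localizing the compact version of Shen's theorem stated above, exploiting the fact that every estimate involved is local in nature and therefore survives on any fixed compact region of $M$. Since the cited references already contain the result, the real content of the argument is to verify that the hypotheses match and to explain the passage from the global compact setting to the pointed one.

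First I would observe that the bound $||B_{f_i}||\leq\Lambda$, together with the Gauss equation and the bounded geometry of $M$ on compacta, yields a uniform two-sided bound on the sectional curvature of $(N_i,f_i^*g)$ over the preimage of any fixed compact $K\subseteq M$. Shen's estimates then furnish a uniform lower bound $r_0=r_0(\Lambda,K)>0$ on the injectivity radius at points mapping into $K$. These two facts are precisely the input required by a pointed Cheeger--Gromov-type compactness theorem (Kasue's pointed version of Gromov's compactness), and they hold uniformly in $i$ on each compact piece.

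Next I would fix an exhaustion $K_1\subseteq K_2\subseteq\cdots$ of $M$ by compact neighborhoods of $z_0$, apply the local compactness to each $f_i^{-1}(K_m)$, and extract a limit piece carrying a $C^{1,\alpha}_{\mathrm{loc}}$ metric together with $C^{1,\alpha'}$-convergence of the restricted immersions. The main obstacle — and the crux of the pointed statement — is to assemble these local limits into a single connected pointed manifold $(N_0,x_0)$ with globally defined comparison maps $\phi_i$. I would handle this with a diagonal argument over $m$, using uniqueness of the $C^{1,\alpha}$ limit to guarantee that the local limit manifolds and the local diffeomorphisms are compatible on overlaps, and then gluing. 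Crucially, the $\phi_i$ are required only to be diffeomorphisms \emph{onto their image} over each compact $K$, not global diffeomorphisms; this is exactly what the construction delivers, and it is the reason conclusions (I) and (II) are phrased relative to a compact neighborhood $K$ of $z_0$ rather than globally.

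Finally, completeness of $g_0$ follows from the completeness of the $f_i^*g$ together with the uniform injectivity radius bound, and the entire construction is subsumed by Theorem~1.2 of \cite{Smith2007}; so in practice I would either invoke that result directly or check line by line that its pointed Cheeger--Gromov framework for immersions of bounded second fundamental form applies verbatim in our setting.
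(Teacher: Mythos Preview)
Your proposal is correct and matches the paper's treatment: the paper does not prove this theorem but simply cites it, remarking that Shen's estimates are local on any complete Riemannian manifold and that Kasue's compactness theorem has a pointed version, and that the whole statement is a special case of Theorem~1.2 of \cite{Smith2007}. Your sketch fleshes out exactly this reasoning (Gauss equation for curvature bounds, Shen's local injectivity radius estimate, pointed Cheeger--Gromov via Kasue, diagonal over an exhaustion, or alternatively a direct appeal to Smith), so there is nothing to add.
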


Using this theorem, we can now prove the following generalization of some results of Subsection \ref{subsec:compactness}. \par

\begin{cor} \label{cor:compactness_without_volume_bounds}
	The results of Lemmata \ref{lem:compact_iso} and \ref{lem:compact_coiso} still holds, with $\overline{f_0(L_0)}=N$ instead, even if $V=\infty$ and the $L_i$'s are noncompact. The result of Lemma \ref{lem:compact_exact} also still holds, but $h_0$ is instead in $C^{1,\alpha}_\mathrm{loc}$.
\end{cor}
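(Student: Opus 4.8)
The plan is to re-run the three proofs of Subsection~\ref{subsec:compactness} with the compact form of Shen's theorem replaced by the pointed version stated just above, keeping track of the two places where compactness of the domain was genuinely used: the identification of the Hausdorff limit, and (in the exact case) the uniform oscillation bound on the primitives. First I would apply the pointed theorem to produce a subsequence, a pointed limit $(L_0,x_0)$, a complete $C^{1,\alpha}_\mathrm{loc}$ metric $g_0$, maps $\phi_i\colon(L_0,x_0)\to(L_i,x_i)$ that are diffeomorphisms onto their image over every compact set, and a $C^{1,\alpha}_\mathrm{loc}$-immersion $f_0\colon(L_0,x_0)\looparrowright(M,z_0)$ with $f_i\circ\phi_i\to f_0$ in $C^{1,\alpha'}$ on every compact subset of $L_0$. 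The isotropic and coisotropic conditions are pointwise conditions on $\sigma_i=(f_i\circ\phi_i)^*\omega$ (respectively on its restriction to the converging contact hyperplanes), and $C^{1,\alpha'}$-convergence on compacta yields $C^{0,\alpha'}$-convergence $\sigma_i\to f_0^*\omega$ there. Hence the arguments of Lemmata~\ref{lem:compact_iso} and~\ref{lem:compact_coiso}~---~vanishing of $f_0^*\omega$ in the isotropic case, lower semicontinuity of the kernel rank together with nondegeneracy of $\omega$ in the coisotropic case~---~go through point by point and produce an isotropic (resp.\ coisotropic) $f_0$.

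The identification of $N$ is where noncompactness truly enters, and it is the step I expect to be the main obstacle. The inclusion $\overline{f_0(L_0)}\subseteq N$ is easy: for $q\in L_0$ and $i$ large one has $f_i(\phi_i(q))\to f_0(q)$ with each $f_i(\phi_i(q))\in f_i(L_i)$, so $f_0(q)$ is a limit of points whose distance to $N$ tends to $0$, and $N$ is closed. The reverse inclusion is delicate because $f_0(L_0)$ need not be closed, and, more seriously, a point $p\in N$ may be approached by $f_i(a_i)\to p$ with the intrinsic distance $d_{L_i}(a_i,x_i)$ unbounded, so that $a_i$ escapes every region $\phi_i(K)$ seen from the basepoint $x_i$. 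When $d_{L_i}(a_i,x_i)$ stays bounded along a subsequence the matter is routine: since $(f_i\circ\phi_i)^*g\to g_0$, the maps $\phi_i$ are almost isometric on compacta, so $a_i=\phi_i(b_i)$ with $b_i$ confined to a fixed compact set, and passing to a limit $b_i\to b$ gives $p=f_0(b)$.

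To handle the escaping case I would exploit the uniform injectivity-radius and second-fundamental-form bounds, which turn $f_i$ near each $a_i$ into a uniformly controlled embedded graph, and re-run the pointed compactness along a countable dense subset of $N$, assembling the resulting local limits by a diagonal argument into a single connected $C^{1,\alpha}_\mathrm{loc}$-immersion whose image is dense in $N$. This bookkeeping~---~gluing the pieces into one connected domain while keeping the isotropic/coisotropic property~---~is the crux; it is also the reason the conclusion is stated with $\overline{f_0(L_0)}=N$ rather than $f_0(L_0)=N$, since a noncompact connected immersion can already have dense, non-closed image.

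Finally, for the exact case I would adapt the estimate of Lemma~\ref{lem:compact_exact} to an exhaustion by the balls $\bar{B}^{L_0}_R(x_0)$, which are compact because $g_0$ is complete. Normalizing $h_i(\phi_i(x_0))=0$ and writing $\bar h_i=h_i\circ\phi_i$, for $y\in\bar{B}^{L_0}_R(x_0)$ a minimizing $g_0$-geodesic from $x_0$ to $y$ stays inside the ball, so, letting $\|\cdot\|_R$ denote the supremum of the operator norm over $\bar{B}^{L_0}_R(x_0)$,
\begin{align*}
|\bar h_i(y)|\leq d_{L_0}(x_0,y)\,\|d\bar h_i\|_R\leq R\,\|d\bar h_i\|_R,
\end{align*}
and $\|d\bar h_i\|_R=\|(f_i\circ\phi_i)^*\lambda\|_R$ is uniformly bounded in $i$ because $f_i\circ\phi_i\to f_0$ in $C^{1,\alpha'}$ on that ball. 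Thus $\{\bar h_i\}$ is uniformly bounded in $C^{1,\alpha'}$ on each ball; the Arzela--Ascoli theorem on an exhausting sequence of balls, followed by a diagonal extraction, yields a $C^{1,\alpha}_\mathrm{loc}$-limit $h_0$, and passing to the limit in $d\bar h_i=(f_i\circ\phi_i)^*\lambda$ gives $f_0^*\lambda=dh_0$. The only substantive difference from the compact proof is that the single global bound $\mathrm{Diam}(L_0)\|d\bar h_i\|$ is replaced by the ball-wise bounds above, which is exactly why $h_0$ lands in $C^{1,\alpha}_\mathrm{loc}$ rather than $C^{1,\alpha}$.
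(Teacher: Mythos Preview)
Your outline is close in spirit to the paper's, but you have overlooked the one new ingredient the paper actually introduces for this corollary: Proposition~\ref{prop:sequence_ham}. The pointed version of Shen's theorem, as stated just above the corollary, requires that every $f_i$ send its basepoint $x_i$ to a \emph{common} point $z_0\in M$. If you merely choose $x_i\in L_i$ with $f_i(x_i)=y_i\to z_0$, you are not yet in a position to invoke it. The paper's fix is to manufacture, for each $i$, a compactly supported symplectomorphism (or contactomorphism) $\psi_i$ with $\psi_i(y_i)=z_0$ and $\psi_i\to\Id$ in $C^2$; the composite $f'_i:=\psi_i\circ f_i$ remains (co)isotropic or exact, now satisfies $f'_i(x_i)=z_0$, and by $C^2$-convergence the second-fundamental-form bound is only perturbed to some $\Lambda'\geq\Lambda$. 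This is precisely why Proposition~\ref{prop:sequence_ham} is stated and proved immediately before the corollary; without it your very first sentence (``apply the pointed theorem'') is not justified as written.

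On the inclusion $N\subseteq\overline{f_0(L_0)}$: your plan to re-run the pointed compactness over a countable dense subset of $N$ and then glue the local limits into one connected immersed domain is substantially more elaborate than what the paper does, and you yourself flag the gluing as unfinished bookkeeping. The paper instead applies the pointed theorem once, at a single $z_0\in N$ (after the alignment trick above), and declares that the earlier arguments ``go through as before,'' noting only that one obtains $\overline{f_0(L_0)}=N$ rather than $f_0(L_0)=N$ because $L_0$ may be noncompact. So your route is not wrong in principle, but it is not the paper's, and it trades the short symplecto/contacto-morphism trick for a diagonal-and-glue construction that you would still have to carry out. Your treatment of the exact case---replacing the global diameter bound by ball-wise bounds on an exhaustion, then Arzel\`a--Ascoli plus diagonal extraction to land in $C^{1,\alpha}_\mathrm{loc}$---is exactly what the paper intends.
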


For the proof, we will need a small technical construction, whose proof we leave to the end of this subsection. \par

\begin{prop} \label{prop:sequence_ham}
	Let $M$ be a symplectic manifold, and take a sequence $\{x_i\}\subseteq M$ converging to $x_0\in M$. Then, there exists a sequence of Hamiltonian diffeomorphisms with compact support $\{\psi_i\}$ converging in the $C^2$-topology to the identity and such that $\psi_i(x_i)=x_0$. There is an analogous result for contactomorphisms when $M$ is a contact manifold.
\end{prop}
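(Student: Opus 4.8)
The plan is to work in a Darboux chart and reduce the problem to realizing a vanishingly small translation by a compactly supported Hamiltonian flow. Fix a Darboux chart $\Phi\colon U\to\R^{2n}$ around $x_0$ with $\Phi(x_0)=0$ and $\Phi_*\omega=\omega_0$ the standard form, and set $v_i:=\Phi(x_i)$, so that $v_i\to 0$ since $x_i\to x_0$. On $\R^{2n}$ there is a (linear) Hamiltonian $H_i$ whose Hamiltonian vector field is the constant vector field $-v_i$; its time-one flow is then exactly the translation $z\mapsto z-v_i$. First I would fix, once and for all, a bump function $\beta$ that is identically $1$ on a ball $B$ centered at the origin and compactly supported in $\Phi(U)$, replace $H_i$ by $\beta\cdot H_i$, push it forward to $M$ by $\Phi$, and extend it by zero. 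Let $\psi_i$ be the time-one map of the resulting Hamiltonian flow.

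Then I would verify the three required properties. Compact support is immediate, since the generating Hamiltonian is supported in $U$. For the point condition, observe that on the region where $\beta\equiv 1$ the flow coincides with the translation by $-v_i$; since $|v_i|\to 0$, for $i$ large the entire segment from $v_i$ to $0$ lies in $B$, whence $\psi_i(x_i)=x_0$. Finally, the $C^k$-norm of the generating Hamiltonian is bounded by $C_k|v_i|$ (it is $\beta$ times a linear factor scaling linearly in $v_i$), so the standard Gronwall-type estimate for the dependence of a flow on its generating vector field gives $\psi_i\to\mathrm{id}$ in $C^\infty$, in particular in $C^2$.

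The contact case is where the main subtlety lies, since a generic constant translation in a contact Darboux chart is \emph{not} a contactomorphism. Using the contact Darboux theorem I would take coordinates $(z,x_1,y_1,\dots,x_n,y_n)$ with $\alpha=dz-\sum_j y_j\,dx_j$. Here the constant field $\partial_z$ (generated by the contact Hamiltonian $H\equiv 1$) and each $\partial_{x_j}$ (generated by $H=-y_j$) are strict contact vector fields, whereas $\partial_{y_j}$ is not; however, the contact Hamiltonian $H=x_j$ generates a flow which, along the locus $\{x_j=0\}$, moves purely in the $y_j$-direction while fixing $z$ and the other coordinates. I would therefore realize the displacement $\Phi(x_i)\to 0$ as a composition $\psi_i=\psi_i^{(3)}\circ\psi_i^{(2)}\circ\psi_i^{(1)}$ of three compactly supported contact isotopies, all generated by cutoffs (by a common $\beta\equiv 1$ on $B$) of the above linear Hamiltonians: $\psi_i^{(1)}$ cancels the $x$-coordinates via the strict translations, after which the point has vanishing $x$-coordinates; $\psi_i^{(2)}$ then cancels the $y$-coordinates using $H=x_j$ (legitimate precisely because $x_j=0$ now, so the trajectory stays on $\{x_j=0\}$); and $\psi_i^{(3)}$ cancels the residual $z$-coordinate by the Reeb flow. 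Each generating Hamiltonian has $C^k$-norm controlled by $|\Phi(x_i)|\to 0$, so each factor, and hence $\psi_i$, converges to the identity in $C^2$, is compactly supported, and satisfies $\psi_i(x_i)=x_0$. The one point requiring care — and the heart of the contact argument — is verifying that the intermediate flows keep the appropriate coordinates fixed so that the three steps genuinely compose to the desired displacement; this is checked directly from the explicit form of the contact Hamiltonian vector field of $\alpha$.
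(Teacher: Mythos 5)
Your argument is correct and follows essentially the same route as the paper: a cut-off linear Hamiltonian in a Darboux chart whose flow is a translation near the origin, with $\psi_i(x_i)=x_0$ secured by keeping the short trajectory inside the region where the cut-off is trivial, and $C^2$-convergence coming from the $O(|\Phi(x_i)|)$ scaling of the generating function. The only (cosmetic) difference is in the contact case, where the paper realizes the $(x,y)$-displacement by a single contact Hamiltonian $H(v)=\beta(v)\,d\alpha_0(v,u)$ and then corrects the residual $z$-coordinate by the cut-off Reeb flow, whereas you split the move into three steps ($x$, then $y$, then $z$); both versions rest on the same explicit computation of the contact Hamiltonian vector fields.
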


\begin{proof}[Proof of Corollary \ref{cor:compactness_without_volume_bounds}]
	Suppose that $\{f_i:L_i\looparrowright M\}$ is a sequence of isotropic or coisotropic immersions such that $\{f_i(L_i)\}$ Hausdorff-converges to $N$. Take $z_0\in N$. By Hausdorff-convergence, for each $i$, there is $y_i\in f_i(L_i)$ such that $\lim y_i=z_0$. By Proposition \ref{prop:sequence_ham}, there is a sequence of symplectomorphisms or contactomorphisms $\{\psi_i\}$ which $C^2$-converges to the identity and such that $\psi_i(y_i)=z_0$. \par
	
	We take $x_i\in f_i^{-1}(y_i)$ and $f'_i:=\psi_i\circ f_i$. Then, $C^2$-convergence insures that $\{f'_i:(L_i,x_i)\to (M,z_0)\}\subseteq\mathscr{I}'_k(\Lambda',\infty;z_0)$ for some $\Lambda'\geq\Lambda$. The rest of the proofs of the lemmata then goes through as before, except that we instead use the pointed version of Shen's theorem. This works because each point of $N_0$ is contained in \textit{some} compact neighborhood of $x_0$, and being (exact) (co)isotropic is a local condition about that point. We only get $\overline{f_0(L_0)}=N$, because $f_0(L_0)$ might not be closed if $L_0$ is noncompact. 
\end{proof}

\begin{rem} \label{rem:monotone_without_volume_bounds}
	The proof of Proposition \ref{prop:compact_monotone} relies in an essential way on the fact that there is a covering $L_i\to L_0$, i.e.\ Proposition \ref{prop:reparam}. The proof of that relies on applying Ehresmann's fibration theorem to $p=\iota^{-1}\circ f_0:L_0\to L$, which requires $p$ to be proper. When $L_0$ is compact, this is of course automatic, but not in the noncompact case. For example, we could modify the example in Remark \ref{rem:covering} to get
	\begin{center}
		\begin{tikzcd}[row sep=0pt,column sep=1pc]
			f_i\colon \T^2 \arrow{r} & \T^2\times\R^2=T^*\T^2 \\
			{\hphantom{\alpha\colon{}}} (\theta_1,\theta_2) \arrow[mapsto]{r} & {\left(i\theta_1,\theta_2,\frac{1}{i}\cos\theta_1,\frac{1}{i}\sin\theta_1\right)}.
		\end{tikzcd}
	\end{center}
	This gives in the limit a map $f_0:\R\times S^1\to T^*\T^2$ such that the corresponding $p$ is not proper. Note that $f_0$ is nonetheless a covering. We expect that to still be the case whenever the $L_i$'s are closed manifold; one can however easily make counterexamples when they are not.
\end{rem}

\begin{proof}[Proof of Proposition \ref{prop:sequence_ham}]
	We begin with the case where $M$ is symplectic. Let $\phi:U\to B^{2n}_\delta(0)\subseteq\R^{2n}$ be a Darboux chart centered at $x_0$. Take a rotation-invariant bump function $\beta:\R^{2n}\to [0,1]$ with support in $B^{2n}_\delta(0)$ and such that $\beta|_{B^{2n}_{\delta'}(0)}\equiv 1$ for some $\delta'\in (0,\delta)$. We fix $u\in\R^{2n}$ with $|u|=1$, and consider the Hamiltonian on $\R^{2n}$
	\begin{align*}
	H(v):=\beta(v)\omega_0(v,u),
	\end{align*}
	where $\omega_0$ is the standard symplectic form on $\R^{2n}$. The Hamiltonian isotopy $\{\psi_t\}$ that it generates is such that $\psi_t(v)=v-tu$ whenever $|v-su|<\delta'$ for all $s\leq t$. \par
	
	Note that for $i$ large enough, not only is $x_i$ in $U$, but also $\phi(x_i)\in B^{2n}_{\delta'}(0)$. Suppose that we have such $i$. Let $R_i$ be a unitary transformation sending $\phi(x_i)$ to $|\phi(x_i)|u$, and define a Hamiltonian $H_i$ on $M$ by
	\begin{align*}
	H_i(x):=\begin{cases}
	|\phi(x_i)|H(R_i\phi(x)) \quad &\text{if $x\in U$}; \\
	0 \quad &\text{otherwise}.
	\end{cases}
	\end{align*}
	Let $\{\psi^i_t\}$ be the Hamiltonian isotopy that it generates. A direct computation gives that $\phi(\psi^i_t(x))=R_i^{-1}\psi_{|\psi(x_i)|t}(R_i\phi(x))$ whenever $x\in U$. In particular, $\psi^i_1(x_i)=x_0$. Since $\psi_i:=\psi^i_1$ is the identity outside $U$, it also follows from this relation that the sequence $\{\psi_i\}$ converges to $\Id_M$ in $C^2$-topology. \par
	
	The case when $M$ is contact is quite similar. Indeed, we can still take a Darboux chart $\phi$ centered at $x_0$, and consider the contact Hamiltonian
	\begin{align*}
	H(v):=\beta(v)(d\alpha_0)(v,u),
	\end{align*}
	where $\beta$ is a bump function with support in $B^{2n+1}_\delta(0)$, $\alpha_0$ is the standard contact form on $\R^{2n+1}$, and $u\in \R^{2n}\times\{0\}$ is unitary. The contact isotopy $\{\psi^H_t\}$ is quite similar to what we had in the symplectic case: if we write $u=(x^0_i,y^0_i,0)_{1\leq i\leq n}$, then
	\begin{align*}
	\psi^H_t(x_i,y_i,z)=\left(x_i-tx^0_i,y_i-ty^0_i,z+\sum_i \left(\frac{x^0_iy^0_i}{2}t-y^0_ix_i\right)t\right)
	\end{align*}
	whenever $v=(x_i,y_i,z)\in B^{2n+1}_{\delta'}(0)$. Therefore, for any $z^0\in\R$, we get that
	\begin{align*}
	\left(\psi^{\beta}_{z_0+\frac{1}{2}\sum_i x^0_iy^0_i}\circ\psi^H_1\right)(x^0_i,y^0_i,z^0)=0
	\end{align*}
	if $(x^0_i,y^0_i,z^0)\in B^{2n+1}_{\delta'}(0)$. The rest of the argument is then completely analogous to the symplectic case.
\end{proof}

\begin{rem} \label{rem:sequence_ham_hofer}
	The construction in the symplectic case actually gives a sequence which also converges to the identity in the Hofer norm. Indeed, in the symplectic case, we have that
	\begin{align*}
	||\psi_i||_H
	&\leq \int_0^1\left(\max_{x\in M}H_i(x)-\min_{x\in M}H_i(x)\right)dt \\
	&= |\phi(x_i)|\left(\max_{|v|\leq\delta}H(v)-\min_{|v|\leq\delta}H(v)\right) \\
	&\leq 2\delta|\phi(x_i)|
	\end{align*}
	which of course tends to 0. \par
	
	Likewise, if $M$ is a contact manifold admitting a contact form $\alpha$, then the same argument implies that $\{\psi_i\}$ converges to the identity in the Shelukhin-Hofer norm\index{métrique de Shelukhin-Hofer!norme de Shelukhin-Hofer} associated to $\alpha$~\cite{Shelukhin2017}.
\end{rem}

\subsection{Limits in the absence of any Riemannian bounds} \label{subsec:rigidity_without_bounds}
We now prove Theorem~\ref{thm:equiv_limits}, which does show that there is some rigidity for sequences of Lagrangian submanifolds, even when no Riemannian bounds are put on such sequence. This proves that there exists some rigidity for the Hausdorff metric between Lagrangian submanifolds in full generality. \par

Seeing this from the other way around, this implies that $\hat{d}^{\mathscr{F},\mathscr{F'}}$-converging sequences either behave like those which are geometrically bounded~---~although they may not themselves be geometrically bounded (c.f.\ \cite{Chasse2021})~---~or they Hausdorff-converge to a fairly pathological space. This is thus a good step in the direction of a truly symplectic characterization of ``nicely-behaved sequences'' of Lagrangian submanifolds. \par

Furthermore, this indicates that it makes sense to talk of $C^0$-Lagrangian submanifolds as $n$-dimensional topological submanifolds $L$ of $M$ which are the Hausdorff limit of a sequence of smooth Lagrangian submanifolds $\{L_i\}$ such that the sequence $\{L_i\}$ is also Cauchy in a nice Chekanov-type metric $\hat{d}^{\mathscr{F},\mathscr{F'}}$. Note that a sequence of maps $\{\psi_i\}$ $C^0$-converge to a map $\psi$ if and only if the sequence of graphs $\{\operatorname{graph}\psi_i\}$ Hausdorff-converges¸ to $\operatorname{graph}\psi$~\cite{Waterhouse1976}. Therefore, graphs of hameomorphisms~\cite{OhMuller2007} are $C^0$-Lagrangians in our sense for $\hat{d}^{\mathscr{F},\mathscr{F'}}=d_H$. However, Oh and M\"uller's definition is \textit{a priori} stronger than ours in the sense that there may be Lagrangian graphs of homeomorphisms which are not hameomorphisms. Instead, they would be graphs of homeomorphisms obtained as limits of Hamiltonian diffeomorphisms in what they call the weak Hamiltonian topology. On top of that, there could be graphs of $C^0$-limits of non-Hamiltonian symplectomorphisms when $\hat{d}^{\mathscr{F},\mathscr{F'}}$ can compare non-Hamiltonian diffeomorphic Lagrangian submanifolds. \par

Likewise, it is unclear how this definition of $C^0$-Lagrangian submanifolds relate to the definition of Humili\`ere, Leclercq, and Seyfaddini~\cite{HumiliereLeclercqSeyfaddini2015}. Indeed, our definition is global whilst theirs is local, which makes a direct comparison hard. Likewise, yet another definition of $C^0$-Lagrangian submanifold as been proposed very recently by Viterbo~\cite{Viterbo2022ii} using sheaves. \par

Finally, it was pointed out to us by Seyfaddini that this can be seen as a generalization of results of Hofer~\cite{Hofer1990} and Viterbo~\cite{Viterbo1992} saying that if a sequence of Hamiltonian diffeomorphisms $\{\psi_i\}$ is such that
\begin{enumerate}[label=(\arabic*)]
	\item $\psi_i\xrightarrow{C^0} \phi$;
	\item $\psi_i\xrightarrow{||\cdot||_H} \psi$ or $\psi_i\xrightarrow{\gamma} \psi$,
\end{enumerate}
then $\psi=\phi$. As noted before, (1) here is equivalent to (1) in Theorem \ref{thm:equiv_limits} with $L_i=\operatorname{graph}\psi_i$. However, the two versions of (2) are \textit{a priori} entirely independent. Indeed, the Hofer and spectral norms on Hamiltonian diffeomorphisms have notoriously different behavior than the Hofer and spectral norms on their graph (c.f.\ \cite{Ostrover2003}). These differences are however on the large scale geometry of the metrics; this indicates that the local behavior are similar. \par

The proof of Theorem~\ref{thm:equiv_limits} follows from the following lemma, which follows directly from the proof of Theorem~1 in the author's previous work~\cite{Chasse2021}. \par

\begin{lem} \label{lem:half_ineq}
	Let $\hat{d}^{\mathscr{F},\mathscr{F'}}$ be a Chekanov-type metric on a collection of Lagrangian submanifolds $\mathscr{L}^\star(M)$ (c.f.\ \cite{Chasse2021}). For every $L\in \mathscr{L}^\star(M)$, there exists $\delta>0$ and $C>0$ such that
	\begin{align*}
		Cs\left(L;L'\cup\left[\left(\bigcup_{F\in\mathscr{F}} F\right)\cap \left(\bigcup_{F'\in\mathscr{F'}} F'\right)\right]\right)\leq \hat{d}^{\mathscr{F},\mathscr{F'}}(L,L')
	\end{align*}
	for all $L'\in\mathscr{L}^\star(M)$ such that $\hat{d}^{\mathscr{F},\mathscr{F'}}(L,L')<\delta$, where we recall that
	\begin{align*}
		s(A;B):=\sup_{x\in A}d_M(x,B):=\sup_{x\in A}\inf_{y\in B}d_M(x,y).
	\end{align*}
	for any subsets $A,B\subseteq M$. In particular, if $\hat{d}^{\mathscr{F},\mathscr{F'}}=d_H$ or $\gamma$, we have that
	\begin{align*}
		Cs\left(L;L'\right)\leq \hat{d}^{\mathscr{F},\mathscr{F'}}(L,L').
	\end{align*}
\end{lem}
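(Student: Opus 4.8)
The goal is to establish a lower bound on a Chekanov-type metric $\hat{d}^{\mathscr{F},\mathscr{F'}}$ by the one-sided Hausdorff quantity $s(L;L'\cup[\cdots])$, valid in a small neighborhood of a fixed $L$. The statement says this follows ``directly from the proof of Theorem~1 in \cite{Chasse2021}'', so the plan is to reconstruct that argument in the form needed here. The key input is the defining property of a Chekanov-type metric recalled earlier in the excerpt: for any compatible $J$, any $L'\in\mathscr{L}^\star(M)$, and any point $x\in L\cup L'$, there is (up to small Hamiltonian perturbation) a $J$-holomorphic curve $u$ passing through $x$, with boundary on $L$, $L'$, and elements of $\mathscr{F}\cup\mathscr{F'}$, such that $\hat{d}^{\mathscr{F},\mathscr{F'}}(L,L')\leq\omega(u)$.

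The plan is as follows. First I would fix $L$ and choose a small $\delta>0$, together with an auxiliary geometric constant, using a monotonicity/isoperimetric inequality for $J$-holomorphic curves: there is a constant $C=C(L,J)>0$ and a radius such that any $J$-holomorphic curve passing through a point $x$ at distance $r=s(L;L'\cup[\cdots])$ from the set $L'\cup[\cdots]$ must have symplectic area at least $Cr$ as long as $r<\delta$. Concretely, choose $x\in L$ realizing (or nearly realizing) the supremum defining $s(L;L'\cup[\cdots])$, so that the ball $B_r(x)$ of radius $r=s$ around $x$ meets none of $L'$ nor the common intersection of the $\mathscr{F}$- and $\mathscr{F'}$-families. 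The holomorphic curve $u$ supplied by the Chekanov-type property passes through $x$ and has its entire boundary on $L'\cup[\cdots]$ together with $L$; hence the portion of $u$ inside $B_r(x)$ has boundary only along $L$, and monotonicity for $J$-holomorphic curves with boundary on a single (totally real / Lagrangian) submanifold gives $\operatorname{Area}(u\cap B_r(x))\geq C r^2/r=Cr$ — more precisely, the standard linear isoperimetric estimate yields a lower bound proportional to $r$ after accounting for the fixed geometry near $L$.

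The second step is to convert this area bound into the inequality. Since $\hat{d}^{\mathscr{F},\mathscr{F'}}(L,L')\leq\omega(u)$ and $\omega(u)\geq\operatorname{Area}(u\cap B_r(x))\geq Cr = C\,s(L;L'\cup[\cdots])$ for $J$ compatible (so that $\omega$ computes area on $J$-holomorphic curves), rearranging gives exactly
\begin{align*}
Cs\left(L;L'\cup\left[\left(\bigcup_{F\in\mathscr{F}} F\right)\cap \left(\bigcup_{F'\in\mathscr{F'}} F'\right)\right]\right)\leq \hat{d}^{\mathscr{F},\mathscr{F'}}(L,L').
\end{align*}
The restriction $\hat{d}^{\mathscr{F},\mathscr{F'}}(L,L')<\delta$ is what guarantees $r<\delta$ (so that $L'$ is genuinely close to $L$ and the monotonicity constant is uniform), and the small Hamiltonian perturbation in the definition is absorbed by shrinking $\delta$ and passing to a limit. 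The final sentence, specializing to $\hat{d}^{\mathscr{F},\mathscr{F'}}=d_H$ or $\gamma$, is immediate because for these two metrics $\mathscr{F}=\mathscr{F'}=\emptyset$, so the bracketed set is empty and the bound reads $Cs(L;L')\leq\hat{d}^{\mathscr{F},\mathscr{F'}}(L,L')$.

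The main obstacle, and the step I would be most careful about, is the monotonicity estimate and its uniformity: I must ensure that the constant $C$ depends only on $L$ (and the fixed $J$) and not on $L'$, which is why the neighborhood constraint $\hat{d}^{\mathscr{F},\mathscr{F'}}(L,L')<\delta$ is essential — it keeps $L'$ in a controlled region where the geometry near $L$ governs all the relevant curves. A secondary subtlety is justifying that the relevant piece of $u$ inside the small ball has boundary only on $L$ (not on $L'$ or the families), which is exactly what the choice of $x$ as a point realizing the one-sided distance $s$ provides. Since all of this is the content of the proof of Theorem~1 in \cite{Chasse2021}, I would cite that argument for the quantitative monotonicity and simply adapt the bookkeeping to the present formulation.
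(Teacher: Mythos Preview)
Your approach---produce a $J$-holomorphic curve through a point $x\in L$ realizing the one-sided distance $s$, then apply Sikorav's monotonicity in the ball $B_r(x)$ where the only boundary component present is $L$---is precisely what the paper intends: it offers no self-contained proof, citing instead Theorem~1 of \cite{Chasse2021} and noting in Remark~\ref{rem:equiv_limits} that ordinary Sikorav monotonicity on a small ball centred at a point of $L$ already suffices here, since $C$ is allowed to depend on $L$.

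That said, the chain of inequalities as you have written it does not close. You take the Chekanov-type property from the paper's summary in the form $\hat d^{\mathscr{F},\mathscr{F'}}(L,L')\leq\omega(u)$ and pair it with $\omega(u)\geq Cr$; together these say nothing about a \emph{lower} bound on $\hat d$. What one actually needs (and what is established in \cite{Chasse2021}) is the opposite direction: through any $x$ there is a curve $u$ with $\omega(u)\leq\hat d^{\mathscr{F},\mathscr{F'}}(L,L')+\epsilon$. With that in hand, $\hat d+\epsilon\geq\omega(u)\geq cr^2$, and letting $\epsilon\to 0$ gives the bound. The paper's one-line recollection of the Chekanov-type property appears to have this inequality reversed, and you have reproduced it verbatim; the logic must be repaired before the argument is valid. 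A smaller point: Sikorav's lemma yields a quadratic lower bound $cr^2$, not $cr$; the manipulation ``$Cr^2/r=Cr$'' and the appeal to a ``linear isoperimetric estimate'' are not how the estimate works. The inequality one actually obtains is $c\,s^2\leq\hat d$, which is also the form in Theorem~1 of \cite{Chasse2021}; this is harmless for the application in Theorem~\ref{thm:equiv_limits}, where only $s\to 0$ as $\hat d\to 0$ is used.
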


\begin{proof}[Proof of Theorem \ref{thm:equiv_limits}]
	First of all, note that $N$ is connected since all $L_i$ are. By Lemma~\ref{lem:half_ineq}, we have a constant $C=C(M,L_0)>0$ such that
	\begin{align*}
	Cs\left(L_0;L_i\cup\left[\left(\bigcup_{F\in\mathscr{F}} F\right)\cap \left(\bigcup_{F'\in\mathscr{F'}} F'\right)\right]\right)\leq \hat{d}^{\mathscr{F},\mathscr{F'}}(L_0,L_i)  \tag{$\star$}
	\end{align*}
	for $i$ large. Therefore, $L_0$ is contained in $N\cup((\cup F)\cap (\cup F'))$. Since $(\cup F)\cap (\cup F')$ is totally disconnected, $L_0$ must thus be contained in $N$. Since $L_0$ is compact, it is a closed subset of $N$. But since $L_0$ and $N$ are both submanifolds of $M$, we must have $m=n$, and $L_0$ must be an open subset of $N$. Therefore, by connectedness, $N=L_0$.
\end{proof}

\begin{rem} \label{rem:equiv_limits}
	Note that the technical upgrade of Sikorav's version of the monotonicty lemma~\cite{Sikorav1994} proven in the paper cited above is not necessary here. Since the constant $C$ only needs to depend on $L_0$~---~and not on metric bounds of $L_0$~---~Lemma~\ref{lem:half_ineq} also follows from applying Sikorav's monotonicity lemma on a small-enough metric ball centered at a point of $L_0$.
\end{rem}

\clearpage
\bibliographystyle{alpha}
\bibliography{sequences}

\textsc{Department of Mathematics, ETH Z\"urich HG, R\"amistrasse 101, 8092 Zurich, Switzerland} \\
\textit{E-mail}: \href{mailto:jeanphilippe.chasse@math.ethz.ch}{jeanphilippe.chasse@math.ethz.ch}
\end{document}